\pgfplotsset{compat = newest}
\numberwithin{equation}{section}
\def\today{\number\day\space\ifcase\month\or
 January\or February\or
   March\or April\or May\or June\or
    July\or August\or September\or
   October\or November\or December\fi\
     \number\year}
\theoremstyle{definition}
\newtheorem{thm}{Theorem}[section]
\newtheorem{lem}[thm]{Lemma}
\newtheorem{prp}[thm]{Proposition}
\newtheorem{dfn}[thm]{Definition}
\newtheorem{cor}[thm]{Corollary}
\newtheorem{rmk}[thm]{Remark}
\newtheorem{ntn}[thm]{Notation}
\newtheorem{exa}[thm]{Example}
\newcommand{\beq}{\begin{equation}}
\newcommand{\eeq}{\end{equation}}
\newcommand{\beqr}{\begin{eqnarray*}}
\newcommand{\eeqr}{\end{eqnarray*}}
\newcommand{\bal}{\begin{align*}}
\newcommand{\eal}{\end{align*}}
\newcommand{\bei}{\begin{itemize}}
\newcommand{\eei}{\end{itemize}}
\newcommand{\limi}[1]{\lim_{{#1} \to \infty}}
\newcommand{\dt}{\delta}
\newcommand{\ep}{\varepsilon}
\newcommand{\ph}{\varphi}
\newcommand{\N}{{\mathbb{Z}}_{> 0}}
\newcommand{\Ntwo}{{\mathbb{Z}}_{\geq 2}}
\renewcommand{\Im}{{\operatorname{Im}}}
\newcommand{\Cu}{{\operatorname{Cu}}}
\newcommand{\Full}{{\operatorname{Full}}}
\newcommand{\Aut}{{\operatorname{Aut}}}
\newcommand{\QT}{{\operatorname{QT}}}
\newcommand{\T}{{\operatorname{T}}}
\newcommand{\W}{{\operatorname{W}}}
\newcommand{\Rrc}{{\operatorname{Rrc}}}
\newcommand{\rc}{{\operatorname{rc}}}
\newcommand{\Idl}{{\operatorname{Idl}}}
\newcommand{\Span}{{\operatorname{Span}}}
\newcommand{\Irc}{{\operatorname{Irc}}}
\newcommand{\Fu}{{\operatorname{Full}}}
\newcommand{\cF}{{\mathcal{F}}}
\newcommand{\cH}{{\mathcal{H}}}
\newcommand{\cK}{{\mathcal{K}}}
\newcommand{\cM}{{\mathcal{M}}}
\newcommand{\cR}{{\mathcal{R}}}
\newcommand{\cZ}{{\mathcal{Z}}}
\newcommand{\ca}{C*-algebra}
\newcommand{\hm}{homomorphism}
\title[Rank Ratio Function]{
The Rank Ratio Function and The  Radius of Comparison}  
\author{M. Ali Asadi-Vasfi}
\curraddr{Department of Mathematics, University of Toronto, Toronto, Ontario, M5S~2E4,  Canada.}
\date{\today}
\subjclass[2010]{Primary 46L05;
 Secondary 06F05; 46L55; 19K14; 46L80.}
\keywords{Cuntz semigroups, C*-algebras, rank ratio function, relative radius of comparison. }
\begin{document}
\maketitle
\begin{abstract}
For any given Cuntz semigroup, 
we introduce a  function associated to it,  called the \enquote{Rank Ratio Function}. This function  ensures  global control 
over the oscillation of the rank of any pair of elements in the Cuntz semigroup and, also, plays a very important role 
 in computing  the relative radii of comparison  of the Cuntz semigroups. 
 Along the way, as we study both
  the C*-algebraic and algebraic  aspects of the relative radius of comparison with a deeper analysis, we 
  introduce another function called the ``Radius of Comparison Function" which  provides a method for
manufacturing non-classifiable C*-algebras with different relative radii of comparison. 
\end{abstract}
%
%
\section{Introduction}
Let $S$ be a Cuntz semigroup. 
We introduce a function $\rho \colon (S, S) \to [0, \infty]$, 
called the \emph{Rank Ratio Function}, and  
establish its key connections to the relative radii of comparison of $S$ and continuity of the rank of a full element in the Cuntz semigroup  of a unital stably finite C*-algebra. 
Let us flesh it out.    
Let us denote by $\cF(S)$ the set of all functionals on $S$. For $w \in S$, we let denote by $\cF_w(S)$ the set all functionals on $S$ which are normalized at $w$  
 (see Definition~\ref{FunctionalDFn}).
For any $x \in S$, the rank of $x$, denoted by 
$\widehat{x}$, is defined as the map $\widehat{x} \colon \cF(S) \to [0, \infty]$ given by $\widehat{x} (\lambda) = \lambda (x)$ for  $\lambda \in \cF(S)$.
For any pair of elements $x, y \in S$, the rank ratio function
 at $(x, y)$ gives us the smallest $r \in (0, \infty)$
 such $\widehat{x} \leq r \cdot \widehat{y}$ (see Definition~\ref{rk_rDef}).   In 
Lemma~\ref{RkRatioProperty}
and Lemma~\ref{IncDesrho},
we study the most important  facts about rank ratio functions, including its homogeneous mixed-monotonic behavior. Let us denote by $\Fu(S)$ the set of all full elements in $S$. 
We prove, more generally,  in Theorem~\ref{LimofRank}
that
if $(x_n)_{n \in \N}$ is an increasing sequence in $S$ with $\sup_n x_n =x$ for $x \in S$. Then:
\begin{enumerate}
\item
The sequence  
$(\rho(x_n, x))_{n=1}^{\infty}$ in an increasing sequence in $\mathbb{R}$, bounded above by 1. So, $\lim_{n \to \infty} \rho(x_n, x)$ exists and is in $\mathbb{R}$.
If further $x_n \in \Full (S)$ for all $n \in \N$ and $\cF_x(S)\neq \emptyset$, then 
$0<\lim_{n \to \infty} \rho(x_n, x)$.
\item 
The sequence  
$(\rho(x, x_n))_{n=1}^{\infty}$ in an decreasing  sequence in $[0, \infty]$. 
If further $x_n \in \Full (S)$ for all $n \in \N$ and $\cF_x(S)\neq \emptyset$, then 
$\lim_{n \to \infty} \rho(x, x_n) \geq 1$. 
\end{enumerate}
We also show that
for any $x, y \in \Fu (S)$, we have 
$\rho(y, x)= 0$ if and only if 
$\cF_{x} (S)= \emptyset$
(see Lemma~\ref{Rkiszero}).

The normalized version of the rank ratio function is given in Definition~\ref{rk_r_0Def}. The idea is to use normalized functionals at a given full element in $S$,  instead of all functionals on $S$.  
Let $z \in \Fu(S)$ with $\cF_z(S) \neq \emptyset$. For for any $x, y \in S$ with $x, y \ll \infty\cdot z$, we denote by $\rho_z (x, y)$ the normalized rank ratio at $(x, y)$. 
We prove in Proposition~\ref{Prp.rhoequiavalent}(\ref{Prp.rhoequiavalent.c}) that  if $A$ is a unital residually stably finite, then, for any pair of full elements $a, b \in \cup_{n=1}^{\infty}M_n (A)_+$, 
\[
\rho ([a], [b]) = \rho_z ([a], [b]) = \sup_{\lambda \in \cF_{[1_A]} (\Cu(A)) } \left( \frac{\lambda ([a])}{\lambda ([b])} \right) < \infty.
\]

The main importance of the rank ratio function is that it can establish a bridge between the relative radii of comparison, which are based on the Cunntz semigroups, and 
the oscillation structure of the rank of operators which are closely connected to their continuity. Let us explain. 
  For any element $w \in \Fu(S)$, 
  we denote by $\rc (S, w)$ the radius of comparison of $S$ relative to $w$ (see Definition~\ref{DefRcCuAlgebraic}).  
  If $x, y \in \Fu(S)$ with $x\leq y$, we know that 
  $\rc (S, y) \leq \rc(S, x)$ (see Proposition~\ref{rcProperties}(\ref{rcProperties.a_1})). 
  One may ask whether there exits a better lower bound for $\rc (S, x)$, together with an appropriate upper bound. 
In theorem~\ref{Rkrc}, we prove that
if $x, y$ are full element  in a $\Cu$-semigroup $S$ with $x\leq y$,   further $\cF_y (S) \neq \emptyset$, and 
 $\rho(y, x)< \infty$, 
then
\[
\frac{1}{\rho(x, y)} \cdot \rc (S, y) \leq \rc(S, x)
\leq 
\rho(y, x) \cdot \rc(S, y). 
\]
See Lemma~\ref{RkRation0} for the case that  $\cF_y (S) = \emptyset$ and $\rho(y, x)=0$. 
More generally, we prove in Theorem~\ref{MainThmForRCLim}
that if $y$ and a sequence $(y_n)_{n=1}^{\infty}$ in $\Full(S)$ with $y_n \leq y_{n +1}\leq y$ for all $n \in \N$,  further $\cF_y (S) \neq \emptyset$, and
$\lim_{n \to \infty} \rho(y, y_n) < \infty$,  
then 
\[
\frac{1}{\lim_{n \to \infty} \rho(y_n, y)} \cdot  \rc (S, y) \leq
 \lim_{n \to \infty} \rc(S, y_n)
\leq 
\lim_{n \to \infty} \rho(y, y_n) \cdot \rc(S, y). 
\]
Therefore, if $\lim_{n \to \infty} \rho(y, y_n) =1$, then
$
 \lim_{n \to \infty} \rc(S, y_n)
= \rc(S, y)$. 
(see Corollary~\ref{rclimrho1})

Additionally, for every
 $\Cu$-semigroup $S$ with $\Full (S)\neq \emptyset$,  
 we define the
\emph{radius of comparison function}, denoted by 
$\rc(S, \cdot)$, from  $\Fu (S)$  to the interval  
$[0, \infty]$, given by $x \mapsto \rc (S, x)$. 
In Proposition~\ref{IrcProperties}, 
we prove some important basic properties of this function. Namely, 
we show that 
the reciprocal of $\rc (S, \cdot)$  is homogeneous  and order preserving. 
This is the reason why we should also consider the reciprocal of $\rc (S, \cdot)$.  
 
As for the range of the radius of comparison function, we show that, for  a unital simple classifiable C*-algebra $A$, 
 the range of $\rc(\Cu (A), \cdot)$ is $\{0\}$
 (see Lemma~\ref{IrcClassi} and Example~\ref{Exa.PurelyInfinite}(\ref{Exa.PurelyInfinite.d})).   
Also,   
this function plays an important role in detecting 
  nonisomorphic, nonclassifiable C*-algebras.
  For nonclassifiable C*-algebra $A$ with stable rank one,  
   our approach leverages
a positive solution to the \emph{rank problem} for C*-algebras with stable rank one \cite{APRT22, Thi20}
and our result here 
to
generate any arbitrary values in the interval $[0, \infty)$ for the relative radii of comparison of $\Cu(A)$ 
 More precisely, 
 we  show in Theorem~\ref{ThmManufactureRc} that
 if $A$ is unital separable residually stably finite C*-algebra $A$ with stable rank one, with no nonzero, finite dimensional quotients, and with $0<\rc (\Cu(A), [1_A])<\infty$, then
the range of $\rc (\Cu (A), \cdot)$ is $[0, \infty)$. 
Applying this and  
 Corollary~5.21 in \cite{ASV23}, we prove in
 Theorem~\ref{NonIsoClassofC*} that, 
for every finite group $G$, there exist
an uncountable family of pairwise nonisomorphic 
unital simple separable Villadsen algebras
 $\{A_{\gamma} 
 \}_{\gamma \in \Lambda}$
 and outter group actions 
$\{\alpha_{\gamma} \colon  G \to A_{\gamma} \}_{{\gamma \in \Lambda}}$,
where $\Lambda$ is an uncountable subset of $(0, \infty)$, 
such that all $A_{\gamma}$ have stable rank one and 
 both the rang of 
 $\rc(\Cu (A_{\gamma}),\cdot)$ and the range 
 $\rc( \Cu( A_{\gamma} \rtimes_{\alpha_{\gamma}} G),  \cdot)$
 are $[0, \infty)$
 for all $\gamma$. 
 Also,  Theorem~\ref{ThmManufactureRc}
 result in a strengthened version of Theorem~3.11 in 
\cite{HP25}  (see Proposition~\ref{IrcNonClassi}). 

Along the way, 
we study some important structural features 
 of
the relative radius of comparison 
in both C*-algebraic and algebraic setting.
In particular,  
 for a unital residually stably finite C*-algebra $A$, we give an appropriate definition of  the radius of comparison of $A$ relative to a full positive element $a$ in $\cup_{n=1}^{\infty} M_n (A)$, denote by $\Rrc(A, a)$. We further prove in Theorem~\ref{CorRrc=rc} that the radius of $\rc (\Cu (A), [a]) = \Rrc (A, a)$ for this class of C*-algebras. 
 The radius of comparison serves as a numerical invariant, extending the concept of the covering dimension to noncommutative. 
This notion originally comes from the Murray-von Neumann comparison theory for factors and,  
in the setting of C*-algebras,  this is just Blackadar’s ``Fundamental Comparability Property" for positive elements (see \cite{B10}).  Roughly speaking, 
for unital stably finite C*-algebra $A$, 
the radius of comparison  of $A$ relative to $1_A$ measures the failure of strict comparison in
the Cuntz semigroup of $A$. 
This was defined in Definition~6.1 in \cite{Tom06} for unital stably finite C*-algebras to 
 construct non-isomorphic simple AH algebras with the same Elliott invariant. 
 While Definition~6.1 in \cite{Tom06} is effective  for simple C*-algebras, 
  it is more appropriate to consider the class of unital residually stably finite C*-algebra when going beyond the setting of nonsimple C*-algebras (see Lemma~\ref{LemFullRrc}).  
  This quantity is often zero  for 
  $\cZ$-stable C*-algebras,  some of which are known as classifiable ones. 
   For
  non-$\cZ$-stable cases, it is strictly  greater than zero and has not been studied extensively. 
Nonetheless, there are some works which help a better understanding such C*-algebras and provide partial results on classification of such C*-algebras and their crossed products (see \cite{ASV23,  AHS25, AE25, H24, Ni22}). 
  Further, the definition of the
radius of comparison was extended  
to a $\Cu$-semigroup $S$ in Definition~ 3.3.2 of \cite{BRTW12}, using  algebraic
properties of $S$.
   The idea is to use
   a full element $x$ in $S$ instead of the Cuntz class of the unit of $A$ and  use the 
   the all the functionals on $S$ instead of normalized quasitraces. 
     This version the radius of comparison 
  is called  ``\emph{the radius of comparison of $S$  relative to $x$}'', or simply \emph{the relative radius of comparison of } $S$ (see Definition~\ref{DefRcCuAlgebraic}). 
  In the setting of C*-algebras, the relative radius of comparison is closely connected to the Corona Factorization Property, introduced  in \cite{KN06}. 
  In fact, if 
  the  relative 
   radius of comparison of the Cuntz semigroup of a C*-algebra is finite, then 
   the C*-algebra has
   the Corona Factorization Property. (See  Theorem~4.2.1 in \cite{BRTW12}.)

As for the connection between rank ration function and continuity of the rank of an element,
we prove in Theorem~\ref{ContRank} that 
if  $A$ is  a unital stably finite C*-algebra and $a\in \cup_{n=1}^{\infty} M_n(A)_+$  is full, then 
$\lim_{n \to \infty} \rho_{[1_A]} \left([a], [(a-\tfrac{1}{n})_+]\right)=1$
if and only if
 $\widehat{[a]}$ is continuous on 
$\cF_{[1_A]} (\Cu (A))$. 
This result has a point of contact with 
oscillation of a positive element in a C*-algebra, which was originally defined in Section A.1 of \cite{ElGONi20}
(see Theorem~\ref{ContRank}).

Finally, for a better understanding of our results, in Section~\ref{SectionExam}, we exhibit  many interesting  examples in both C*-algebra and more algebraic settings that demonstrate how  our theorems can be applied. 
%
\subsection*{Acknowledgments}
The author is indebted to Hannes Thiel for
the several discussions they had over the course of writing this paper. 
The author would like to thank 
George Elliott, Ilan Hirshberg, Feodor Kogan,
Anshu Nirbhay, Zhuang Niu, 
  N. Christopher Phillips, and Andrew S. Toms for their comments. 
The author is also very grateful for the support and hospitality provided
by the Fields Institute, with special thanks to Miriam Schoeman.
\section{Preliminaries }\label{Sec_Prelim}
%
In this  section, we  set up some notation and
collect briefly 
some information on 
the Cuntz semigroups, functionals on them, and their full elements for easy reference.
\begin{ntn}
Throughout, 
if $A$ is a \ca, we write $A_{+}$ for the
set of positive elements of $A$.
 We let $\cK$ denote the algebra of compact operators on a separable Hilbert space $\cH$.
\end{ntn}

We recall the definition of the Cuntz semigroups,  originally from~\cite{Cun78},  and some other basic facts about them. 
\subsection{Cuntz subequivalence}\label{Sec_CuSub}
Let $A$ be a \ca.
\begin{enumerate}
\item\label{Cuntz_def_property_a}
For $a, b \in (A \otimes \cK)_{+}$,
we say that $a$ is {\emph{Cuntz subequivalent to~$b$ in~$A$}},
written $a \precsim b$,
if there exists a sequence $(w_n)_{n = 1}^{\infty}$ in $A \otimes \cK$
such that
$
\limi{n} w_n b w_n^* = a$.
We say that $a$ and $b$ are {\emph{Cuntz equivalent in~$A$}},
written $a \sim_{A} b$,
if $a \precsim b$ and $b \precsim a$.
This relation is an equivalence relation,
and we write $ [a]_A$, or simply $[a]$ when unambiguous, for the equivalence class of~$a$.
We define the positively ordered monoid
$
\Cu (A) = (A \otimes \cK)_{+} / \sim$,
together with the commutative semigroup operation
$[a]_A +  [b]_A
 = [a \oplus b]_A$ coming from 
  an isomorphism $M_2(\cK) \to \cK$
and the partial order
$[a]_A \leq [b]_A$
if $a \precsim b$.
We write $0$ for~$[0]_A$.
\item\label{Cuntz_def_property_e}
Let $A$ and $B$ be C*-algebras
and let $\ph \colon A \to B$ be a \hm.
We use the same letter for the induced maps
$M_n (A) \to M_n (B)$
for $n \in \N$ and
$A \otimes \cK \to B \otimes \cK$.
We define
 $\Cu (\ph) \colon \Cu (A) \to \Cu (B)$
by $[a]_A \mapsto [\ph (a)]_B$
for  $a \in (A \otimes \cK)_{+}$.
\item
Considering  the usual identifications 
$A \subset A \otimes M_n  \subset A \otimes \cK$,
the \emph{pre-completed Cuntz semigroup of} $A$, denoted by $\W(A)$, is 
 $
 \W(A)= \cup_{n=1}^{\infty} M_n (A)_{+} / \sim
$.
\end{enumerate}
\begin{dfn}
Let $A$ be a \ca,
let $a \in A_{+}$,
and let $\ep \geq  0$.
Let $f \colon [0, \infty) \to
[0, \infty)$  be the function
$f (t) = \max (0, \, t - \ep) = (t - \ep)_{+}$.
Then, by functional calculus, define $(a - \ep)_{+} = f (a)$.
\end{dfn}
Part (\ref{PhiB.Lem.18.4.11}) of the following lemma is called 
\emph{Rørdam’s lemma} and is Proposition 2.4  of~\cite{Ror92}.
 Part (\ref{Item.largesub.lem1.7}) is Lemma~1.7 of \cite{Ph14}.
\begin{lem} 
\label{PhiB.Lem.18.4}
 Let $A$ be a \ca .
 \begin{enumerate}
 \item\label{PhiB.Lem.18.4.11}
 Let $a, b \in A_{+}$.
 Then the following relations are equivalent:
 \begin{enumerate}
 \item\label{PhiB.Lem.18.4.11.a}
 $a \precsim_A b$.
 \item\label{PhiB.Lem.18.4.11.b}
 $(a - \ep)_{+} \precsim_A b$ for all $\ep > 0$.
 \item\label{PhiB.Lem.18.4.11.c}
 For every $\ep > 0$ there is $\dt > 0$ such that
 $(a - \ep)_{+} \precsim_A (b - \dt)_{+}$.
 \end{enumerate}
\item\label{Item.largesub.lem1.7}
Let $\ep > 0$. Let $a, b \in A$ satisfy $0 \leq a \leq b$. 
Then 
$(a - \ep)_+ \precsim_A (b - \ep)_+$.
 \end{enumerate}
 \end{lem}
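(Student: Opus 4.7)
The plan is to handle the two parts separately, following the standard approaches of R\o rdam and Phillips.

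For part~(1), the direction (c) $\Rightarrow$ (b) is immediate from $(b-\dt)_+ \precsim b$. For (a) $\Rightarrow$ (b): given $a\precsim b$, choose $w$ with $\|wbw^*-a\|<\ep/2$; a standard norm-perturbation argument (proved by functional calculus on $wbw^*$) gives $(a-\ep)_+ \precsim wbw^* \precsim b$. The main content is (b) $\Rightarrow$ (c): choose $w_n$ with $w_n b w_n^* \to (a-\ep/2)_+$, take $n$ large, and set $x = w_n b^{1/2}$. Then
\[
(a-\ep)_+ \precsim (xx^*-\ep/4)_+ \sim (x^*x-\ep/4)_+ = \bigl(b^{1/2} w_n^* w_n b^{1/2} - \ep/4\bigr)_+,
\]
using the key identity $(xx^*-\lambda)_+ \sim (x^*x-\lambda)_+$ valid for every $\lambda>0$. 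Functional calculus on $b$ shows the right-hand side is Cuntz below $(b-\dt)_+$ for sufficiently small $\dt>0$, with an explicit witness of the form $c \mapsto b^{1/2} w_n^* w_n b^{1/2}$ cut down by a spectral function supported on $[\dt,\infty)$. Finally, (b) $\Rightarrow$ (a) follows by diagonalizing in $\ep \to 0^+$, using $(a-\ep)_+ \to a$ in norm: if for each $\ep_k \to 0$ we have witnesses $v_{k,m}$ with $v_{k,m} b v_{k,m}^* \to (a-\ep_k)_+$, then selecting $m_k$ appropriately produces a sequence witnessing $a \precsim b$.

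For part~(2), the central fact is that $0 \leq a \leq b$ gives a Douglas--Pedersen factorization $a^{1/2} = v b^{1/2}$ for a contraction $v \in A^{**}$. The explicit witnesses for the Cuntz subequivalence are constructed from the approximants $x_n = (b-\ep)_+^{1/2} (b+1/n)^{-1/2} a^{1/2} \in A$; then $x_n x_n^* = (b-\ep)_+^{1/2} (b+1/n)^{-1/2} a (b+1/n)^{-1/2} (b-\ep)_+^{1/2}$, which lies in the hereditary subalgebra $\overline{(b-\ep)_+ A (b-\ep)_+}$, while a direct computation using the identity $(x_n x_n^* - \dt)_+ \sim (x_n^* x_n - \dt)_+$ together with careful functional calculus and the inequality $a \leq b$ yields $(a-\ep)_+ \precsim (b-\ep)_+$ with matching cutoff on both sides as $n \to \infty$.

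The main obstacle is preserving the cutoff $\ep$ exactly on both sides in part~(2): merely invoking part~(1) would only produce $(a-\ep)_+ \precsim (b-\dt)_+$ for some $\dt<\ep$. The resolution requires exploiting the hereditary subalgebra structure coming from $a \leq b$ (rather than the weaker $a \precsim b$), together with the exact-cutoff preservation in the identity $(xx^*-\lambda)_+ \sim (x^*x-\lambda)_+$, which is what forbids any loss of $\ep$ when transferring between the two sides.
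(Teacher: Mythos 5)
The paper offers no proof of this lemma at all: it is quoted from the literature, with part~(\ref{PhiB.Lem.18.4.11}) attributed to Proposition~2.4 of R{\o}rdam's paper \cite{Ror92} and part~(\ref{Item.largesub.lem1.7}) to Lemma~1.7 of \cite{Ph14}. So there is no in-paper argument to compare against; what you have written is essentially a reconstruction of the standard proofs from those sources, and the architecture is sound. For part~(\ref{PhiB.Lem.18.4.11}) your cycle of implications is complete and each step names the right tool: the norm-perturbation lemma for (a)$\Rightarrow$(b), the identity $(xx^*-\lambda)_+\sim(x^*x-\lambda)_+$ with $x=w_nb^{1/2}$ followed by a spectral cut-down of $b$ for (b)$\Rightarrow$(c), and a diagonal argument for (b)$\Rightarrow$(a). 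For part~(\ref{Item.largesub.lem1.7}) the witnesses $x_n=(b-\ep)_+^{1/2}(b+\tfrac1n)^{-1/2}a^{1/2}$ do work: $x_nx_n^*$ lies in $\overline{(b-\ep)_+A(b-\ep)_+}$, while $x_n^*x_n\to a^{1/2}\,g(b)\,a^{1/2}$ in norm for $g(t)=(t-\ep)_+/t$, and the inequality $a\le b$ forces $\|a^{1/2}(b+\tfrac1n)^{-1/2}\|\le 1$, whence $a^{1/2}g(b)a^{1/2}\ge a-\ep$ and $(a-\ep)_+\precsim a^{1/2}g(b)a^{1/2}\precsim (b-\ep)_+$. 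Two caveats. First, several sub-lemmas you invoke in passing carry real content and are not ``just functional calculus'': the perturbation lemma ($\|a-c\|<\ep$ implies $(a-\ep)_+=dcd^*$ for a contraction $d$) and the fact that a self-adjoint $x\le c$ with $c\ge 0$ satisfies $x_+\precsim c$ both require genuine arguments. Second, your closing diagnosis of part~(\ref{Item.largesub.lem1.7}) slightly misplaces the mechanism: the cutoff is preserved not because of exactness in the $\sim$-identity (which you only need with a small auxiliary $\dt$ coming from the norm approximation), but because $(b-\ep)_+$ is built directly into the witness on the range side while the entire $\ep$-loss is absorbed on the domain side via $a^{1/2}g(b)a^{1/2}\ge a-\ep$. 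Neither point breaks the proof.
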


\subsection{The category $\Cu$}
 The Cuntz semigroup is a very powerful and useful tools for capturing the structure of C*-algebra. It was first applied in \cite{Cun78, BH82} to  study extensively the quasitarces on a C*-algebra and its association to the functionals on a Cuntz semigroup. Over the last two decades, both the Cuntz semigroup and radius of comparison have played key roles in formulating important regularity properties which contribute significantly to completion of the Elliott classification program \cite{CE08, ET08, Tom08} for simple amenable C*-algebras. 
 
The following definition is from \cite{CowEllIva08CuInv}.  (See Theorem~1 on page 163 of \cite{CowEllIva08CuInv}.)
\begin{dfn}
Let $(P, \leq)$ be a partially ordered set. 
Let $x, y \in P$. We say that $x$ is \emph{compactly contained} in $y$, denoted by $x \ll y$, if for any increasing sequence 
$(y_n)_{n \in \N}$ in $P$ with supremum $y$, we have
$x \leq y_{n_0}$ for some $n_0 \in \N$.
A \emph{$\Cu$-semigroup} is a positively ordered abelian monoid~$S$ with the following properties:
\begin{enumerate}
\item[(O1)]
Every increasing sequence in $S$ has a supremum.
\item[(O2)]
For every element $x$ in $S$, there exists a sequence $(x_n)_{n \in \N}$ in $S$ such that $x_0 \ll x_1 \ll x_2 \ll \cdots$ and such that $x = \sup_n x_n$.  
\item[(O3)] 
For every $x', x, y', y  \in S$ satisfying $x' \ll x$ and $y' \ll y$, we have $x' + y' \ll x + y$.
\item[(O4)]
For every increasing sequences $(x_n)_{n \in \N}$ and $(y_n)_{n \in \N}$ in $S$, we have 
\[
\sup_n (x_n+y_n) = \sup_nx_n + \sup_ny_n.
\]
\end{enumerate} 
\end{dfn}
It is shown in \cite{CowEllIva08CuInv} that the Cuntz semigroup $\Cu(A)$ of any C*-algebra is a $\Cu$-semigroup. 
Also, we refer to \cite{APT18} for detailed results on the 
regularity properties of Cuntz semigroups.

For $x \in S$, we denote by $\infty \cdot x$ the supremum $\sup_{n\in \N} n \cdot x$. 
Also, we denote by $\Idl(x)$ the ideal generated by $x$, that is
$\Idl(x) = \{y \in S \colon y \leq \infty \cdot x\}$.
\begin{dfn}\label{AbstractDfnFull}
Let $S$ be a $\Cu$-semigroup and let $x \in S$. We say that $x$ is full if
$\infty \cdot x$ is the largest element of $S$, or equivalently, $\Idl(x)=S$.
We let $\Fu (S)$ denote the set of full elements of $S$. 
\end{dfn}
For given $x, y$ in a $\Cu$-semigroup $S$, once can easily check that
$\Idl(x) \subseteq \Idl(y)$ if and only if $\infty \cdot x \leq \infty \cdot y$. 
In general, not all $\Cu$-semigroups  have a largest element. But for simple or countably-based 
$\Cu$-semigroups, a largest element does exist. See the discussion before Lemma~5.2.3 in \cite{APT18} for more details. 
%
\subsection{Functionals and extended
2-quasitraces}
\begin{dfn}\label{FunctionalDFn}
Let $S$ be a $\Cu$-semigroup. 
A \emph{functional}
on $S$ is a map 
$\lambda \colon S \to [0, \infty]$ which is
 additive, order preserving, satisfies $\lambda(0) = 0$, and preserves suprema of
increasing sequences in $S$.
\end{dfn}
We use $\cF(S)$ to denote the functionals on $S$ and,  for $z\in S$, we use $\cF_z(S)$ to denote 
the set of all $\lambda \in \cF(S)$ with 
$\lambda(z)=1$. 
We recall the following trivial
functional on a $\Cu$-semigroup $S$: 
\[
\lambda_{\infty} (x) = 
\begin{cases}
             0,  & \text{if } x=0 \\
             \infty,  & \text{if }  x\neq  0.
       \end{cases}
\]

\begin{dfn}
For a C*-algebra $A$,  a \emph{lower semicontinuous extended 2-quasitrace} on $A$ is a lower semicontinuous map
$\tau \colon (A \otimes \cK)_+ \to [0, \infty]$ which satisfies the following conditions hold:
\begin{enumerate}
\item
$\tau(a a^*) = \tau(a^* a)$ for all $a \in A \otimes \cK$. 
\item
$\tau (0) = 0$ and 
$\tau(s\cdot a) = s\cdot \tau(a)$ for all $a \in (A \otimes \cK)_+$ and all $s \in (0, \infty)$. 
\item 
$\tau(a+b) = \tau(a)+\tau(b)$ for all
 $a, b\in (A \otimes \cK)_+$ with  $ab=ba$. 
\end{enumerate}
\end{dfn}
For a C*-algebra $A$, we let $\QT_2 (A)$ denote
 the set of the lower semicontinuous extended 2-quasitraces on $A$ and 
 if $A$ is unital, we denote by
 $\QT^{1}_{2} (A)$
 the set of normalised 2-quasitraces on $A$
and denote by
$\T^{1} (A)$ the set of normalized traces on $A$. 
 It follows from Corollary 6.10 of \cite{Thi17} that extended quasitraces are order-preserving, i.e., if $a, b \in (A\otimes \cK)_+$ with 
$a \leq b$, then $\tau (a) \leq \tau (b)$ for $\tau \in \QT_2 (A)$.

It is shown in \cite{BH82} any unital stably finite C*-algebra admits a normalized 2-quasitrace.

For any $\tau \in \QT_2 (A)$, we define a map
$d_{\tau} \colon \Cu(A) \to [0, \infty]$ by the following formula:
\[
d_{\tau} (a)
 = \lim_{ n \to \infty} 
\tau \big(a^{ \tfrac{1}{n} }\big).
\] 
\begin{rmk}\label{rmkHandelElliott}
For a unital C*-algebra $A$,
it was shown by Blackadar and Handelman in
 \cite{BH82} that the association $\tau \mapsto d_\tau$ defines a bijection between $\QT^{1}_2(A)$
and $\cF_{[1_A]}(\Cu(A))$. 
In a more general setting, 
it is shown in Proposition 4.2 of \cite{ERS11} that
for a C*-algebra $A$ (not necessarily unital),  
 the association $\tau \mapsto d_\tau$ defines a homeomorphism  between $\QT_2(A)$
and $\cF (\Cu(A))$. 
\end{rmk}
\subsection{Full Elements of C*-algebras}
\begin{dfn}
Let  $A$ be  a \ca{} and let $a \in A$.
 We say $a$ is \emph{full} (in $A$) if the closed two-sided
ideal of $A$ generated by $a$ is all of $A$ (i.e., $\overline{\Span(AaA)}=A$). 
The set of all full elements in $A$ is denoted by $\Fu (A)$. Similarly, for a C*-subalgebra $B$ of $A$, we say that
$B$ is full if $\overline{\Span(ABA)}=A$. 
\end{dfn}

If $a$ is full in $A$, then $a$ also full in $M_n (A)$ for all $n \in \N$ and in $A \otimes \cK$.  
For a C*-algebra $A$ and $a \in (A \otimes \cK)_+$, it follows from Proposition~5.1.10 of \cite{APT18} that
 $a$ is full in $A \otimes \cK$
if and only if $[ a ]_A$ is full in $\Cu(A)$
in the sense of Definition~\ref{AbstractDfnFull}. 

\begin{lem}\label{FuSeq.El.Lem}
Let $A$ be a unital C*-algebra and let $a \in A_+$ be full. Let $(a_n)_{n=1}^{\infty}$ be a sequence in $A_+$ with $\lim_{n \to \infty} a_n =a$. 
Then there exists $n_0 \in \N$ such that  $a_m$ is full
for all $m \geq n_0$. 
\end{lem}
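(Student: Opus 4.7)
The plan is to use the standard unital reformulation of fullness: a positive element $a$ in a unital C*-algebra $A$ is full if and only if $1_A \in \overline{\Span(AaA)}$, which is in turn equivalent to the existence of finitely many $x_1,\ldots,x_k, y_1,\ldots,y_k \in A$ for which $\sum_{i=1}^{k} x_i a y_i$ is invertible. Once we have this reformulation, we exploit that norm-small perturbations of invertible elements remain invertible, which will promote the data witnessing fullness of $a$ to data witnessing fullness of $a_n$ for all large $n$.

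First I would record the elementary fact that in a unital \ca{} $A$, $a \in A_+$ is full iff $1_A \in \overline{\Span(AaA)}$, because the ideal generated by $a$ is proper iff it fails to contain $1_A$. Applying this to our full $a$, for $\ep = 1/2$ we obtain $x_1,\ldots,x_k, y_1,\ldots,y_k \in A$ with
\[
\left\| \sum_{i=1}^{k} x_i a y_i - 1_A \right\| < \tfrac{1}{2},
\]
so the element $z := \sum_{i=1}^{k} x_i a y_i$ is invertible in $A$, and by construction $z \in \overline{\Span(AaA)}$.

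Next I would use continuity of multiplication to transfer this to $a_m$ for $m$ large. Set $M = \sum_{i=1}^{k} \|x_i\|\,\|y_i\|$ (if $M=0$ then each $x_i a y_i = 0$ and the statement is trivially vacuous, but this cannot occur since $z$ would be $0$, not invertible). Pick $n_0 \in \N$ such that for every $m \geq n_0$, $\|a_m - a\| < \tfrac{1}{2M}$. Then for all $m \geq n_0$,
\[
\left\| \sum_{i=1}^{k} x_i a_m y_i - z \right\|
\leq \sum_{i=1}^{k} \|x_i\|\,\|a_m - a\|\,\|y_i\| < \tfrac{1}{2},
\]
whence $\| \sum_i x_i a_m y_i - 1_A \| < 1$, so $\sum_i x_i a_m y_i$ is invertible in $A$. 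Since $\sum_i x_i a_m y_i$ lies in the two-sided ideal generated by $a_m$, that ideal contains $1_A$, and therefore equals $A$. This shows $a_m$ is full for every $m \geq n_0$.

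No serious obstacle is expected; the only thing to watch is the standard unital-algebra reformulation of fullness (and the trivial handling of the degenerate case $M=0$, which simply does not arise because $z$ is invertible).
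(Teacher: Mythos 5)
Your proof is correct and follows essentially the same strategy as the paper's: produce a finite sum $\sum_i x_i a y_i$ witnessing fullness of $a$ that is invertible, then use the norm estimate $\|\sum_i x_i a_m y_i - \sum_i x_i a y_i\| \leq \sum_i \|x_i\|\|y_i\|\|a_m - a\|$ together with openness of the invertibles to conclude that $\sum_i x_i a_m y_i$ is invertible, hence $a_m$ is full, for all large $m$. The only cosmetic difference is that the paper starts from an exact identity $1_A = \sum_j b_j a b_j^*$ (a standard but slightly stronger fact for full positive elements), whereas you work directly from the approximate version $1_A \in \overline{\Span(AaA)}$, which is marginally more self-contained.
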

\begin{proof}
Since $a$ is full, there exist $t \in \N$ and $b_1, b_2, \ldots, b_t$ in $A$ such that 
$1_A =\sum_{j=1}^{t} b_j a b^*_j$. 
Let $\ep \in (0, 1)$. Set 
$M= \max_{1\leq j \leq t} (\|b_j \|+1)$ and 
$\dt = \min (\tfrac{\ep}{M^2}, 1)$. 
Since $\lim_{n \to \infty} a_n =a$, we choose
$N\in \N$ such that
$\|a - a_n \| < \dt$ for all $n \geq N$. 
Now, for all $n \geq N$, we have 
\begin{align*}
\left \|1_A - \sum_{j=1}^{t} b_j a_n b^*_j \right\|
&=
\left\|\sum_{j=1}^{t} b_j a b^*_j  - \sum_{j=1}^{t} b_j a_n b^*_j \right\|
\\& \leq 
\sum_{j=1}^{t} \| b_j\| \cdot \| a - a_n\| \cdot \| b^*_j\| < \ep < 1.
\end{align*}
This implies that $\sum_{j=1}^{t} b_j a_n b^*_j \in Aa_n A$ is invertible for $n \geq N$, and therefore, $\overline{\Span (Aa_n A)} =A$ for all $n \geq N$.  This complete the proof. 
\end{proof}
 In the following lemma, we characterize all full elements in $C(X) \otimes A$ for a simple
C*-algebra $A$.
\begin{lem}\label{FullElements_Lem}
Let $X$ be a compact  metric space and let $A$ be a  simple \ca. Then
\[
\Fu \left(C(X, A)\right)
=
 \Big\{ b \in C(X, A) \colon b(x)\neq 0 \mbox{ for all } x\in X \Big\}.
\] 
\end{lem}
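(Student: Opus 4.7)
The plan is to write $I := \overline{\Span(C(X,A) \cdot b \cdot C(X,A))}$ for the closed two-sided ideal of $C(X, A)$ generated by $b$, and to establish both inclusions separately.

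The inclusion $\Fu(C(X, A)) \subseteq \{b \in C(X, A) : b(x) \neq 0 \text{ for all } x \in X\}$ is the easy direction. I would argue by contrapositive: if $b(x_0) = 0$ for some $x_0 \in X$, then every element of the algebraic span $\Span(C(X,A) \cdot b \cdot C(X,A))$ vanishes at $x_0$, and pointwise vanishing is preserved under norm limits. Hence $I$ is contained in the proper ideal $\{f \in C(X, A) : f(x_0) = 0\}$, so $b \notin \Fu(C(X, A))$.

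For the reverse inclusion, I fix $b$ with $b(x) \neq 0$ for all $x \in X$, and aim to show that every $a \in C(X, A)$ lies in $I$. Given $\ep > 0$, I exploit simplicity pointwise: for each $x_0 \in X$, since $A$ is simple and $b(x_0) \neq 0$, the algebraic ideal $\Span(A b(x_0) A)$ is dense in $A$, so I may choose $c_1, \ldots, c_n, d_1, \ldots, d_n \in A$ with $\|\sum_{j=1}^{n} c_j b(x_0) d_j - a(x_0)\| < \ep/2$. Setting $f_{x_0} := \sum_{j=1}^{n} c_j b d_j$, with $c_j, d_j$ viewed as constant functions in $C(X, A)$, gives $f_{x_0} \in I$, and by continuity there is an open neighborhood $U_{x_0}$ of $x_0$ on which $\|f_{x_0}(x) - a(x)\| < \ep$. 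Compactness of $X$ then supplies a finite subcover $U_{x_1}, \ldots, U_{x_k}$ with a subordinate continuous partition of unity $\{\phi_1, \ldots, \phi_k\} \subset C(X)$, and the glued function $g := \sum_{i=1}^{k} \phi_i f_{x_i} \in C(X, A)$ satisfies $\|g - a\|_{\infty} \leq \ep$ by the standard convex-combination estimate.

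The step I expect to be the main (minor) obstacle is verifying that $g \in I$, which reduces to showing that $\phi f \in I$ whenever $\phi \in C(X)$ and $f \in I$. When $A$ is unital this is immediate, because $\phi \cdot 1_A \in C(X, A)$ and $(\phi \cdot 1_A) f \in I$. For non-unital simple $A$, I would fix an approximate unit $(e_{\lambda})_{\lambda}$ of $A$ and observe that the function $x \mapsto \phi(x) e_{\lambda}$ lies in $C(X, A)$, so that $(\phi e_{\lambda}) f \in I$; then $(\phi e_{\lambda}) f \to \phi f$ uniformly on $X$ because $f(X) \subseteq A$ is compact and approximate units act uniformly on compact subsets of $A$. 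Closedness of $I$ delivers $\phi f \in I$, and since $\ep > 0$ was arbitrary we conclude $a \in \overline{I} = I$, completing the reverse inclusion.
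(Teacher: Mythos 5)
Your proof is correct and follows essentially the same route as the paper: pointwise use of simplicity of $A$, compactness of $X$, and a partition of unity to glue the local approximants, with the easy inclusion handled by evaluation at a zero of $b$. The only difference is that the step you flag as the main obstacle (showing $\phi f \in I$ for scalar $\phi$) dissolves immediately, since for $c_j \in A$ the function $x \mapsto \phi(x)c_j$ already lies in $C(X,A)$ and can be absorbed into the left factor of $C(X,A)\,b\,C(X,A)$ — exactly as the paper does — so the approximate-unit detour is unnecessary.
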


\begin{proof}
Set 
$B= \left\{ b \in C(X, A) \colon b(x)\neq 0 \mbox{ for all } x\in X \right\}$. 
First, we prove $B \subseteq \Fu\left(C(X, A)\right)$.
Let $a \in C(X, A)$, let $b \in B$, and let $\ep>0$.
It suffices to find an element 
$v \in \Span ( C(X, A) b C(X, A) )$ such that 
$\| a - v \|< \ep$.

For $x\in X$, 
since $A$ is simple and $b (x)\neq 0$, we have $A = \overline{\Span (A b(x) A)}$. Then
 $a(x) \in \overline{ \Span (A b(x) A)}$. 
Therefore, there exist $m_x \in \N$ and  $v^{(k)}_{x}, w^{(k)}_x \in A$ for $k= 1, \ldots, m_x$ such that
\begin{equation}
\label{EQ.333tqo}
 \left\| \sum_{k=1}^{m_x} v^{(k)}_{x} b(x) w^{(k)}_{x} - a(x)\right\|< \ep.
\end{equation}
Since $a, b \in C(X, A)$, the   map $\xi_x \colon X \to [0, \infty)$, 
given by 
$
z \mapsto \left\| \sum_{k=1}^{m_x} v^{(k)}_{x} b(z) w^{(k)}_{x} - a(z) \right\|,
$
 is continuous.
Using this and  $\xi_x (x)<\ep$, there exists an open neighborhood $U_x$ of $x$ such that, for all $z\in U_x$,  
 \begin{equation}\label{Eq191012t2qo}
 \left\|\sum_{k=1}^{m_x} v^{(k)}_{x} b(z) w^{(k)}_{x} - a(z) \right\|< \ep.
\end{equation}
 Since $X$ is compact and $X=\bigcup_{x \in X} U_x$, we can choose
$x_1, \ldots, x_n \in X$ with $X=\bigcup_{j=1}^{n}U_{x_j}$ for some $n\in \N$.
Choose continuous functions $h_j \colon X \to [0, 1]$ for $j=1, \ldots, n$ 
which form a partition of unity on $X$ and such that 
\begin{equation}\label{EQ1.191025}
\mathrm{supp}(h_j) \subseteq U_{x_j}
\quad
\text{ and } 
\quad
\sum_{j=1}^{n} h_j =1.
\end{equation}

Set  
$v = \sum_{j=1}^{n} \sum_{k=1}^{m_{x_j}} h_j v^{(k)}_{x_j} b w^{(k)}_{x_j}.
$
Clearly  $v \in \Span ( C(X, A) b C(X, A) )$. 
Now we claim 
$\| a - v \| < \ep$.
To prove the claim, let $z \in X$. 
Define  
$\Lambda_z = \big\{ j \in \{1, \ldots, n\} \colon z \in U_{x_j} \big\}$.
Now, we estimate
\begin{align*}
\left\| a(z) - v(z) \right\|
&
\underset{\text{(\ref{EQ1.191025})}}{=}
\left\| 
\left( \sum_{j=1}^{n} h_j (z)\right) a(z) 
- 
\sum_{j=1}^{n} \sum_{k=1}^{m_{x_j}} h_j (z) v^{(k)}_{x_j} b(z) w^{(k)}_{x_j}
 \right\|
\\
&\leq
 \sum_{j=1}^{n} h_j (z) \left\| a(z) - \sum_{k=1}^{m_{x_j}} v^{(k)}_{x_j} b(z) w^{(k)}_{x_j} \right\|
\\
&
\overset{\text{(\ref{EQ1.191025})}}{\underset{\text{(\ref{Eq191012t2qo})}}{<}}
\sum_{j \in \Lambda_z} h_j (z) \ep
\leq
\sum_{j=1}^{n} h_j (z) \ep 
\underset{\text{(\ref{EQ1.191025})}}{=} \ep.
\end{align*}

To prove $\Fu \left(C(X, A)\right) \subseteq B$, let $f \in C(X, A)$ satisfy
$\overline{ \Span ( C(X, A) f C(x, A) )} = C(X, A)$. 
Assume that there exists $x_0 \in X$ such that $f(x_0)=0$.
Choose $c \in A$ with $\|c\|=1$ and $\ep\in (0, 1)$. Then define 
$g \colon X \to A$ by $g(x)=c$ for all $x\in X$.
Therefore, there exist $m \in \N$ and  $s^{(k)}, t^{(k)} \in C(X, A)$ for $k= 1, \ldots, m$ such that
$
 \left\| \sum_{k=1}^{m} s^{(k)} f t^{(k)} - g\right\|< \ep.
$
Using this at the second step, we get
\[
1=\left\| \sum_{k=1}^{m} s^{(k)}(x_0) f(x_0) t^{(k)}(x_0) - g(x_0)\right\|< \ep<1.
\]
This is a contradiction. 
\end{proof}
We conclude this section by the following lemmas about full elements. 
\begin{lem}\label{InfofFullElem}
Let $A$ be a unital stably finite C*-algebra and let $a \in \cup_{n=1}^{\infty} M_n (A)_+$ be full. 
Then:
\begin{enumerate}
\item\label{InfofFullElem.a}
$\tau(a) > 0$ for all $\tau \in \QT^1_2(A)$. 
\item \label{InfofFullElem.b}
$\inf_{\tau \in \QT^1_2(A)} \tau (a) >0$.
\item \label{InfofFullElem.c}
$\inf_{\tau \in \QT^1_2(A)} d_{\tau} (a) >0$. 
\end{enumerate}
\end{lem}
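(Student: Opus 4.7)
Assume $a \in M_n(A)_+$ for some $n \in \N$, so that the claim concerns the canonical extension of any $\tau \in \QT^1_2(A)$ to $M_n(A)$ (under which, by additivity of $\tau$ on the commuting family $\{1_A \otimes e_{jj}\}$ together with the identity $\tau(xx^*) = \tau(x^*x)$, one has $\tau(1_{M_n(A)}) = n$). Since $a$ is full in the unital C*-algebra $M_n(A)$, the standard fact used at the beginning of the proof of Lemma~\ref{FuSeq.El.Lem} produces $b_1, \ldots, b_t \in M_n(A)$ with $c := \sum_{j=1}^t b_j a b_j^*$ invertible in $M_n(A)$; in particular $c \geq \delta \cdot 1_{M_n(A)}$ for some $\delta > 0$. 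The whole argument is driven by packaging this as $c = v a_t v^*$, where $v = (b_1, \ldots, b_t)$ is viewed as a row matrix and $a_t = \diag(a, \ldots, a) \in M_{nt}(A)$.

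For part (\ref{InfofFullElem.c}), I pass to the Cuntz semigroup: invertibility of $c$ gives $[c] = [1_{M_n(A)}] = n [1_A]$, while $c = v a_t v^*$ gives $[c] \leq [a_t] = t [a]$. Hence $n [1_A] \leq t [a]$ in $\Cu(A)$, and applying the order-preserving, additive rank functional $d_\tau$ (normalised at $[1_A]$ via Remark~\ref{rmkHandelElliott}) yields $n \leq t \cdot d_\tau(a)$, i.e., $d_\tau(a) \geq n/t$, uniformly in $\tau \in \QT^1_2(A)$.

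For parts (\ref{InfofFullElem.a}) and (\ref{InfofFullElem.b}), the aim is to bound $\tau(c)$ above by a multiple of $\tau(a)$ that is independent of $\tau$. Setting $w = v a_t^{1/2}$ gives $c = ww^*$ and $w^*w = a_t^{1/2} v^* v a_t^{1/2} \leq \|v\|^2 a_t$; the 2-quasitrace identity $\tau(ww^*) = \tau(w^*w)$ and the order-preservation of $\tau$ from Corollary~6.10 of~\cite{Thi17} then give $\tau(c) \leq \|v\|^2 \tau(a_t)$. Writing $a_t$ as a sum of commuting block-diagonal summands, each Murray--von Neumann equivalent via matrix units to $a$, the commuting-additivity axiom together with $\tau(xx^*) = \tau(x^*x)$ yields $\tau(a_t) = t \tau(a)$. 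On the other hand, $c \geq \delta \cdot 1_{M_n(A)}$ and order-preservation give $\tau(c) \geq \delta n$. Combining, $\tau(a) \geq \delta n / (t \|v\|^2) > 0$, uniformly in $\tau$; this proves (\ref{InfofFullElem.b}), and (\ref{InfofFullElem.a}) is immediate.

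The principal obstacle is that a 2-quasitrace need not be additive on non-commuting positive sums, so one cannot bound $\tau(\sum_j b_j a b_j^*)$ term by term. The matrix repackaging $c = v a_t v^*$ sidesteps exactly this: it reduces the estimate to a single application of $\tau(ww^*) = \tau(w^*w)$ followed by an order-preservation step, after which only the commuting diagonal structure of $a_t$ is needed.
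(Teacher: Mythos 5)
Your proof is correct, but it takes a genuinely different route from the paper's. The paper proves (\ref{InfofFullElem.a}) by contradiction: if $\tau(a)=0$ then $d_\tau(a)=0$, which contradicts the Cuntz comparison $[1_A]\leq k\cdot[a]$ coming from fullness; it then deduces (\ref{InfofFullElem.b}) from (\ref{InfofFullElem.a}) via continuity of $\tau\mapsto\tau(a)$ on $\QT^1_2(A)$ (an argument that implicitly relies on compactness of $\QT^1_2(A)$, which the paper does not spell out), and finally gets (\ref{InfofFullElem.c}) from (\ref{InfofFullElem.b}) using $\tau\bigl(\tfrac{1}{\|a\|}a\bigr)\leq d_\tau\bigl(\tfrac{1}{\|a\|}a\bigr)$. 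You instead make everything quantitative and reverse the logical flow: (\ref{InfofFullElem.c}) follows directly from the comparison $n[1_A]\leq t[a]$ extracted from the row-matrix identity $c=va_tv^*$, and (\ref{InfofFullElem.a}), (\ref{InfofFullElem.b}) follow from the explicit uniform bound $\tau(a)\geq \delta n/(t\|v\|^2)$ obtained via the single application of $\tau(ww^*)=\tau(w^*w)$, order preservation, and commuting additivity on the block diagonal. What your approach buys is an explicit, $\tau$-independent lower bound and complete independence from any topological (compactness) input on $\QT^1_2(A)$; what the paper's approach buys is brevity, at the cost of leaving the compactness step in (\ref{InfofFullElem.b}) tacit. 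Your reliance on the symmetric form $\sum_j b_j a b_j^*$ with invertible sum is the same unproved standard fact the paper itself invokes at the start of Lemma~\ref{FuSeq.El.Lem}, so that is not a gap.
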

\begin{proof}
To prove (\ref{InfofFullElem.a}), 
assume $\tau (a)=0$ for some $\tau \in \QT^1_2(A)$. So, $d_\tau (a)=0$.
Since $[1_A] \ll \infty \cdot [a]$. Choose $k \in \N$ such that 
$[1_A] \leq k \cdot [a]$. So, 
$\tau (1_A) = d_\tau (1_A) \leq d_\tau (a)=0$.
 This is a contradiction.   

Part (\ref{InfofFullElem.b}) follows from the continuity of the map $g \colon \QT^1_2(A) \to 
[0, \infty)$, given by $g(\tau) = \tau (a)$, and (\ref{InfofFullElem.a}). 

We prove (\ref{InfofFullElem.c}). We know that 
$\tau \left(\tfrac{1}{\| a \|} \cdot a \right) \leq 
d_{\tau} \left(\tfrac{1}{\| a \|} \cdot a \right)$.
We use this and (\ref{InfofFullElem.b}) to get 
$0 < \inf_{\tau \in \QT^1_2(A)} \tau (a) \leq
\inf_{\tau \in \QT^1_2(A)} d_{\tau} (a)$. This complete the proof. 
\end{proof}

Recall that a unital C*-algebra is residually stably finite if  all its quotients are stably finite. 
The following lemma provides a useful method to verify a full element in unital residually stably finite C*-algebras. 
\begin{lem}\label{ResiduallyAndFullEquivalent}
Let $A$ be a unital residually stably finite C*-algebra
and let $a\in \cup_{n=1}^{\infty} M_n (A)_+$. Then 
$a$ is full if and only if $\inf_{\lambda \in \cF_{[1_A]}(\Cu(A))} \lambda ([a])>0$. 
\end{lem}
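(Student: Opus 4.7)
The plan is to prove the two implications separately, using the bijection from Remark~\ref{rmkHandelElliott} for the forward direction and the residual stable finiteness hypothesis (via passing to a quotient and invoking Blackadar--Handelman) for the reverse direction.

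For the forward direction, suppose $a$ is full. Since $A$ is residually stably finite, $A$ itself is stably finite (as a quotient of itself), so Lemma~\ref{InfofFullElem}(\ref{InfofFullElem.c}) applies and yields
\[
\inf_{\tau \in \QT^1_2(A)} d_{\tau}(a) > 0.
\]
By Remark~\ref{rmkHandelElliott}, the map $\tau \mapsto d_{\tau}$ is a bijection between $\QT^1_2(A)$ and $\cF_{[1_A]}(\Cu(A))$, and this bijection carries the value $d_\tau(a)$ to $\lambda([a])$ for the corresponding $\lambda$. Hence $\inf_{\lambda \in \cF_{[1_A]}(\Cu(A))} \lambda([a]) > 0$.

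For the reverse direction, I would argue by contrapositive. Suppose $a \in M_n(A)_+$ is not full. Then $[a]$ is not full in $\Cu(A)$ (by Proposition~5.1.10 of \cite{APT18}), so $\Idl([a])$ is a proper ideal of $\Cu(A)$. Equivalently, the closed two-sided ideal $I \subsetneq A$ with $a \in M_n(I)_+$ is proper. Let $\pi \colon A \to A/I$ be the quotient map; by the residual stable finiteness of $A$, the unital C*-algebra $A/I$ is stably finite. By the Blackadar--Handelman theorem, there exists a normalized 2-quasitrace $\sigma$ on $A/I$, which, via Remark~\ref{rmkHandelElliott}, gives a functional $\mu = d_{\sigma} \in \cF_{[1_{A/I}]}(\Cu(A/I))$.

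Now set $\lambda = \mu \circ \Cu(\pi) \colon \Cu(A) \to [0, \infty]$. Since $\Cu(\pi)$ is a $\Cu$-morphism and $\mu$ is a functional, $\lambda$ is a functional on $\Cu(A)$. Because $\pi$ is unital, $\Cu(\pi)([1_A]) = [1_{A/I}]$, so $\lambda([1_A]) = \mu([1_{A/I}]) = 1$, giving $\lambda \in \cF_{[1_A]}(\Cu(A))$. On the other hand, since $a \in M_n(I)_+$, we have $\pi(a) = 0$ in $M_n(A/I)$, hence $\Cu(\pi)([a]) = [\pi(a)] = 0$ and therefore $\lambda([a]) = 0$. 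This witnesses $\inf_{\lambda \in \cF_{[1_A]}(\Cu(A))} \lambda([a]) = 0$, completing the contrapositive.

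The only delicate point, and the main thing I would want to double-check, is that the assumption of \emph{residual} (rather than just plain) stable finiteness is genuinely used exactly in the reverse direction: it is what guarantees that the quotient $A/I$ admits a normalized 2-quasitrace to be pulled back. Aside from that, everything is a straightforward application of the bijection $\QT^1_2(A) \leftrightarrow \cF_{[1_A]}(\Cu(A))$ and of functoriality of $\Cu$ along the quotient map.
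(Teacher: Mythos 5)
Your proof is correct and follows essentially the same route as the paper: the forward direction via Lemma~\ref{InfofFullElem}(\ref{InfofFullElem.c}) and the Blackadar--Handelman correspondence, and the reverse direction by quotienting out the (proper) ideal generated by $a$, using residual stable finiteness to produce a normalized functional on the quotient, and pulling it back along $\Cu(\pi)$ to get $\lambda \in \cF_{[1_A]}(\Cu(A))$ with $\lambda([a])=0$. Your write-up is in fact somewhat more detailed than the paper's at the step where the functional on the quotient is constructed.
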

\begin{proof}
The forward implication from Lemma~\ref{InfofFullElem}
and Remark~\ref{rmkHandelElliott}.
So, we prove the backward implication
Set $I =\overline{\Span (A\otimes \cK) a (A\otimes \cK)}$. and assume that $a$ is not full. So, $0\neq I \subset A \otimes \cK$. 
 Since $A$ is residually stably  finite, it follows that $A \otimes \cK / I$ is finite, so we can
define $\bar{\lambda} \in \cF \Big( \Cu \big( (A \otimes \cK) / I\big) \Big)$ which is nonzero and $\bar{\lambda} (1_A + I)=1$.
Then
$\bar{\lambda}$ induces a functional on 
$\lambda$ on $\Cu (A)$ such that $\lambda ([1_A])= 1$ and $\lambda([a])=0$. 
This is a contradiction. 
\end{proof}
\section{The relative radius of comparison revisited}
\label{RCrevisited}
In this section,  we give an appropriate definition of the the radius of comparison of $A$ relative to a full element $a$ in $\cup_{n=1}^{\infty} M_n (A)$, denote by $\Rrc(A, a)$ for a unital residually stably finite C*-algebra $A$. This is a generalization of the conventional radius of comparison, introduced by Andew S. Toms in \cite{Tom06}. 
 We further show that, for this class of C*-algebras, 
$\Rrc(A, a)$ for a full element $a$ in $\cup_{n=1}^{\infty} M_n (A)$ is equal to the radius of comparison of $\Cu(A)$
relative to $[a]$, i.e., $\rc (\Cu (A), [a])$ which is introduced in Definition~ 3.3.2 of \cite{BRTW12} and is denoted by $r_{A, a}$ there. 
\begin{dfn}
 Let $S$ be a $\Cu$-semigroup and let $a \in S$.  The rank of 
 $a$, denoted by $\widehat{a}$, is defined as the map 
 $\widehat{a}\colon \cF(S) \to [0, \infty]$ given by 
 $\widehat{a}=\lambda (a)$. 
\end{dfn}

If we restrict the domain of rank of a given element in a $\Cu$-semigroup $S$, we explicitly mention it; when there is no risk of confusion, the domain is  $\cF(S)$ by default. 
 
 The following definition is contained in Definition~3.3.2 of \cite{BRTW12}. 
\begin{dfn}\label{DefRcCuAlgebraic}
Let $S$ be a $\Cu$-semigroup   and let $w \in \Fu (S)$.
\begin{enumerate}
\item
Let $r\in (0, \infty)$. We say that 
$S$  has \emph{$r$-comparison relative to $w$}, if whenever $x, y \in S$
satisfy 
$
\widehat{ x } + r \cdot \widehat{w}
 \leq
\widehat{y}$, 
then $x \leq y$. 
\item
 The \emph{radius of comparison of $S$ relative to
$w$}, denoted by
 $\rc (S, w)$, is 
 \[
 \rc (S, w)= \inf \left\{r \in (0, \infty) \colon 
S~ \mbox{has $r$-comparison relative to $w$ } 
\right\}.
 \]
 If there is no such $r$, then $ \rc (S, w)=\infty$.
\end{enumerate}
\end{dfn}
See Definition~2.20 in \cite{AE25} for 
the C*-algebraic of the above definition. 

We know that if $x\leq y$, then $\Idl (x) \subseteq \Idl (y)$. But, the converse is not true in general. 
In the above definition, one may ask if $x, y \in S$ satisfy $
\widehat{ x } + r \cdot \widehat{w}
 \leq
\widehat{y}$, can we even  compare $\Idl (x)$ and 
 $\Idl (y)$, let alone compare $x$ and $y$?   
We answer this question in the following lemma. 
 \begin{lem}
 \label{LemFuRCDf}
 Let $S$ be a $\Cu$-semigroup and let $e \in S$ be  full.
Let $r\in (0, \infty)$. Let $x, y \in S$
satisfy 
$\widehat{ x } + r \cdot \widehat{e}
 \leq
\widehat{y}$, then $y \in \Fu(S)$. 
 \end{lem}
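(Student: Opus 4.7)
The plan is to argue by contradiction. Suppose $y \notin \Fu(S)$, so the ideal $I := \Idl(y) = \{z \in S : z \leq \infty \cdot y\}$ is a proper subset of $S$. I first record that $I$ is (i) downward-closed (by transitivity of $\leq$), (ii) closed under finite sums (since $\infty \cdot y + \infty \cdot y = \infty \cdot y$), and (iii) closed under suprema of increasing sequences (if $z_n \leq \infty \cdot y$ for all $n$ and $z = \sup_n z_n$, then $z \leq \infty \cdot y$).

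Next, I would manufacture a ``detecting'' functional by setting
\[
\lambda_I \colon S \to [0, \infty], \qquad \lambda_I(z) = \begin{cases} 0, & z \in I, \\ \infty, & z \notin I. \end{cases}
\]
A short verification using (i)--(iii) shows that $\lambda_I$ satisfies the conditions of Definition~\ref{FunctionalDFn}: additivity follows because $z_1 + z_2 \in I$ iff both $z_1, z_2 \in I$ (one direction from (ii), the other from (i) applied to $z_1, z_2 \leq z_1 + z_2$); $\lambda_I(0) = 0$ since $0 \in I$; order-preservation is immediate from (i); and preservation of suprema of increasing sequences is immediate from (i) and (iii). Thus $\lambda_I \in \cF(S)$. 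Note that this is exactly the trivial functional $\lambda_\infty$ when $I = \{0\}$.

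Since $e$ is full, $\Idl(e) = S$. If we had $e \in I = \Idl(y)$, then $S = \Idl(e) \subseteq \Idl(y) = I$, contradicting the properness of $I$. Hence $e \notin I$, so $\lambda_I(e) = \infty$, whereas $y \in I$ gives $\lambda_I(y) = 0$. Evaluating the assumed inequality $\widehat{x} + r \cdot \widehat{e} \leq \widehat{y}$ at $\lambda = \lambda_I$ yields
\[
\lambda_I(x) + r \cdot \infty \leq \lambda_I(y) = 0,
\]
which is absurd since $r > 0$. Therefore $y$ must be full.

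The only mildly delicate point is checking that the ``ideal-indicator'' $\lambda_I$ is genuinely a functional on $S$; everything else is a direct consequence of fullness of $e$ and the evaluation of the hypothesis at that single distinguished functional.
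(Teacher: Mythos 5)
Your proof is correct and takes essentially the same approach as the paper: both construct the indicator functional $\lambda_I$ of the ideal $I=\Idl(y)$, evaluate the hypothesis at it, and conclude that $e\in\Idl(y)$ forces $y$ to be full (you phrase this as a contradiction with properness of $I$, while the paper argues directly via $\infty\cdot e=\infty\cdot y$, but these are the same argument). Your explicit verification that $\lambda_I$ is a functional fills in a step the paper leaves as "easy to check."
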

 \begin{proof}
 We may assume that $S$ is a nonzero $\Cu$-semigroup. 
 If
  $y=0$, then $\lambda(e)=0$ for all $\lambda \in \cF(S)$.
  This contradicts the fact that $\lambda_{\infty} (e)=\infty$.
 Now, 
 set $I= \Idl (y)$.
 Define $\lambda \colon S \to [0, \infty]$ by
 \[
\lambda_I (s) =
\begin{cases} 
0, & \text{if } s \in I, \\
\infty, & \mathrm{o.w}.
\end{cases}
\]
It easy to check  that $\lambda_I \in \cF(S)$ and 
$\lambda_I (y)=0$. Using this and the fact that 
$\lambda_I ( x ) + r \cdot \lambda_I (e)
 \leq
\lambda_I ( y )$, we get $\lambda_I (e)=0$, and therefore $e \in I$. This implies that
$\infty \cdot e \leq \infty \cdot y$. Since $e \in \Fu (S)$, it follows that $\infty \cdot e = \infty \cdot y$
and, therefore, $y \in \Fu (S)$, as desired. 
 \end{proof}

Let 
$\Im$ be the operation that assigns to any function its image. 
The following proposition provides some important properties of  the relative radius of comparison in algebraic terms. 
\begin{prp}\label{rcProperties}
 Let $S$ be a $\Cu$-semigroup. Let $x, y \in \Full(S)$ and let $\eta \in (0, \infty)$. Then: 
 \begin{enumerate}
 \item \label{rcProperties.a_0}
 If $\widehat{x} \leq \eta \cdot \widehat{y}$,
 then $\frac{1}{\eta}\cdot \rc (S, y) \leq \rc (S, x)$. 
 \item \label{rcProperties.a_1}
 If $x \leq  y$,
 then  $\rc (S, y) \leq \rc (S, x)$. 
 \item\label{rcProperties.b}
  If $\widehat{x} = \eta \cdot \widehat{y}$,
 then $\frac{1}{\eta}\cdot \rc (S, y) = \rc (S, x)$. 
 \item\label{rcProperties.c_0}
 If $\rc (S, x) < \infty$
and $\Im (\widehat{y}) =\{0,  \infty\}$,  then $\rc (S, y) =0$.
 \item\label{rcProperties.c}
 If $\rc (S, x) < \infty$, then $\rc (S, \infty \cdot x) =0$.
\item \label{rcProperties.d}
For every $n \in \N$, we have 
$\rc (S, n\cdot x)= \frac{1}{n} \cdot \rc(S, x)$.
\item\label{rcProperties.f}
$\rc (S, x+y) \leq \min \big( \rc (S, x), \rc (S, y)\big)$. 
 \end{enumerate}
\end{prp}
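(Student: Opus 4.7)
The plan is to derive (\ref{rcProperties.a_0}) directly from the definition of $r$-comparison, obtain (\ref{rcProperties.a_1}), (\ref{rcProperties.b}), (\ref{rcProperties.d}), and (\ref{rcProperties.f}) as formal consequences, and then handle (\ref{rcProperties.c_0}) and (\ref{rcProperties.c}) separately via a fullness argument about functionals. The underlying tool used throughout is that the set of $r \in (0, \infty)$ for which $S$ has $r$-comparison relative to a fixed full $w$ is upward closed: if $\widehat{u} + r'\widehat{w} \leq \widehat{v}$ and $r \leq r'$, then $\widehat{u} + r\widehat{w} \leq \widehat{v}$. Consequently, whenever $R > \rc(S, w)$, the $\Cu$-semigroup $S$ has $R$-comparison relative to $w$, which legitimizes passing to infima.

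For (\ref{rcProperties.a_0}), I assume $\widehat{x} \leq \eta \widehat{y}$, fix any $R > \rc(S, x)$, and consider $u, v \in S$ with $\widehat{u} + \eta R \widehat{y} \leq \widehat{v}$. Multiplying the hypothesis by $R$ yields $R\widehat{x} \leq \eta R \widehat{y}$, so $\widehat{u} + R\widehat{x} \leq \widehat{v}$, and hence $u \leq v$ by $R$-comparison relative to $x$. This shows that $S$ has $\eta R$-comparison relative to $y$, and passing to the infimum gives $\rc(S, y) \leq \eta \rc(S, x)$, which is (\ref{rcProperties.a_0}). From this, (\ref{rcProperties.a_1}) is the case $\eta = 1$ (since $x \leq y$ forces $\widehat{x} \leq \widehat{y}$); (\ref{rcProperties.b}) follows by applying (\ref{rcProperties.a_0}) with the roles of $x, y$ swapped and $\eta$ replaced by $1/\eta$; (\ref{rcProperties.d}) is (\ref{rcProperties.b}) with $\eta = 1/n$ applied to the pair $(x, n\cdot x)$, noting $\widehat{n\cdot x} = n\widehat{x}$ and that $n \cdot x$ is full because $x$ is; and (\ref{rcProperties.f}) follows from (\ref{rcProperties.a_1}) applied to $x \leq x+y$ and $y \leq x+y$, observing that $x+y \in \Fu(S)$ since $x$ is full.

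For (\ref{rcProperties.c_0}), the key step I would isolate as a lemma is that if $w \in \Fu(S)$ and $\lambda \in \cF(S)$ satisfies $\lambda(w) = 0$, then $\lambda \equiv 0$: additivity gives $\lambda(n \cdot w) = 0$ for all $n$, preservation of suprema yields $\lambda(\infty \cdot w) = 0$, and since $w$ is full every $s \in S$ satisfies $s \leq \infty \cdot w$, forcing $\lambda(s) = 0$. Given this, under $\rc(S, x) < \infty$ and $\Im(\widehat{y}) \subseteq \{0, \infty\}$, every nonzero $\lambda \in \cF(S)$ must satisfy $\lambda(y) = \infty$. For any $r > 0$ and any $u, v$ with $\widehat{u} + r\widehat{y} \leq \widehat{v}$, this forces $\lambda(v) = \infty$ whenever $\lambda \neq 0$, so $\widehat{u} + R\widehat{x} \leq \widehat{v}$ holds trivially for any $R > \rc(S, x)$; hence $u \leq v$ by $R$-comparison relative to $x$, proving $\rc(S, y) = 0$. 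Part (\ref{rcProperties.c}) is the special case $y = \infty \cdot x$, which is the largest element of $S$ (hence full) and satisfies $\widehat{y}(\lambda) = \infty \cdot \lambda(x) \in \{0, \infty\}$. I expect the main obstacle to be the fullness lemma just extracted, which is the mechanism converting $\rc(S, x) < \infty$ into the quantitative conclusion $\rc(S, y) = 0$; the remaining items are routine rearrangements of the defining inequality.
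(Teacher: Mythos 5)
Your proposal is correct and follows essentially the same route as the paper: part (\ref{rcProperties.a_0}) is proved by the same substitution argument on the defining inequality, parts (\ref{rcProperties.a_1}), (\ref{rcProperties.b}), (\ref{rcProperties.d}), and (\ref{rcProperties.f}) are deduced from it exactly as in the paper, and parts (\ref{rcProperties.c_0}) and (\ref{rcProperties.c}) rest on the same observation that a functional vanishing on a full element is identically zero, so that $\Im(\widehat{y}) = \{0,\infty\}$ makes the comparison hypothesis relative to $y$ vacuously strong. The only cosmetic difference is that you establish $r$-comparison relative to $y$ for every $r>0$ directly, where the paper first derives $\rc(S,y)\leq \tfrac{1}{n}\rc(S,x)$ and lets $n\to\infty$; this changes nothing of substance.
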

\begin{proof}
We prove (\ref{rcProperties.a_0}). 
If $\rc(S, x)= \infty$, then (\ref{rcProperties.a_0}) is trivially true. 
So, let $\rc(S, x) < \infty$. 
 Let $r \in (0, \infty)$. It suffices to show that if
 $S$ has $r$-comparison relative to $x$, then $S$ has $\eta r$-comparison relative to $y$. 
 So, suppose that $S$ has $r$-comparison relative to $x$.
 Let $w, z \in S$ satisfy 
$
\widehat{w}  + (\eta r) \cdot \widehat{y}
 \leq
\widehat{z}$. 
 Using this and 
$\widehat{x} \leq \eta \cdot \widehat{y}$, we get
  $
\widehat{w}  +  r \cdot \widehat{x}
 \leq
 \widehat{z}$.  
 Since $S$ has $r$-comparison relative to $x$, then 
 $w\leq z$. Thus, $S$ has $\eta r$-comparison relative to $y$. 
 
We prove  (\ref{rcProperties.a_1}). 
Since $x \leq y$, it follows from  that $\lambda(x) \leq \lambda (y)$
 for all $\lambda \in \cF(S)$. Now, we use this and  (\ref{rcProperties.a_0}) to get (\ref{rcProperties.a_1}).  
 
 Part (\ref{rcProperties.b}) follows immediately from (\ref{rcProperties.a_0}). 

We prove (\ref{rcProperties.c_0}).
Let $r \in (0, \infty)$. Suppose that $S$ 
has $r$-comparison relative to $x$ (there exists such $r$ because $\rc (S, x)< \infty$). 
Let $n \in \N$. 
Let $a, b \in S$ satisfy 
 \begin{equation}\label{Eq11.2025.03.08}
\widehat{a} + (\tfrac{r}{n}) \cdot \widehat{y}
 \leq
\widehat{ b}.  
\end{equation}
Sinc  $\Im ( \widehat {y} ) = \{0, \infty\}$, we have $r \cdot \lambda (x) \leq (\tfrac{r}{n}) \cdot \lambda (y)$ for all 
$\lambda \in \cF(S)$ (if $\lambda (y)=0$, then $\lambda \equiv 0$). Now, using this and (\ref{Eq11.2025.03.08}), we get
$
 \widehat{a}  + r \cdot \widehat{ x }
 \leq
\widehat{b}$.  
 Since $S$ has $r$-comparison relative to $x$, it follows that 
 $x\leq y$. This implies that 
 $\rc (S, y) \leq \frac{r}{n}$.
 Take infimum over all such $r$ to get
 $\rc (S, y) \leq \frac{1}{n} \cdot \rc (S, x)$. Using $\rc(S, x)<\infty$, we get 
$\rc (S, y)=0$.

 Part (\ref{rcProperties.c})
 follows from (\ref{rcProperties.c_0}) and the fact that
 \begin{equation*}
\lambda (\infty \cdot x) = \lambda \left(\sup_{n \in \N} n\cdot x \right) = \sup_{n \in \N} (n \cdot \lambda (x) )=
\begin{cases} 
0, & \text{if } \lambda(x) = 0, \\
\infty, & \text{if } \lambda (x) \neq 0.
\end{cases}
\end{equation*}
 
 Part (\ref{rcProperties.d}) follows immediately from (\ref{rcProperties.b}) and the fact that $\lambda (n\cdot x)= n \cdot\lambda (x)$
 for all $\lambda \in \cF(S)$ and all $n\in \N$. 
 
 We prove (\ref{rcProperties.f}).
 Since $x \leq x+ y$ and $y \leq x+ y$, it follows from (\ref{rcProperties.a_1}) that 
 $\rc(S, x+y) \leq \rc (S, x)$ and 
  $\rc(S, x+y) \leq \rc (S, y)$. So, (\ref{rcProperties.f}) follows. 
\end{proof}
Recall that 
a C*-algebra $A$ is said to
be \emph{purely infinite} if $A$ has no non-zero abelian quotients and if for each pair
of elements $a, b \in A_+$ such that $a \in \overline{\Span(AbA)}$, we have $a \precsim b$. 
In the following lemma, we show that
for this class of C*-algebras, the relative radius of comparison is zero. This is in fact one of the advantages of working with Definition~\ref{DefRcCuAlgebraic} for this class of C*-algebras. 
Because if we were allowed to use 
Definition~\ref{RrcDf} for unital purely infinite C*-algebras, we would obtain $\infty$ for the value of the relative radius of comparison.

In the following lemma, we explicitly show that
the relative radii of comparison of purely infinite C*-algebras (not necessarily simple) are zero, using Definition~\ref{DefRcCuAlgebraic}. 

\begin{lem}
\label{RcPurelyInfinite}
Let $A$ be a purely infinite C*-algebra.
and let $a \in (A\otimes \cK)_+$ be full. 
Then $\rc (\Cu(A), [a])=0$. 
\end{lem}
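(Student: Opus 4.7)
The plan is to exploit the fundamental structural feature of purely infinite C*-algebras that every nonzero element of $\Cu(A)$ is absorbing under addition, which forces every functional on $\Cu(A)$ to take values in $\{0,\infty\}$ on nonzero elements.

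First, I would establish that for every nonzero $b\in (A\otimes\cK)_+$ one has $[b\oplus b]=[b]$ in $\Cu(A)$. Since $A$ purely infinite implies $A\otimes\cK$ purely infinite (a standard fact), and since $b$ and $b\oplus b$ generate the same closed two-sided ideal of $A\otimes\cK$, the definition of pure infiniteness gives $b\oplus b\precsim b$, while the reverse inequality is immediate. Consequently $z=\infty\cdot z$ for every nonzero $z\in\Cu(A)$, and any $\lambda\in\cF(\Cu(A))$ satisfies $\lambda(z)=2\lambda(z)$, forcing $\lambda(z)\in\{0,\infty\}$.

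To prove $\rc(\Cu(A),[a])=0$, I would fix an arbitrary $r>0$ together with $x,y\in\Cu(A)$ satisfying $\widehat{x}+r\cdot\widehat{[a]}\leq\widehat{y}$, aiming to show $x\leq y$. The case $x=0$ is trivial, so I assume $x\neq 0$. For any proper ideal $I\subsetneq \Cu(A)$, the functional $\lambda_I$ constructed in the proof of \Lem{LemFuRCDf} (taking value $0$ on $I$ and $\infty$ elsewhere) is nonzero. Since $[a]$ is full, $[a]\notin I$ (otherwise $\Cu(A)=\Idl([a])\subseteq I$, a contradiction), so $\lambda_I([a])=\infty$ by the first step. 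The assumed inequality then forces $\lambda_I(y)=\infty$, i.e.\ $y\notin I$. Since $I$ was an arbitrary proper ideal, $\Idl(y)=\Cu(A)$, so $y$ is full. Being nonzero and full, $y=\infty\cdot y$ is the largest element of $\Cu(A)$ by the first step, so $x\leq y$.

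The principal technical point is the first step: the paper's stated definition of pure infiniteness concerns elements of $A$ itself, so in order to apply it to $b$ and $b\oplus b$ one must invoke the (standard) fact that pure infiniteness passes to matrix algebras and to $A\otimes\cK$. An alternative packaging would be to verify the hypotheses of \Prp{rcProperties}(\ref{rcProperties.c_0}) applied with $x=y=[a]$, but this still requires proving $\rc(\Cu(A),[a])<\infty$, which amounts to essentially the same argument.
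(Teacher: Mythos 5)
Your proof is correct and follows essentially the same route as the paper's: both deduce from the hypothesis that $y$ must be full (via the ideal functionals $\lambda_I$ from the proof of Lemma~\ref{LemFuRCDf}) and then invoke pure infiniteness of $A\otimes\cK$ (Proposition~3.5 of \cite{KR02}) to conclude. The only cosmetic difference is that the paper applies the definition of pure infiniteness directly to the pair $(x,y)$ to get $x\precsim y$ from $x\in\overline{\Span((A\otimes\cK)y(A\otimes\cK))}$, whereas you apply it to $(y\oplus y,\,y)$ to conclude that the full element $y$ satisfies $y=\infty\cdot y$ and is therefore the largest element of $\Cu(A)$.
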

\begin{proof}
Since $A$ is purely infinite, it follows from Proposition~3.5 of \cite{KR02} that $A \otimes \cK$ and $M_n(A)$ for all $n\in \N$ are also purely infinite. 
Let $r>0$ and let 
 $x, y \in (A \otimes \cK)_+$
satisfy 
$
\widehat{ [x]}  + r \cdot\widehat{[a]}
 \leq
\widehat{ [y]}$. 
We show that $x \precsim y$. 
We may assume that 
$A$ is nonzero C*-algebra and
$y\neq 0$. 
(If $y=0$, then,
$\widehat{[a]}=0$ which contradicts 
$\lambda_\infty ([a])=\infty$). 
Using Lemma~\ref{LemFuRCDf}, we get that $y$ is full, so $x \in \overline{\Span ((A\otimes \cK) y(A\otimes \cK))}$. Now, it follows from the definition that $x \precsim y$. Therefore, $\rc (\Cu(A), [a])=0$. This completes the proof.
\end{proof}

The following definition is a relative version of the conventional radius of comparison introduced by Andrew Toms in \cite{Tom06}. Here we consider the class of unital residually stably finite C*-algebra instead of unital stably finite C*-algebra
  while defining  $\Rrc(A, a)$. It is mainly because of the fact that Definition~6.1 in \cite{Tom06} was intended to be used for simple unital stably finite C*-algebras. Beyond simple C*-algebras, 
there is a technical issue in the Definition~6.1 of \cite{Tom06} (see Lemma~\ref{LemFullRrc} for more details). 
Nonetheless, if we stick with the class of unital residually 
stably finite C*-algebras, both Definition~6.1 of \cite{Tom06} and Definition~\ref{RrcDf} are perfectly suitable. 
\begin{dfn}\label{RrcDf}
Let $A$ be a unital residually stably finite C*-algebra and let $c \in \cup_{n=1}^{\infty} M_n (A)_+$ be full. 
\begin{enumerate}
\item \label{RrcDf.1}
Let $r\in (0, \infty)$. We say that 
$A$  has \emph{$r$-comparison relative to $c$}, if whenever $x, y \in \cup_{n=1}^{\infty} M_n (A)_+$
satisfy 
$
\widehat{[x]}  + r 
 <
\widehat{[y]}
$
 on  $\cF_{[c]} (\Cu(A))$,
then $x \precsim_A y$. 
\item\label{RrcDf.2}
 The \emph{radius of comparison of $A$ relative to
$a$}, denoted by
 $\Rrc (A, c )$, is 
 \[
 \Rrc (A, c)= \inf \left\{r \in (0, \infty) \colon 
A~ \mbox{has $r$-comparison relative to $c$ } 
\right\}.
 \]
 If there is no such $r$, then $ \Rrc (A, c)=\infty$.
\end{enumerate}
\end{dfn}
The letter R used in the notation $\Rrc(A, c)$ stands for ``relative". Also, 
if we take $c=1_A$, then $\Rrc(A, 1_A)$ is equal to
$\rc (A)$, as defined  in Definition~6.1 in \cite{Tom06}. 
\begin{rmk} 
 The relative radius of comparison does not change under transition to
a matrix algebra over $A$ and only depends on the the given full element, i.e., $\Rrc (M_n(A), c) = \Rrc (A, c)$
for $n \in \N$ and a given full positive element $c$ in 
$\cup_{n=1}^{\infty} M_n (A)_+$. We caution the reader not to confuse this with that fact that $\rc (M_n (A))= \frac{1}{n}\cdot \rc(A)$ as in Proposition~6.2(ii) of \cite{Tom06}. Actually, $\rc (M_n(A))$ in  our setting is 
$\Rrc \Big(M_n (A), \ \bigoplus_{j=1}^n 1_A \Big)$ which is equal to
$\frac{1}{n} \cdot \Rrc (M_n (A), 1_A)$ by Proposition~\ref{rcProperties}(\ref{rcProperties.d}).
\end{rmk}

The following lemma is used in the proof  of Lemma~\ref{LemFullRrc} and Lemma~\ref{PropR1234}. 

\begin{lem}\label{LemcFbigection}
Let $A$ be a unital stably finite C*-algebra and let $a \in \cup_{n=1}^{\infty} M_n (A)_+$ be full.
Let $\lambda \in \cF(S)$.  
\begin{enumerate}
\item\label{LemcFbigection.a}
$0<\lambda ([a]) <\infty$ in and only if $0<\lambda ([1_A])<\infty$. 
\item 
\label{LemcFbigection.b}
The map $\varphi \colon \cF_{[1_A]}(\Cu(A)) \to \cF_{[a]}(\Cu(A))$, given by 
$\lambda \mapsto \tfrac{1}{\lambda(a)} \cdot \lambda$, is bijective. 
\end{enumerate}
\end{lem}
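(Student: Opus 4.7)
The plan is to use that fullness of $a$ in $A$ gives a two-sided Cuntz comparison between $[a]$ and $[1_A]$ in $\Cu(A)$, which forces any functional value at $[a]$ and at $[1_A]$ to be finite (or infinite) simultaneously. Once this is in place, part~(\ref{LemcFbigection.b}) is a bookkeeping calculation with the rescaling $\lambda \mapsto \tfrac{1}{\lambda([a])}\cdot\lambda$.

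For part (\ref{LemcFbigection.a}), I would first fix $k\in\N$ with $a\in M_k(A)_+$. Then $[a]\leq k\cdot[1_A]$ in $\Cu(A)$, which gives $\lambda([a])\leq k\cdot\lambda([1_A])$. Conversely, fullness of $a$ in $A$ (hence in $M_k(A)$) gives $b_1,\ldots,b_t\in M_k(A)$ with $1_{M_k(A)}=\sum_{j=1}^{t} b_j a b_j^*$, so $[1_{M_k(A)}] \leq t\cdot[a]$; combined with $[1_A]\leq[1_{M_k(A)}]$ this yields $\lambda([1_A])\leq t\cdot\lambda([a])$. The two inequalities
\begin{equation*}
\lambda([1_A])\leq t\cdot\lambda([a]) \qquad\text{and}\qquad \lambda([a])\leq k\cdot\lambda([1_A])
\end{equation*}
immediately give that $\lambda([1_A])$ is strictly positive (respectively finite) iff $\lambda([a])$ is. This is the one substantive step; everything else is algebraic.

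For part (\ref{LemcFbigection.b}), I would check well-definedness, injectivity, and surjectivity of $\varphi$. \textbf{Well-defined:} For $\lambda\in\cF_{[1_A]}(\Cu(A))$ we have $\lambda([1_A])=1$, so by~(\ref{LemcFbigection.a}) we have $0<\lambda([a])<\infty$, making $\tfrac{1}{\lambda([a])}\cdot\lambda$ a bona fide functional normalized at $[a]$. \textbf{Injective:} If $\tfrac{1}{\lambda([a])}\cdot\lambda = \tfrac{1}{\mu([a])}\cdot\mu$, evaluate both sides at $[1_A]$ to obtain $\tfrac{1}{\lambda([a])}=\tfrac{1}{\mu([a])}$, so $\lambda=\mu$. \textbf{Surjective:} Given $\mu\in\cF_{[a]}(\Cu(A))$, we have $\mu([a])=1$, so again by~(\ref{LemcFbigection.a}) the value $\mu([1_A])$ lies in $(0,\infty)$; set $\lambda=\tfrac{1}{\mu([1_A])}\cdot\mu\in\cF_{[1_A]}(\Cu(A))$ and compute $\varphi(\lambda)=\tfrac{1}{\lambda([a])}\cdot\lambda = \tfrac{\mu([1_A])}{\mu([a])}\cdot\tfrac{1}{\mu([1_A])}\cdot\mu = \mu$.

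The only conceptual obstacle is the backward direction of~(\ref{LemcFbigection.a}): turning fullness of $a$ in the C*-algebra into a Cuntz domination $[1_A]\leq n\cdot[a]$ that can be evaluated by $\lambda$. Once one remembers that fullness produces a finite sum $1=\sum b_j a b_j^*$ (which gives $1\precsim a\oplus\cdots\oplus a$ via Rørdam's lemma, \Lem{PhiB.Lem.18.4}(\ref{PhiB.Lem.18.4.11})), the rest of the argument is formal. Stable finiteness of $A$ is not actually needed for the argument itself; it is in the hypothesis because $\Rrc$ and $\cF_{[1_A]}$ are only interesting in that setting, and to ensure that normalized functionals at $[1_A]$ exist.
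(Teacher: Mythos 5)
Your proof is correct; the paper itself gives no argument here (its proof reads only ``It is easy to check''), and your write-up supplies exactly the standard verification one would expect: the two-sided domination $\lambda([1_A])\leq t\cdot\lambda([a])$ and $\lambda([a])\leq k\cdot\lambda([1_A])$ coming from $1_{M_k(A)}=\sum_j b_j a b_j^*$ (the same fullness identity the paper uses in Lemma~\ref{FuSeq.El.Lem}) and from $a\leq \|a\|\cdot 1_{M_k(A)}$, followed by the routine bijectivity check. Your closing observation that stable finiteness plays no role in the argument itself is also accurate.
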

\begin{proof}
It is easy to check. 
\end{proof}
Consider $x, y$ in Definition~\ref{RrcDf}(\ref{RrcDf.1}). 
It is natural to ask whether 
the ideal generated by  $x$ and the ideal generated by $y$
can be compared. 
If we cannot do this, then comparing $x$ and $y$ in the Cuntz semigroup would not make sense.  
The answer of the question is ``Yes" in the setting of unital residually stably finite C*-algebra and we show it in the following lemma.
\begin{lem}\label{LemFullRrc}
Let $A$ be a unital residually stably finite C*-algebra and let $a \in \cup_{n=1}^{\infty} M_n (A)_+$ be full. 
Let $r\in (0, \infty)$. Let $x, y \in \cup_{n=1}^{\infty} M_n (A)_+$
satisfy 
\begin{equation}
\label{EQ1.2025.03.10}
\widehat{[x]}  + r 
 < 
\widehat{[y]}
\quad
\mbox{ on } \cF_{[a]} (\Cu(A)),
\end{equation} then
$y$ is full, and, therefore, 
$\overline{\Span (A\otimes \cK) x (A\otimes \cK)} \subseteq \overline{\Span (A\otimes \cK) y (A\otimes \cK)}.
$
\end{lem}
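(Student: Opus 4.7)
The plan is to reduce fullness of $y$ to the criterion in \Lem{ResiduallyAndFullEquivalent} (namely, $\inf_{\mu \in \cF_{[1_A]}(\Cu(A))} \mu([y]) > 0$) and then translate the hypothesis, which is stated on $\cF_{[a]}(\Cu(A))$, into a bound on $\cF_{[1_A]}(\Cu(A))$ via the rescaling bijection of \Lem{LemcFbigection}(\ref{LemcFbigection.b}). The containment of ideals at the end will be automatic once $y$ is known to be full.

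First, I would fix an arbitrary $\mu \in \cF_{[1_A]}(\Cu(A))$; such $\mu$ exists because $A$ is unital stably finite, so $\QT_2^1(A) \neq \emptyset$ by \cite{BH82} and \Rmk{rmkHandelElliott} applies. Since $a$ is full and $\mu([1_A]) = 1$, \Lem{LemcFbigection}(\ref{LemcFbigection.a}) gives $0 < \mu([a]) < \infty$, so $\lambda := \tfrac{1}{\mu([a])} \cdot \mu$ belongs to $\cF_{[a]}(\Cu(A))$ by \Lem{LemcFbigection}(\ref{LemcFbigection.b}). Feeding $\lambda$ into the hypothesis (\ref{EQ1.2025.03.10}) yields $\lambda([x]) + r < \lambda([y])$, and in particular $\lambda([y]) > r$; rescaling by $\mu([a])$ gives
\[
\mu([y]) > r \cdot \mu([a]).
\]

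Next, I would invoke the forward direction of \Lem{ResiduallyAndFullEquivalent} applied to the full element $a$ to obtain $m := \inf_{\mu \in \cF_{[1_A]}(\Cu(A))} \mu([a]) > 0$. Combining with the previous inequality (valid for every $\mu \in \cF_{[1_A]}(\Cu(A))$), one concludes $\inf_{\mu \in \cF_{[1_A]}(\Cu(A))} \mu([y]) \geq r \cdot m > 0$, and then the converse direction of \Lem{ResiduallyAndFullEquivalent} yields that $y$ is full. Finally, since $y$ is full in $A$ it is also full in $A \otimes \cK$ (noted after \Def{AbstractDfnFull}), so $\overline{\Span (A \otimes \cK)\, y\, (A \otimes \cK)} = A \otimes \cK$ and the claimed inclusion is trivial.

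There is no substantive obstacle; the only delicate point is ensuring that the rescaling $\mu \mapsto \tfrac{1}{\mu([a])} \mu$ is well-defined, which is precisely where both hypotheses on $A$ enter: residual stable finiteness (through \Lem{ResiduallyAndFullEquivalent}) guarantees the bound $m > 0$, and fullness of $a$ together with unital stable finiteness (through \Lem{LemcFbigection}(\ref{LemcFbigection.a})) guarantees $0 < \mu([a]) < \infty$ pointwise. Without either one, the comparison between $\cF_{[a]}$ and $\cF_{[1_A]}$ could degenerate and the argument would collapse.
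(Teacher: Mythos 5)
Your proposal is correct and follows essentially the same route as the paper's proof: both convert the hypothesis on $\cF_{[a]}(\Cu(A))$ into the inequality $\widehat{[x]} + r\cdot\widehat{[a]} < \widehat{[y]}$ on $\cF_{[1_A]}(\Cu(A))$ via the rescaling bijection of Lemma~\ref{LemcFbigection}, deduce $\inf_{\lambda \in \cF_{[1_A]}(\Cu(A))}\lambda([y]) \geq r\eta > 0$ from the forward direction of Lemma~\ref{ResiduallyAndFullEquivalent} applied to $a$, and conclude fullness of $y$ from the backward direction of that same lemma. Your explicit remark that the ideal inclusion is then trivial because $y$ generates all of $A\otimes\cK$ is a small but welcome clarification that the paper leaves implicit.
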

\begin{proof}
By (\ref{EQ1.2025.03.10}), it is clear that $y\neq 0$. 
Now, set 
\[
\eta =\inf_{\lambda \in \cF_{[1_A]}(\Cu(A))} \lambda ([a] ) 
\quad
\mbox{and}
\quad
I =\overline{\Span (A\otimes \cK) y (A\otimes \cK)}.
\]
 Since $a$ is full, it follows from Lemma~\ref{ResiduallyAndFullEquivalent} that $\eta>0$. Now, we use this, (\ref{EQ1.2025.03.10}),  and 
Lemma~\ref{LemcFbigection} to get
\[
0<r \eta \leq  \widehat{[x]}  + r \cdot  \widehat{[a]}
 < 
\widehat{[y]}
\quad
\mbox{ on } \cF_{[1_A]} (\Cu(A)).
\]
This implies that 
$\inf_{\lambda \in \cF_{[1_A]}(\Cu(A))} \lambda  ([y])>0$. Then, by Lemma~\ref{ResiduallyAndFullEquivalent}, $y$ is  full.  
\end{proof}
For unital C*-algebra $A$ and a full positive element $a \in \cup_{n=1}^{\infty} M_n (A)_+$, one may ask about the relationship between 
$\Rrc(A, a)$ and $\rc(\Cu(A), [a])$. In particular, about the $r$-comparison of $(\Cu(A), [a])$ and $(\W(A), [a])$. We discuss this in the following lemma and the theorem that follows. 
\begin{lem}
\label{PropR1234}
Let $A$ be a unital residually stably finite C*-algebra and let $a \in \cup_{n=1}^{\infty} M_n (A)_+$ be full. 
Let $r\in (0, \infty)$. 
We set the following properties of $r$:
\begin{itemize}
\item[$R1$]
 If $x, y \in \cup_{n=1}^{\infty} M_n (A)_+$
satisfy 
$
\widehat{[x]}  + r \cdot \widehat{[a]}
 <
\widehat{ [y]}$
 on $\cF_{[1_A]} (\Cu(A))$,
then $x \precsim_A y$.
\item[$R2$]
 If $x, y \in \cup_{n=1}^{\infty} M_n (A)_+$
satisfy 
$
\widehat{[x]}  + r 
 <
\widehat{[y]}$
 on $\cF_{[a]} (\Cu(A))$,
then $x \precsim_A y$.
\item[$R3$]
 If $x, y \in (A \otimes \cK)_+$
satisfy 
$
\widehat{ [x]}  + r \cdot \widehat{[a]}
 <
\widehat{[y]}$
 on $\cF_{[1_A]} (\Cu(A))$,
then $x \precsim_A y$.
\item[$R4$]
 If $x, y \in (A \otimes \cK)_+$
satisfy 
$
\widehat{ [x]}  + r 
 \leq
\widehat{[y]}$
 on $\cF (\Cu(A))$,
then $x \precsim_A y$.
\end{itemize}
Then:
\begin{enumerate}
\item
\label{PropR1234.a}
$r$ satisfies $R1$, $R2$, and $R3$ if and only if any one of them holds.
\item
\label{PropR1234.b}
If $r$ satisfies $R4$, then $r$ satisfies $R3$ (and hence R2 and R3). 
\item
\label{PropR1234.c}
If $r$ satisfies $R3$, then $r+\ep$ satisfies R4 for every $\ep>0$.
\end{enumerate} 
\end{lem}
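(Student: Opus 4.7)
The plan is to handle the three parts in order using the bijection from Lemma~\ref{LemcFbigection}(\ref{LemcFbigection.b}), the matrix truncations by the projections $p_n := 1_A \otimes \sum_{i=1}^n e_{ii}$ (the unit of $M_n(A)$ viewed in $\cM(A \otimes \cK)$), and the strict positivity $\eta := \inf_{\lambda \in \cF_{[1_A]}(\Cu(A))} \widehat{[a]}(\lambda) > 0$ provided by Lemma~\ref{ResiduallyAndFullEquivalent}. I will read $R4$ as the standard BRTW condition $\widehat{[x]} + r \cdot \widehat{[a]} \leq \widehat{[y]}$ on $\cF(\Cu(A))$, matching Definition~\ref{DefRcCuAlgebraic}.

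For part~(\ref{PropR1234.a}), the equivalence $R1 \Leftrightarrow R2$ is a direct translation under the bijection $\varphi \colon \lambda \mapsto \mu = \lambda/\lambda([a])$: multiplying $\mu([x]) + r < \mu([y])$ by $\lambda([a])$ yields $\lambda([x]) + r\lambda([a]) < \lambda([y])$, and conversely. The implication $R3 \Rightarrow R1$ is immediate. For the substantive direction $R1 \Rightarrow R3$, fix $x, y \in (A \otimes \cK)_+$ satisfying the strict inequality on $\cF_{[1_A]}$ and any $\epsilon' > 0$. Pick $n$ with $\|p_n x p_n - x\| < \epsilon'$ and set $\tilde x := p_n x p_n \in M_n(A)_+$; then $(x - \epsilon')_+ \precsim \tilde x$ by perturbation, and $\tilde x = (x^{1/2} p_n)^*(x^{1/2} p_n) \sim_{\mathrm{Cu}} x^{1/2} p_n x^{1/2} \leq x$ gives $[\tilde x] \leq [x]$. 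Let $y_m := p_m y p_m \in M_m(A)_+$; the sequence $([y_m])$ is increasing with $[y_m] \leq [y]$ and $[y] = \sup_m [y_m]$, so $\widehat{[y_m]} \nearrow \widehat{[y]}$ pointwise on $\cF_{[1_A]}$. Since $\tilde x$, $y_m$ and $a$ all lie in matrix algebras over $A$, the maps $\widehat{[\tilde x]} + r \widehat{[a]}$ and $\widehat{[y_m]}$ are weak-$*$ continuous and finite-valued on $\cF_{[1_A]}(\Cu(A))$, which is compact via Remark~\ref{rmkHandelElliott} and compactness of $\QT_2^1(A)$. The increasing open sets $U_m := \{\lambda : \widehat{[\tilde x]}(\lambda) + r\widehat{[a]}(\lambda) < \widehat{[y_m]}(\lambda)\}$ cover $\cF_{[1_A]}$ by the pointwise inequality (using $\widehat{[\tilde x]} \leq \widehat{[x]}$), so compactness produces some $m_0$ with $U_{m_0} = \cF_{[1_A]}$. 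Applying $R1$ to $\tilde x, y_{m_0}$ gives $\tilde x \precsim y_{m_0} \precsim y$, so $(x - \epsilon')_+ \precsim y$; letting $\epsilon' \to 0$ and invoking Lemma~\ref{PhiB.Lem.18.4}(\ref{PhiB.Lem.18.4.11}) yields $x \precsim y$.

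For part~(\ref{PropR1234.b}), assuming $R4$ and given the strict inequality on $\cF_{[1_A]}$, I would derive the $R4$-hypothesis on all of $\cF(\Cu(A))$ by case analysis on $\lambda([1_A])$: any $\lambda$ with $\lambda([1_A]) \in (0, \infty)$ rescales to $\mu = \lambda/\lambda([1_A]) \in \cF_{[1_A]}$, and the strict inequality at $\mu$ rescales back to $\leq$ at $\lambda$; fullness of $[1_A]$ forces $\lambda \equiv 0$ when $\lambda([1_A]) = 0$, making the inequality trivial; and when $\lambda([1_A]) = \infty$, fullness of $a$ forces $\lambda([a]) = \infty$, while the strict inequality combined with $\widehat{[a]} \geq \eta$ gives $\inf_{\cF_{[1_A]}} \widehat{[y]} \geq r\eta > 0$, so Lemma~\ref{ResiduallyAndFullEquivalent} makes $y$ full and $\lambda([y]) = \infty$. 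In every case the inequality holds, and $R4$ supplies $x \precsim y$.

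For part~(\ref{PropR1234.c}), given the $R4$-hypothesis for $r + \epsilon$ on $\cF(\Cu(A))$ and any $\epsilon' > 0$, I again truncate to $\tilde x := p_n x p_n \in M_n(A)_+$ with $(x - \epsilon')_+ \precsim \tilde x$. For $\lambda \in \cF_{[1_A]}$, $\widehat{[\tilde x]}(\lambda) \leq \widehat{[x]}(\lambda)$ yields $\widehat{[\tilde x]}(\lambda) + r\widehat{[a]}(\lambda) \leq \widehat{[y]}(\lambda) - \epsilon\widehat{[a]}(\lambda)$, which is strictly less than $\widehat{[y]}(\lambda)$: when $\widehat{[y]}(\lambda) < \infty$ via $\epsilon\widehat{[a]}(\lambda) \geq \epsilon\eta > 0$, and when $\widehat{[y]}(\lambda) = \infty$ automatically, because $\tilde x \in M_n(A)$ makes the LHS finite. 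Thus the $R3$-hypothesis holds for $\tilde x, y$, giving $\tilde x \precsim y$ and hence $(x - \epsilon')_+ \precsim y$; $\epsilon' \to 0$ and Rørdam's lemma give $x \precsim y$. The main obstacle is the compactness step in part~(\ref{PropR1234.a}): upgrading the pointwise strict inequality to a uniform one among the matrix truncations $y_m$ of $y$ crucially uses the weak-$*$ compactness of $\cF_{[1_A]}(\Cu(A))$, with finite-valuedness and continuity of the relevant rank functions resting on $\tilde x$, $y_m$, and $a$ living in matrix algebras over $A$.
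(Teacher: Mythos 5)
Your treatment of $R1\Leftrightarrow R2$ (rescaling via Lemma~\ref{LemcFbigection}), of part~(\ref{PropR1234.b}) (case analysis on $\lambda([1_A])$, using fullness of $a$ and Lemma~\ref{ResiduallyAndFullEquivalent} to handle $\lambda([1_A])=\infty$), and of part~(\ref{PropR1234.c}) (absorbing $\ep\cdot\widehat{[a]}\geq\ep\eta>0$ to turn $\leq$ into $<$, then applying $R3$ and R{\o}rdam's lemma) all match the paper's proof in substance, and your reading of $R4$ with $r\cdot\widehat{[a]}$ rather than $r$ is the correct one (the literal statement would make the $R4$-hypothesis vacuous).

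However, there is a genuine gap in your proof of $R1\Rightarrow R3$. Your compactness argument needs the sets $U_m=\{\lambda:\widehat{[\tilde x]}(\lambda)+r\widehat{[a]}(\lambda)<\widehat{[y_m]}(\lambda)\}$ to be open, and for that you assert that $\widehat{[\tilde x]}+r\widehat{[a]}$ is weak-$*$ continuous on $\cF_{[1_A]}(\Cu(A))$ because $\tilde x$ and $a$ lie in matrix algebras over $A$. That is false: for $b$ in a matrix algebra, $\lambda\mapsto\lambda([b])=d_\tau(b)=\sup_n\tau(b^{1/n})$ is a supremum of continuous functions, hence only \emph{lower} semicontinuous (already for $A=C[0,1]$ and $b(t)=t$ the map $\tau\mapsto d_\tau(b)$ jumps at the point evaluation at $0$). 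Membership in a matrix algebra gives finiteness of the rank, not continuity. Since the left-hand side is only lower semicontinuous, $U_m$ need not be open, and the finite-subcover step collapses; this is precisely the delicate point of the lemma. The paper's proof circumvents it by replacing the left-hand side with a genuinely continuous majorant: it picks a cutoff $g$ with $g(x)(x-\ep)_+=(x-\ep)_+$, so that $d_\tau((x-\ep)_+)\leq\tau(g(x))$ with $\tau\mapsto\tau(g(x))$ continuous (Lemma~\ref{LemTrace}), and bounds $r\,d_\tau(a)$ by the constant $rk$ with $a\in M_k(A)$; it then applies a Dini-type lemma (Lemma~6.13 of \cite{Ph14}) to the increasing sequence of lower semicontinuous functions $\tau\mapsto d_\tau((y-\tfrac1n)_+)$, for which the relevant sets \emph{are} open because a continuous function is being dominated. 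Your truncations $p_m y p_m$ could replace $(y-\tfrac1n)_+$ here, but you must insert a continuous majorant of $\widehat{[(x-\ep')_+]}+r\widehat{[a]}$ before any compactness argument can close.
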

\begin{proof}
Let $r \in (0, \infty)$. 
We first prove the implication  ``$r$ satisfies $R1 \Rightarrow r$ satisfies $R2$". Let $x, y \in \cup_{n=1}^{\infty} M_n (A)_+$
satisfy 
$
\widehat{[x]}  + r 
 <
\widehat{[y]}$
 on
$\cF_{[a]} (\Cu(A))$. 
For any $\lambda \in \cF_{[1_A]} (\Cu(A))$, by Lemma~\ref{LemcFbigection}, we have $\tfrac{1}{\lambda([a])} \cdot \lambda \in \cF_{[a]}(\Cu(A))$. Using this, we get
\begin{equation}\label{Eq3.2025.03.10}
\tfrac{1}{\lambda([a])} \cdot \lambda([x]) + r < \tfrac{1}{\lambda([a])}\cdot  \lambda ([y])
\quad
\mbox{for all } \lambda \in \cF_{[1_A]} (\Cu(A)).
\end{equation}
Multiply both sides of (\ref{Eq3.2025.03.10}) by $\lambda([a])$ to get
$
 \widehat{ [x] } + r \cdot \widehat{ [a]} < \widehat{[y]}
 $
 on
$\cF_{[1_A]} (\Cu(A))$.
Since $r$ satisfies $R1$, it follows that $x \precsim_A y$, as desired. 

The proof of the implication ``$r$ satisfies $R2 \Rightarrow r$ satisfies $R1$" is similar to the previous part, using the same method and Lemma~\ref{LemcFbigection}. 

Second, we prove ``$r$ satisfies $R3 \Leftrightarrow r$ satisfies $R1$". The forward implication is trivial. So, we prove the implication 
``$r$ satisfies $R1 \Rightarrow r$ satisfies $R3$". Let
$x, y \in (A \otimes \cK)_+$
satisfy 
\begin{equation}
\label{Eq6.2025.03.10}
\widehat{[x]} + r \cdot \widehat{[a]}
 <
\widehat{[y]} 
\quad \mbox{  on }  \cF_{[1_A]} (\Cu(A)).
\end{equation}
We may assume that $\| x\| \leq 1 $.
 Let $\ep \in (0, \infty)$.  By Lemma~\ref{PhiB.Lem.18.4}(\ref{PhiB.Lem.18.4.11}), it suffices to show that there exists $\dt>0$ such that $(x - \ep)_+ \precsim_A (y-\dt)_+$. 
Define the continuous function $g \colon [0, \infty) \to [0, 1]$ give by 
\[
g(t) =
\begin{cases} 
\ep^{-1} t, & \text{if } 0\leq t < \ep \\
1, & \text{if } t \geq \ep.
\end{cases}
 \]
 Since $g(x) (x-\ep)_+ =(x-\ep)_+$, it follows 
 from Lemma~\ref{LemTrace} that,
 for every $\tau \in \QT_{2}^{1} (A)$, 
 \begin{equation}\label{Eq1.2025.04.09}
 d_{\tau} ((x- \ep)_+) \leq \tau (g(x)) \leq d_{\tau} (x). 
\end{equation}
 Using this, (\ref{Eq6.2025.03.10}), and Remark~\ref{rmkHandelElliott}, we get
 \begin{equation*}
 \tau ( g(x) ) + r \cdot d_{\tau} (a)
 <
d_{\tau} (y) \quad \mbox{ for all } \tau \in \QT_{2}^{1} (A).
\end{equation*} 
 We choose $k \in \N$ such that $a \in M_k (A)$. For all $\tau \in \QT_{2}^{1} (A)$, 
 we have 
\begin{equation}\label{Eq2.2025.04.09}
 d_{\tau} (a) \leq k.
 \end{equation} 
 Also the map $\tau \mapsto \tau (g(x)) + rk$ is a continuous map from $\QT_{2}^{1} (A)$ to $[0, \infty)$. 
 For every $n\in \N$, we know that
  $(y - 1/n)_+ \sim c$ where $c \in M_{n_0} (A)$ for some m in $n_0 \in \N$. So, $c \leq \| c\| \cdot 1_{M_{n_0} (A)}$. Therefore
 \[
 d_\tau ((y - 1/n)_+) = d_\tau (c) \leq d_\tau (1_{M_{n_0} (A)}) = n_0<\infty
 \quad
 \mbox{for all }\tau \in \QT_{2}^{1} (A).
 \]
 Now, for every $n\in \N$, define
  $f_n \colon \QT_{2}^{1} (A) \to [0, \infty)$ by
  $f_n (\tau) = d_{\tau} ((y - \tfrac{1}{n})_+)$. 
  We know that 
  $(f_n)_{n=1}^{\infty}$ is an increasing sequence of lower semicontinuous functions,  and, for each $\tau \in \QT_{2}^{1} (A)$,
  we have $\lim_{n \to \infty} d_{\tau} ((y - \tfrac{1}{n})_+) = d_{\tau} (y)$. 
  Now, we use 
  Lemma~6.13 of \cite{Ph14} to choose 
  $n \in \N$ such that 
  \begin{equation}\label{Eq3.2025.04.03}
  \tau (g(x)) + rk < d_{\tau} ((y - \tfrac{1}{n})_+)
\quad
\mbox{for all } \tau \in \QT_{2}^{1} (A).
 \end{equation}  
   Therefore, for all $\tau \in \QT_{2}^{1} (A)$,
  \[
   d_{\tau} ((x- \ep)_+) + r \cdot d_{\tau} (a)
   \underset{\text{(\ref{Eq1.2025.04.09}) and (\ref{Eq2.2025.04.09}) }}{\leq}
     \tau (g(x)) + rk 
 \underset{\text{(\ref{Eq3.2025.04.03})}}{<}    
     d_{\tau} ((y - \tfrac{1}{n})_+).
  \]
 So
 $\widehat{[(x- \ep)_+]} + r \cdot \widehat{[a]} <  \widehat{[(y - \tfrac{1}{n})_+]}$
   on $\cF_{[1_A]} (\Cu(A))$. 
 Using  this and the fact that $r$ satisfies $R3$, we have $(x- \ep)_+ \precsim (y - \tfrac{1}{n})_+$. Now, put $\dt= \tfrac{1}{n}$. This complete the proof of this part. 

Now, we prove (\ref{PropR1234.b}). So let $r>0$ satisfy $R4$ and let
$x, y \in (A \otimes \cK)_+$
satisfy 
\begin{equation}
\label{Eq12.2025.03.10}
\lambda ( [x] ) + r \cdot \lambda ([a])
 <
\lambda( [y]) \quad
\mbox{for all } \lambda \in \cF_{[1_A]} (\Cu(A)).
\end{equation}
This implies that
\begin{equation}
\label{Eq10.2025.03.10}
\lambda ( [x] ) + r \cdot \lambda ([a])
 <
\lambda( [y])
\quad
\mbox{
for all } \lambda \in \cF (\Cu(A)) 
\mbox{ with } 0<\lambda ([1_A])<\infty.
\end{equation}
For $\lambda \in \cF(\Cu(A))$ with 
$\lambda ([1_A]) =0$, by Lemma~\ref{LemcFbigection}(\ref{LemcFbigection.a}), we have $\lambda ([a])=0$, hence $\lambda \equiv 0$. 
Now, suppose that $\lambda \in \cF (\Cu(A))$
with $\lambda([1_A])=\infty$.
Since $y$ satisfies (\ref{Eq12.2025.03.10}) and  $A$ is residually stably finite, it follows from (\ref{PropR1234.a}) and Lemma~\ref{LemFullRrc} that $y$ is full and, therefore, $\lambda ([y])=\infty$. 
Putting these together, we get
$
\widehat{ [x] } + r \cdot \widehat{[a]}
 \leq
\widehat{[y]}$ 
 on   $\cF (\Cu(A))$. 
Since $r$ satisfies  $R4$, it follows that $x \precsim y$, as desired.

Finally, we prove (\ref{PropR1234.c}). 
So  let
$x, y \in (A \otimes \cK)_+$
satisfy 
\begin{equation}
\label{Eq7.2025.03.10}
\widehat{[x]} + (r+\ep) \cdot \widehat{[a]}
 \leq 
\widehat{[y]} \quad \mbox{  on }  \in \cF (\Cu(A)).
\end{equation}
Let $\ep>0$. It suffices to show that $(x-\ep)_+ \precsim y$. By (\ref{Eq7.2025.03.10}), we have 
\begin{equation}
\label{Eq8.2025.03.10}
\lambda ( [x] ) + (r+\ep) \cdot \lambda ([a])
 \leq
\lambda( [y]) \quad \mbox{ for all } \lambda \in \cF_{[1_A]} (\Cu(A)).
\end{equation}
Since $\lambda ( [(x-\ep)_+] ), \lambda ( [a] )< \infty$, it follows from (\ref{Eq8.2025.03.10}) that 
\[
\lambda ( [(x-\ep)_+ ) + r \cdot \lambda ([a])
 <
\lambda( [y]) \quad \mbox{ for all } \lambda \in \cF_{[1_A]} (\Cu(A)).
\]
Since $r$ satisfies $R3$, we get $(x-\ep)_+ \precsim y$, as desired. 

\end{proof}
In Subsection~3.1 of \cite{BRTW12},  where an algebraic equivalent for
 the conventional radius of comparison is given, the reason provided there
 seems not to be correct. This issue was addressed 
 for unital simple stably finite C*-algebra in Proposition~6.12 \cite{Ph14}. However, there seems to be a misprint in the proof of that proposition as well. 
 So we encourage the reader to take 
 $a = 1_A$ and follow
 the proof of
 Lemma~\ref{PropR1234}(\ref{PropR1234.a}), 
 as this provides a revised explanation for the issue. 
 
Putting Lemma~\ref{PropR1234}, Definition~\ref{RrcDf}, and Definition~\ref{DefRcCuAlgebraic} together, we get the following theorem as promised. 
\begin{thm}\label{CorRrc=rc}
Let $A$ be a unital residually stably finite C*-algebra and let $a \in \cup_{n=1}^{\infty} M_n (A)_+$ be full. Then
$\Rrc(A, a) = \rc (\Cu (A), [a])$. 
\end{thm}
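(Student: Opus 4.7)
The plan is to read off $\Rrc(A,a)$ and $\rc(\Cu(A),[a])$ as infima over two distinct but closely related sets of ``$r$-comparison'' properties, and then to invoke \Lem{PropR1234} to see that those infima agree. More precisely, matching \Def{RrcDf} against the statement of \Lem{PropR1234} identifies $\Rrc(A,a)$ with $\inf\{r>0: r \ \textrm{satisfies} \ R2\}$, while unfolding \Def{DefRcCuAlgebraic} at $S = \Cu(A)$ and $w = [a]$---and using that every element of $\Cu(A)$ is represented by some $u \in (A \otimes \cK)_{+}$ with Cuntz subequivalence as the order relation---identifies $\rc(\Cu(A),[a])$ with $\inf\{r>0 : r \ \textrm{satisfies} \ R4\}$.

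For the inequality $\Rrc(A,a) \leq \rc(\Cu(A),[a])$, I would argue as follows. By \Lem{PropR1234}(\ref{PropR1234.b}), every $r>0$ satisfying $R4$ also satisfies $R3$, and hence $R2$ by part (\ref{PropR1234.a}). The set of admissible $r$'s for $R4$ is therefore contained in the set of admissible $r$'s for $R2$, and taking infima yields the desired inequality.

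For the reverse inequality $\rc(\Cu(A),[a]) \leq \Rrc(A,a)$, given any $r>0$ satisfying $R2$, part (\ref{PropR1234.a}) upgrades this to $R3$, and then part (\ref{PropR1234.c}) tells us that $r + \ep$ satisfies $R4$ for every $\ep > 0$. Hence $\rc(\Cu(A),[a]) \leq r + \ep$ for every $\ep > 0$, so $\rc(\Cu(A),[a]) \leq r$; taking the infimum over all such $r$ completes the argument.

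At the level of this theorem the main difficulty is essentially bookkeeping: the heavy lifting---namely the passage between normalized and arbitrary functionals, the role of residual stable finiteness in ruling out pathological functionals, and the $(x - \ep)_{+}$ trick that reconciles Cuntz subequivalence with rank inequalities---is already packaged in \Lem{PropR1234}. The one point that still requires care is the asymmetry between the strict inequalities in $R2$ and $R3$ and the non-strict inequality appearing in $R4$ and in \Def{DefRcCuAlgebraic}; this is precisely what the $+\ep$ passage in part (\ref{PropR1234.c}) absorbs, which is why the reverse inequality requires slightly more than a trivial set-inclusion.
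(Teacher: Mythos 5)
Your proposal is correct and is essentially the paper's own argument: the paper proves this theorem by the single line ``Putting Lemma~\ref{PropR1234}, Definition~\ref{RrcDf}, and Definition~\ref{DefRcCuAlgebraic} together,'' and your write-up simply makes explicit the identification of $\Rrc(A,a)$ with $\inf\{r>0: r \text{ satisfies } R2\}$ and of $\rc(\Cu(A),[a])$ with $\inf\{r>0: r \text{ satisfies } R4\}$, together with the $+\ep$ step needed because $R2$/$R3$ use strict inequalities while $R4$ does not. No gaps.
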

In the above theorem if $a =1_A$, then it is Proposition~3.2.3 of \cite{BRTW12}. 

We conclude this section by the following remark. 
\begin{rmk}\label{RmkRCHeredit}
The relative radius of comparison provides  us the relative radius of comparison of full hereditary C*-subalgebras as well. Namely,
for a full positive element $a$ in a C*-algebra $A \otimes \cK$, we know that  the hereditary C*-subalgebra of $A\otimes \cK$ generated by $a$, i.e., 
$\overline{a (A \otimes \cK) a}$,   is  also full.  So 
 $\rc (\Cu \big(\overline{a (A \otimes \cK) a}\big), [a])= \rc (\Cu (A), [a])$. 
 
\end{rmk}

\section{The Rank Ratio Function}
In this section, 
we give the definition of the rank ratio function, associated to a Cuntz semigroup, and its normalized version of it. Then we study the most important  properties of them and their relationships.  

Here is the definition of the rank ratio function associated to a $\Cu$-semigroup. 
\begin{dfn}
\label{rk_rDef}
Let $S$ be a $\Cu$-semigroup and let $x, y \in S$.
Define
\[
\cR(\widehat{x}, \widehat{y})= \big\{ r \in (0, \infty) \colon \widehat{x} \leq r\cdot \widehat{y} \big\}.
\] 
\begin{enumerate}
\item
The \emph{rank ratio of $(x, y)$}, denoted by $\rho (x, y)$, is 
\[
\rho (x, y)= 
 \inf \cR(\widehat{x}, \widehat{y}).
 \]
 If there is no such $r$, then $\rho(x, y)=\infty$.
 \item 
The \emph{rank ratio function} is the function
$\rho \colon S \times S \to [0, \infty]$ given by $(x, y) \mapsto \rho(x, y)$.
\end{enumerate}
\end{dfn}
The rank ratio function at $(x, y)$ provides us the smallest $r$ in $(0, \infty)$ that ensures a global control of the rank of $x$ relative to rank of $y$. Such $r$ does not always exist (see Lemma~\ref{lemRhoInvese}). 

For a $\Cu$-semigroup $S$ and $z\in \Fu (S)$, we denote by
$S_{\ll \infty \cdot z}$ the set of all $x \in S$ with 
$x \ll \infty \cdot z$. In Definition~\ref{rk_rDef} if we only consider the normalized functionals on $S$, then
this gives us the normalized rank ratio function associated to a $\Cu$-semigroup. 
\begin{dfn}
\label{rk_r_0Def}
Let $S$ be a $\Cu$-semigroup, let $z \in \Full(S)$, and let $\cF_z(S) \neq \emptyset$. For any $x, y \in S$ with $x, y \ll \infty\cdot z$. 
Define
\[
\cR_{z} (\widehat{x}, \widehat{y})=
\left\{ r \in (0, \infty) \colon \widehat{x} \leq r\cdot \widehat{y} \quad \mbox{ on} \ \cF_{z}(S) \right\}.
\]
\begin{enumerate}
\item
The \emph{normalized rank ratio of $(x, y)$}, denoted by $\rho_z (x, y)$, is 
\[
\rho_z (x, y)= 
 \inf \cR_{z} (\widehat{x}, \widehat{y}).
\]
 If there is no such $r$, then $\rho_z(x, y)=\infty$. 
 \item
The \emph{normalized rank ratio function} is the function
$\rho_z \colon S_{\ll \infty \cdot z} \times S_{\ll \infty \cdot z} \to [0, \infty]$ given by $(x, y) \mapsto \rho_z(x, y)$.
\end{enumerate}
\end{dfn}

The following lemma provides a general relationship between 
$\rho$ and its normalized version. 
\begin{lem}\label{rhorho0}
Let $S$ be a $\Cu$-semigroup, let $z \in \Full(S)$, and let $\cF_z(S) \neq \emptyset$. Let $x, y \in S$ with $x, y \ll \infty\cdot z$. 
Then
$\rho_z (x, y) \leq \rho(x, y)$.
\end{lem}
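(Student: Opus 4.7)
The proof plan is essentially a one-line observation about the inclusion of constraint sets, followed by the usual fact that taking infimum over a larger set produces a smaller number. Let me spell out how I would organize it.

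First I would unpack the two definitions side by side. By Definition~\ref{rk_rDef}, $\cR(\widehat{x}, \widehat{y})$ consists of those $r \in (0, \infty)$ such that the inequality $\widehat{x} \leq r \cdot \widehat{y}$ holds on \emph{all} of $\cF(S)$, while by Definition~\ref{rk_r_0Def}, $\cR_z(\widehat{x}, \widehat{y})$ consists of those $r \in (0, \infty)$ such that the same inequality holds only on $\cF_z(S)$. Since $\cF_z(S) \subseteq \cF(S)$, any $r$ satisfying the global inequality automatically satisfies the restricted one, so
\[
\cR(\widehat{x}, \widehat{y}) \subseteq \cR_z(\widehat{x}, \widehat{y}).
\]

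Next I would split into the two standard cases to take care of the empty-set convention. If $\cR(\widehat{x}, \widehat{y}) = \emptyset$, then $\rho(x, y) = \infty$ by definition, so $\rho_z(x, y) \leq \rho(x, y)$ holds trivially. Otherwise pick any $r \in \cR(\widehat{x}, \widehat{y})$; by the inclusion above, $r \in \cR_z(\widehat{x}, \widehat{y})$, so $\cR_z(\widehat{x}, \widehat{y}) \neq \emptyset$ and taking infimum over the larger set gives a smaller value:
\[
\rho_z(x, y) = \inf \cR_z(\widehat{x}, \widehat{y}) \leq \inf \cR(\widehat{x}, \widehat{y}) = \rho(x, y).
\]

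There is no real obstacle here; the only thing worth being careful about is that the hypotheses $x, y \ll \infty \cdot z$ and $\cF_z(S) \neq \emptyset$ are needed merely so that $\rho_z(x, y)$ is well-defined in the sense of Definition~\ref{rk_r_0Def}, and the argument above does not need either hypothesis for the comparison itself. The proof is genuinely a one-liner, so I would keep the write-up short and not dress it up.
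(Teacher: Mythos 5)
Your proof is correct and is essentially the paper's argument: the paper phrases it as an $\ep$-approximation of the infimum (choose $r \in \cR(\widehat{x},\widehat{y})$ with $r < \rho(x,y)+\ep$, note the inequality restricts from $\cF(S)$ to $\cF_z(S)$), while you state the underlying set inclusion $\cR(\widehat{x},\widehat{y}) \subseteq \cR_z(\widehat{x},\widehat{y})$ directly and compare infima. The difference is purely cosmetic, and your handling of the empty-set case matches the paper's treatment of $\rho(x,y)=\infty$.
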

\begin{proof}
If $\rho(x, y)= \infty$, then the inequality is trivially true. 
So, assume that  $\rho(x, y)<\infty$. 
For every $\ep>0$, we choose $r>0$ in $\cR(\widehat{x}, \widehat{y})$ such that 
$r < \rho(x, y)+\ep$. So we have 
$\widehat{x} \leq r\cdot \widehat{y}$   on $\cF(S)$, therefore, on $\cF_z (S)$. 
This implies that  $\rho_z (x, y) \leq r < \rho(x, y)+ \ep$. Therefore, for every 
$\ep >0$, we have 
$\rho_z (x, y) \leq \rho(x, y)+ \ep$.  
\end{proof}

The following lemma provides some basic facts about the rank ratio function. 
\begin{lem}\label{RkRatioProperty}
Let $S$ be a $\Cu$-semigroup and  let $x, y \in S$ with $x\leq y$. 
Then:
\begin{enumerate}
\item\label{RkRatioProperty.a_0}
If $x=0$ and $y \neq 0$,  then 
$\rho (x, y)= 0$ and $\rho(y, x)=\infty$. 
 \item\label{RkRatioProperty.b}
 $\rho(x, y) \leq 1$.
If further, $\cF_y(S) \neq \emptyset$ and $x \in \Full (S)$, then 
 $0<\rho(x, y) \leq 1$. 
 \item\label{RkRatioProperty.c}
 If $\cF_x(S) \neq \emptyset$,   then 
 $\rho(y, x)\geq 1$. 
 \item\label{RkRatioProperty.e}
 If 
$\rho(y, x)=1$, then $\widehat{x} = \widehat{y}$. 
\item\label{RkRatioProperty.f}
Assume that $x \in \Full(S)$, further $\cF_{x} (S) \neq \emptyset$,  and  $y \ll \infty \cdot x$, then 
$1\leq \rho(y, x)\leq k$ for some $k \in \N$. 
\end{enumerate}
\end{lem}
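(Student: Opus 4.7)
The plan is to unpack each clause directly from the definition $\rho(x, y) = \inf \cR(\widehat{x}, \widehat{y})$, using only three ingredients: the pointwise monotonicity $\lambda(x) \leq \lambda(y)$ coming from the assumed $x \leq y$, the trivial functional $\lambda_{\infty}$ from the preliminaries, and the algebraic characterization of fullness stating that $\infty \cdot x$ is the largest element of $S$.

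Parts~(\ref{RkRatioProperty.a_0}), (\ref{RkRatioProperty.c}), (\ref{RkRatioProperty.f}), and the inequality $\rho(x, y) \leq 1$ in~(\ref{RkRatioProperty.b}) are essentially one-line. For~(\ref{RkRatioProperty.a_0}), $x = 0$ makes $\widehat{x} \equiv 0$, so every $r \in (0, \infty)$ lies in $\cR(\widehat{x}, \widehat{y})$ and $\rho(x, y) = 0$; testing the reverse with $\lambda_{\infty}$ gives $\lambda_{\infty}(y) = \infty$ while $r \cdot \lambda_{\infty}(x) = 0$, so $\cR(\widehat{y}, \widehat{x}) = \E$ and $\rho(y, x) = \infty$. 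For the upper bound in~(\ref{RkRatioProperty.b}) and for~(\ref{RkRatioProperty.c}), $x \leq y$ gives $\widehat{x} \leq \widehat{y}$ pointwise, so $1 \in \cR(\widehat{x}, \widehat{y})$; and for any $\lambda \in \cF_x(S)$, $\lambda(y) \geq \lambda(x) = 1$ forces every element of $\cR(\widehat{y}, \widehat{x})$ to be at least $1$. Part~(\ref{RkRatioProperty.f}) follows because $y \ll \infty \cdot x = \sup_n n \cdot x$ produces, by the definition of compact containment applied to the increasing sequence $(n \cdot x)_{n \in \N}$, some $k \in \N$ with $y \leq k \cdot x$, whence $k \in \cR(\widehat{y}, \widehat{x})$ and $\rho(y, x) \leq k$; the matching lower bound is just~(\ref{RkRatioProperty.c}).

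The two slightly more delicate items are the strict positivity assertion of~(\ref{RkRatioProperty.b}) and part~(\ref{RkRatioProperty.e}). For the former I would argue by contradiction: if $\rho(x, y) = 0$, then $\widehat{x} \leq \ep \cdot \widehat{y}$ for every $\ep > 0$, so evaluating at any $\lambda \in \cF_y(S)$ forces $\lambda(x) = 0$; but fullness of $x$ together with the fact that $\infty \cdot x$ is the largest element of $S$ yields $y \leq \infty \cdot x$, hence $1 = \lambda(y) \leq \lambda(\infty \cdot x) = \sup_n n \lambda(x) = 0$, a contradiction. For~(\ref{RkRatioProperty.e}), $\rho(y, x) = 1$ means that for every $\ep > 0$ some $r \in \cR(\widehat{y}, \widehat{x})$ satisfies $r \leq 1 + \ep$, so $\widehat{y} \leq (1 + \ep) \cdot \widehat{x}$; letting $\ep \to 0^+$ pointwise (treating the cases $\lambda(x) < \infty$ and $\lambda(x) = \infty$ separately) yields $\widehat{y} \leq \widehat{x}$, which combined with $\widehat{x} \leq \widehat{y}$ from $x \leq y$ gives $\widehat{x} = \widehat{y}$.

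The only real obstacle is the strict positivity step in~(\ref{RkRatioProperty.b}): one must translate the purely algebraic fullness of $x$ into the analytic inequality $\lambda(y) \leq \sup_n n \lambda(x)$ and then invoke the convention $0 \cdot \infty = 0$ inside the evaluation of functionals to close the contradiction. Beyond that, each part reduces to bookkeeping with infima and the basic monotonicity of $\lambda$.
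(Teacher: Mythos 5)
Your proof is correct and follows essentially the same route as the paper: each part is unpacked from the definition of $\rho$ using $\lambda_{\infty}$, the pointwise inequality $\widehat{x}\leq\widehat{y}$, and compact containment in $\infty\cdot x$. The only difference is in the strict positivity claim of part~(\ref{RkRatioProperty.b}), where the paper cites Lemma~\ref{Rkiszero} while you inline the same argument directly (which also sidesteps having to note that $y$ inherits fullness from $x$ before that lemma applies).
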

\begin{proof}
We prove (\ref{RkRatioProperty.a_0}).
Since $0\leq r \cdot \widehat{y}$  for every $r \in (0, \infty)$, it follows from the definition of $\rho$ that 
$\rho (x, y)=0$. To prove the second part, assume that 
$\rho (y, x) <\infty$. So, there exits $r>0$ in $\cR(\widehat{y}, \widehat{x})$.
Therefore,
$\lambda (y) \leq r \cdot \lambda (x)$ for all $\lambda \in \cF(S)$. Using this and taking $\lambda$ to be $\lambda_\infty$, we get 
$\infty=\lambda_\infty (y) \leq r \cdot \lambda_\infty (x)=0$.
This is a contradiction.  

Now, we prove (\ref{RkRatioProperty.b}).
Since $x \leq y$, we have $\lambda (x) \leq \lambda (y)$ for all $\lambda \in \cF(S)$. So
$\rho(x, y)\leq 1$. 
To prove the second part, 
assume $\rho(x, y)=0$. So, 
it follows from Lemma~\ref{Rkiszero} that $\cF_{y} (S)=\emptyset$. This is contradiction. 

We prove (\ref{RkRatioProperty.c}). 
If $\rho(y, x)=\infty$, the result is trivially true. so assume that 
$\rho (y, x) < \infty$. 
For every $r \in \cR(\widehat{y}, \widehat{x})$, we have 
\begin{equation}\label{Eq11.2025.04.09}
\lambda (y) \leq r \cdot \lambda (x)
\quad
\mbox{ for all } \lambda \in \cF(S).
\end{equation}
Since 
$\cF_x (S)\neq \emptyset$, choose $\lambda_0 \in \cF_x (S)$. Therefore, using $x \leq y$ at the second step,
\[
1= \lambda_0 (x) \leq \lambda_0 (y) 
\underset{\text{(\ref{Eq11.2025.04.09})}}{\leq}    
 r \cdot
\lambda_0 (x) = r. 
\]
This implies that $r\geq 1$ for all $r \in \cR(\widehat{y}, \widehat{x})$, and therefore $\rho(y, x) \geq  1$. 

We prove (\ref{RkRatioProperty.e}). 
Since $\rho(y, x)=1$,
for every $\ep>0$, we choose $r_{\ep}>0$ in $\cR(\widehat{y}, \widehat{x})$ such that $r_{\ep}< 1+\ep$. 
So, for every $\ep>0$, we have 
\begin{equation}\label{Eq22.2025.04.09}
\lambda (y) \leq (1+\ep)\cdot \lambda (x)
\mbox{ for all }  \lambda\in \cF(S).
\end{equation}
\begin{itemize}
\item
If $\lambda(x)=0$, then, by (\ref{Eq22.2025.04.09}), we have $\lambda(y)=0$.
\item
If $\lambda(x)=\infty$, then, by $x\leq y$, we have $\lambda (y)=\infty$.
\item
If $0<\lambda(x)<\infty$, then, by $x\leq y$, we have, for every $\ep>0$, 
\[
\lambda (x)\leq\lambda (y) \leq (1+\ep)\cdot \lambda (x).
\]
This implies that $\lambda (x)= \lambda (y)$. 
\end{itemize}
Therefore, $\lambda (x)= \lambda (y)$
for all $\lambda\in \cF(S)$. 

We prove (\ref{RkRatioProperty.f}). Since 
$y \ll \infty \cdot x = \sup_n n\cdot x$, we choose $k \in \N$ such that $y \leq k x$. So
$\lambda (y) \leq k \lambda (x)$
 for all  $\lambda \in \cF(S)$. Now, using this and 
 (\ref{RkRatioProperty.c}), we get 
 $1\leq \rho(y, x)\leq k$ for some $k \in \N$. 
\end{proof}
One may ask when $\rho(x, y)$ can be zero for $x, y \in \Fu(S)$. 
The following lemma answers this question. 
\begin{lem}
\label{Rkiszero}
Let $S$ be a $\Cu$-semigroup and let $x, y \in \Fu (S)$. Then the following are equivalent:
\begin{enumerate}
\item \label{Rkiszero.a}
$\rho(x, y)= 0$.  
\item\label{Rkiszero.c}
$\cF_{y} (S)= \emptyset$.
\end{enumerate}
\end{lem}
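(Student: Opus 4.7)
The plan is to run the two implications separately, using the fullness hypothesis on $x$ in one direction and on $y$ in the other. The main ingredient on both sides is that for a full element $e \in S$ and any $\lambda \in \cF(S)$, if $\lambda(e)=0$ then $\lambda \equiv 0$: indeed, for any $s \in S$ we have $s \leq \infty \cdot e = \sup_n n\cdot e$, whence
\[
\lambda(s) \leq \lambda(\infty\cdot e) = \sup_n n\cdot \lambda(e) = 0.
\]

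For the implication (\ref{Rkiszero.a})$\Rightarrow$(\ref{Rkiszero.c}), I will argue by contradiction. Suppose $\cF_y(S)\neq\emptyset$ and pick $\lambda\in\cF_y(S)$, so $\lambda(y)=1$. Since $\rho(x,y)=\inf\cR(\widehat{x},\widehat{y})=0$, for every $\ep>0$ there exists $r\in(0,\ep)$ with $\widehat{x}\leq r\cdot\widehat{y}$ on all of $\cF(S)$; evaluating at this particular $\lambda$ gives $\lambda(x)\leq r<\ep$, so $\lambda(x)=0$. But $x$ is full, so by the observation above $\lambda\equiv0$, contradicting $\lambda(y)=1$.

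For (\ref{Rkiszero.c})$\Rightarrow$(\ref{Rkiszero.a}), I first show that $\cF_y(S)=\emptyset$ forces $\lambda(y)\in\{0,\infty\}$ for every $\lambda\in\cF(S)$. If some $\lambda$ had $0<\lambda(y)<\infty$, then $\lambda'\colon s\mapsto \lambda(s)/\lambda(y)$ would again be a functional (scaling preserves additivity, order preservation, and suprema of increasing sequences) with $\lambda'(y)=1$, giving $\lambda'\in\cF_y(S)$, a contradiction. With this dichotomy in hand, fix any $r\in(0,\infty)$ and any $\lambda\in\cF(S)$:
\begin{itemize}
\item if $\lambda(y)=\infty$, then $r\cdot\lambda(y)=\infty\geq\lambda(x)$;
\item if $\lambda(y)=0$, then by fullness of $y$ we have $\lambda\equiv0$ (by the opening observation), so $\lambda(x)=0=r\cdot\lambda(y)$.
\end{itemize}
Therefore $\widehat{x}\leq r\cdot\widehat{y}$ on $\cF(S)$ for every $r>0$, i.e.\ $\cR(\widehat{x},\widehat{y})=(0,\infty)$, and hence $\rho(x,y)=0$.

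I expect no serious obstacle; the only place that could trip one up is the backward direction, where one must remember to use fullness of $y$ (not of $x$) to dispose of the case $\lambda(y)=0$, and must justify that rescaling a functional by a positive finite constant produces another functional (needed to get $\cF_y(S)\neq\emptyset$ from a $\lambda$ with $0<\lambda(y)<\infty$).
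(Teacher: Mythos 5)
Your proof is correct and follows essentially the same route as the paper: fullness of $x$ kills the chosen $\lambda\in\cF_y(S)$ in the forward direction, and the dichotomy $\lambda(y)\in\{0,\infty\}$ together with fullness of $y$ handles the backward direction. The only difference is cosmetic — you explicitly justify (via rescaling) that $\cF_y(S)=\emptyset$ forces $\lambda(y)\in\{0,\infty\}$, a step the paper asserts without proof.
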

\begin{proof}
We claim that 
$\rho(x, y)= 0$ if and only if,
for every $n \in \N$, we have
$\lambda (x) \leq \tfrac{1}{n} \cdot \lambda (y)$ for all $\lambda \in \cF(S)$.
We prove the forward implication.
Since $\rho(x, y)=0$, it follows from the definition of $\rho$ that for every $n \in \N$ there exists $r_n \in \cR(\widehat{x}, \widehat{y})$ such that $r_n < \frac{1}{n}$. So, we have 
$\lambda (x) \leq r_n \cdot \lambda (y) \leq \frac{1}{n} \cdot \lambda (y)$ for all $\lambda \in \cF(S)$. 
The backward implication, 
 follows immediately from the definition of $\rho(x, y)$. 

Now, we prove the implication ``(\ref{Rkiszero.a}) $\Rightarrow$ (\ref{Rkiszero.c})". Suppose that $\cF_{y} (S)\neq  \emptyset$. We choose $\lambda_0 \in \cF_{y}(S)$ to get
$\lambda_0 (x) \leq \frac{1}{n}$ for all $n \in \N$. So, $\lambda_0 (x)=0$ and, since $x$ is full, we have $\lambda_0 \equiv 0$. This contradicts 
$\lambda_0 \in \cF_{y}(S)$. 

We prove implication  ``(\ref{Rkiszero.c})  
$\Rightarrow$ (\ref{Rkiszero.a})".
Since $\cF_{y}(S) = \emptyset$, we have 
$\lambda (y) \in \{0, \infty\}$ for all $\lambda \in \cF(S)$. 
\begin{enumerate}
\item[Case 1]
 If $\lambda(y)=0$, then, by $y \in \Fu(S)$, we have $\lambda(x)=0$. 
\item[Case 2]
 If $\lambda(y)=\infty$, then, for every $n \in \N$, we have $\frac{1}{n} \cdot \lambda (y)= \infty$. So, $\lambda(x) \leq \frac{1}{n} \cdot \lambda (y)$.
\end{enumerate}
Putting Case 1 and Case 2 together,
we get 
$\lambda(x) \leq \frac{1}{n} \cdot \lambda (y)$
for all $\lambda \in \cF(S)$, 
and, therefore, (\ref{Rkiszero.a}) follows. 
\end{proof}

For a purely infinite C*-algebra $A$,
 Lemma~\ref{Rkiszero}
shows that $\rho ([a], [b])=0$ 
for all $[a], [b] \in \Fu (\Cu (A))$
(see also Example~\ref{Exa.PurelyInfinite}). 
\begin{lem}\label{lemRhoInvese}
Let $S$ be a $\Cu$-semigroup and let $x, y, \in \Fu (S)$.
Suppose that $\rho(x, y)=0$. Then
 \[
\rho( y, x)=
\begin{cases}
        0, & \text{if }  \cF_x (S) = \emptyset\\
        \infty, & \text{if }\cF_x (S) \neq \emptyset.
    \end{cases}
\]
\end{lem}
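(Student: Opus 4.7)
The plan is to translate the hypothesis via \Lem{Rkiszero} and then handle the two cases separately. Since $x, y \in \Fu(S)$ and $\rho(x,y) = 0$, \Lem{Rkiszero} gives
\[
\cF_y(S) = \emptyset.
\]
This is the single fact we will exploit.

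If $\cF_x(S) = \emptyset$, I would apply \Lem{Rkiszero} again, this time to the pair $(y, x)$: both are full, and the equivalence there shows that $\rho(y, x) = 0$ is equivalent to $\cF_x(S) = \emptyset$, which is our assumption. So $\rho(y, x) = 0$ in this case.

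For the second case, suppose $\cF_x(S) \neq \emptyset$. I argue by contradiction: assume $\rho(y, x) < \infty$, so that there is $r \in (0, \infty)$ with $\lambda(y) \leq r \lambda(x)$ for every $\lambda \in \cF(S)$. Pick any $\lambda_0 \in \cF_x(S)$, so $\lambda_0(x) = 1$ and hence $\lambda_0(y) \leq r < \infty$. The key step is to show $\lambda_0(y) > 0$: if $\lambda_0(y) = 0$, then since $y$ is full we have $x \leq \infty \cdot y$, whence
\[
1 = \lambda_0(x) \leq \lambda_0(\infty \cdot y) = \sup_n n \lambda_0(y) = 0,
\]
a contradiction. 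Therefore $0 < \lambda_0(y) < \infty$, and the rescaled functional $\tfrac{1}{\lambda_0(y)} \lambda_0$ lies in $\cF_y(S)$, contradicting $\cF_y(S) = \emptyset$. Thus $\rho(y, x) = \infty$.

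The main obstacle, such as it is, lies in the second case: one must combine fullness of $y$ with finiteness of $\lambda_0(y)$ to produce an element of $\cF_y(S)$, and this is where the residue of the argument sits. Once this normalization step is performed, everything else reduces to the two implications of \Lem{Rkiszero} applied to the two orderings of the pair.
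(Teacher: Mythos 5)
Your proposal is correct and follows essentially the same route as the paper: reduce to $\cF_y(S)=\emptyset$ via Lemma~\ref{Rkiszero}, dispose of the case $\cF_x(S)=\emptyset$ by applying that lemma to the reversed pair, and in the remaining case derive a contradiction from $0<\lambda_0(y)\leq r<\infty$ by normalizing to produce an element of the empty set $\cF_y(S)$. The only difference is that you spell out the normalization step and the fullness argument for $\lambda_0(y)>0$ more explicitly than the paper does, which is harmless.
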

\begin{proof}
Since $\rho(x, y)=0$, 
it follows from Lemma~\ref{Rkiszero} that 
$\cF_{y} (S)= \emptyset$.  
Now, if $\cF_x (S) = \emptyset$, then it follows from Lemma~\ref{Rkiszero} that $\rho (y, x)= 0$.
So let $\cF_x (S) \neq \emptyset$. We show that $\rho(y, x)= \infty$. Assume that $\rho(y, x)<\infty$. 
So, there exists $r$ in 
$\cR(\widehat{y}, \widehat{x})$
such that 
$\lambda (y) \leq r \cdot \lambda(x)$ for all $\cF(S)$.
Choose $\lambda_0 \in \cF_x(S)$. So $\lambda_0 (y)\leq r$.
This is a contradiction. Because 
 we know that $\lambda_0 (y)$ cannot be zero as $y \in \Fu (S)$. Also, $\lambda_0 (y)$ cannot be $\infty$. 
\end{proof}
There is no reason to expect that $\rho (x, y) = \frac{1}{\rho(y, x)}$ for full elements $x, y$ in a $\Cu$-semigroup $S$ (see Example~\ref{Exa.RecipricalValue} or Proposition~\ref{rceveryGroup}(\ref{rceveryGroup.a})). 

The following lemma is about monotonicity and homogeneity of the rank ratio function. 
\begin{lem}\label{IncDesrho}
Let $S$ be a $\Cu$-semigroup and let $x, y, z\in S$. Then:
\begin{enumerate}
\item \label{IncDesrho.a}
$\rho(x, n \cdot y)= \frac{1}{n} \cdot \rho(x, y)$ 
and 
$\rho(n \cdot x, y)= n \cdot \rho(x, y)$ for $n\in \N$.
\item\label{IncDesrho.b}
If $x \leq y \leq z$, then
$\rho(z,  y) \leq \rho(z, x)$
and 
$\rho(x, z) \leq \rho(y, z)$.
\item\label{IncDesrho.c}
$\rho (x+y, z) \geq \max \left( \rho (x, z), \rho (y, z)\right)$
and 
$\rho (z, x+y) \leq \min \left( \rho (z, x), \rho (z, y)\right)$. 
\end{enumerate}
\end{lem}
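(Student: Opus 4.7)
The plan is to unpack the definition $\rho(u,v)=\inf\cR(\widehat{u},\widehat{v})$ and use only two elementary facts: functionals are additive (so $\widehat{nu}=n\,\widehat{u}$ and $\widehat{u+v}=\widehat{u}+\widehat{v}$) and comparison of ranks on $\cF(S)$ transfers to inclusion of the sets $\cR(\cdot,\cdot)$. Each of the three parts will reduce to a short set-theoretic manipulation; the only nuisance will be bookkeeping for the extremal values $0$ and $\infty$ under the usual conventions.

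For (\ref{IncDesrho.a}), I would show the scaling identity for $\cR$ directly: since $\widehat{ny}=n\widehat{y}$, we have $r\in\cR(\widehat{x},n\widehat{y})$ iff $\widehat{x}\leq rn\cdot\widehat{y}$ iff $rn\in\cR(\widehat{x},\widehat{y})$. Hence $\cR(\widehat{x},n\widehat{y})=\frac{1}{n}\cR(\widehat{x},\widehat{y})$, and taking infima yields $\rho(x,ny)=\frac{1}{n}\rho(x,y)$. The same argument with $n\widehat{x}\leq r\widehat{y}\iff\widehat{x}\leq(r/n)\widehat{y}$ gives $\cR(n\widehat{x},\widehat{y})=n\cR(\widehat{x},\widehat{y})$ and $\rho(nx,y)=n\rho(x,y)$. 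The cases $\rho(x,y)\in\{0,\infty\}$ are handled by the conventions $n\cdot 0=0$, $n\cdot\infty=\infty$.

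For (\ref{IncDesrho.b}), observe that $x\leq y\leq z$ implies $\widehat{x}\leq\widehat{y}\leq\widehat{z}$ pointwise on $\cF(S)$, since functionals are order-preserving. Then for the first inequality, if $r\in\cR(\widehat{z},\widehat{x})$ we have $\widehat{z}\leq r\widehat{x}\leq r\widehat{y}$, so $r\in\cR(\widehat{z},\widehat{y})$, giving $\cR(\widehat{z},\widehat{x})\subseteq\cR(\widehat{z},\widehat{y})$ and therefore $\rho(z,y)\leq\rho(z,x)$. For the second, $r\in\cR(\widehat{y},\widehat{z})$ forces $\widehat{x}\leq\widehat{y}\leq r\widehat{z}$, hence $\cR(\widehat{y},\widehat{z})\subseteq\cR(\widehat{x},\widehat{z})$ and $\rho(x,z)\leq\rho(y,z)$.

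For (\ref{IncDesrho.c}), I would use additivity $\widehat{x+y}=\widehat{x}+\widehat{y}$. If $r\in\cR(\widehat{x+y},\widehat{z})$ then $\widehat{x}\leq\widehat{x}+\widehat{y}\leq r\widehat{z}$, so $r\in\cR(\widehat{x},\widehat{z})$; the same holds for $y$. Thus every $r$ witnessing $\rho(x+y,z)$ also witnesses both $\rho(x,z)$ and $\rho(y,z)$, forcing $\max(\rho(x,z),\rho(y,z))\leq\rho(x+y,z)$. Conversely, if $r\in\cR(\widehat{z},\widehat{x})$ then $\widehat{z}\leq r\widehat{x}\leq r(\widehat{x}+\widehat{y})=r\,\widehat{x+y}$, so $r\in\cR(\widehat{z},\widehat{x+y})$, and symmetrically for $y$; taking infima gives $\rho(z,x+y)\leq\min(\rho(z,x),\rho(z,y))$. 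No serious obstacle arises; the only real care is to interpret each identity consistently when one side is $0$ or $\infty$, which is immediate from the conventions used throughout the paper.
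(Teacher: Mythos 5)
Your proposal is correct and follows essentially the same route as the paper: part (\ref{IncDesrho.a}) via the scaling bijection between $\cR(\widehat{x},\widehat{n\cdot y})$ (resp.\ $\cR(\widehat{n\cdot x},\widehat{y})$) and $\cR(\widehat{x},\widehat{y})$, and part (\ref{IncDesrho.b}) via the set inclusions $\cR(\widehat{z},\widehat{x})\subseteq\cR(\widehat{z},\widehat{y})$ and $\cR(\widehat{y},\widehat{z})\subseteq\cR(\widehat{x},\widehat{z})$. The only cosmetic difference is that the paper deduces part (\ref{IncDesrho.c}) from part (\ref{IncDesrho.b}) using $x\leq x+y$ and $y\leq x+y$, whereas you reprove the inclusions directly; the content is identical.
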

\begin{proof}
We  prove (\ref{IncDesrho.a}). 
It is easy to check that ``$\rho(x, y)=\infty
\Leftrightarrow \rho(x, n \cdot y)=\infty \Leftrightarrow
\rho(n\cdot x, y)=\infty$" for $n\in \N$. 
So, we suppose that $\rho(x, y) < \infty$. 
For every $n \in \N$, we have $\lambda(n \cdot x)=n \cdot \lambda(x)$ and  $\lambda(n\cdot y)=n \cdot \lambda(y)$
for all $\lambda \in \cF(S)$. So, for $r \in (0, \infty)$,  we have
\begin{align*}
r \in  \cR(\widehat{x}, \widehat{n\cdot y}) &\Leftrightarrow 
\lambda (x) \leq r \cdot \lambda (n\cdot y) \ \ \forall
\lambda \in \cF(S)
\\&
\Leftrightarrow 
\lambda (x) \leq nr \cdot \lambda (y) \ \ \forall
\lambda \in \cF(S)
\Leftrightarrow 
nr \in  \cR(\widehat{x}, \widehat{y}).
\end{align*}
and 
\begin{align*}
r \in  \cR(\widehat{n\cdot x}, \widehat{y}) &\Leftrightarrow 
\lambda (n\cdot x) \leq r \cdot \lambda (y) \ \ \forall
\lambda \in \cF(S)
\\&
\Leftrightarrow 
n\cdot\lambda (x) \leq r \cdot \lambda (y) \ \ \forall
\lambda \in \cF(S)
\Leftrightarrow 
\frac{r}{n} \in  \cR(\widehat{x}, \widehat{y}).
\end{align*}
Therefore, 
 $r \in  \cR(\widehat{x}, \widehat{n\cdot y})$ if and only if $r \in \frac{1}{n} \cdot  \cR(\widehat{x}, \widehat{y})$ and, also, $r\in  \cR(\widehat{n\cdot x}, \widehat{y})$ if and only if $r \in n \cdot  \cR(\widehat{x}, \widehat{y})$. Using the definition of $\rho(x, n\cdot y)$ and $\rho(n\cdot x, y)$, the result follows. 
 
 We prove (\ref{IncDesrho.b}).
 We only prove $\rho (z, y) \leq \rho (z, x)$. The proof of the second  part is very similar to the first part. 
First, we claim that if $\rho(y, z)=\infty$, then $\rho (z, x)=\infty$. 
Assume that $\rho (z, x)<\infty$.
So, there exists $r>0$ in $ \cR(\widehat{z}, \widehat{x})$ such that 
$\widehat{z} \leq r\cdot \widehat{x}$.  Since $x \leq y$, we have 
$\widehat{z} \leq r\cdot \widehat{y}$.  This implies that $\rho (z, y)< \infty$. This is a contradiction. 
Now suppose that $\rho(z, x) < \infty$. 
Let $r> 0$ be in $ \cR(\widehat{z}, \widehat{x})$. So $\widehat{z} \leq r\cdot \widehat{x}$.
Using $x\leq y$, we get $\widehat{z} \leq r\cdot \widehat{y}$.
Thus $r\in \cR(\widehat{z}, \widehat{y})$. 
This shows that $ \cR(\widehat{z}, \widehat{x}) \subseteq \cR(\widehat{z}, \widehat{y})$ and therefore
\[
\rho(z, y)=\inf \cR(\widehat{z}, \widehat{y}) \leq \inf  \cR(\widehat{z}, \widehat{x})=\rho(z, x).
\]

Part (\ref{IncDesrho.c}) follows from (\ref{IncDesrho.b}) and the fact that $x \leq x+y$ and $y \leq x+y$. 
This completes the proof. 
\end{proof}

Lemma~\ref{IncDesrho}(\ref{IncDesrho.c})
shows that $\rho$ is  (averaged) subadditive 
in the second argument and is 
(averaged) superadditive 
in the first argument.

The following lemma determines  the rank ratio structure of an arbitrary increasing sequence in a Cuntz semigroup. 
\begin{thm}\label{LimofRank}
Let $S$ be a $\Cu$-semigroup and 
let $x$ and a sequence $(x_n)_{n=1}^{\infty}$ be in $S$ with $x_n\leq x_{n+1} \leq x$ for all $n \in \N$. 
Then:
\begin{enumerate}
\item\label{LimofRank.a}
The sequence  
$(\rho(x_n, x))_{n=1}^{\infty}$ is an increasing sequence in $\mathbb{R}$.,  bounded  above by 1. So, $\lim_{n \to \infty} \rho(x_n, x)$ exists in $\mathbb{R}$.

If further $x_n \in \Full (S)$ for all $n \in \N$ and $\cF_x(S)\neq \emptyset$, then 
$0<\lim_{n \to \infty} \rho(x_n, x)$.
\item \label{LimofRank.b}
The sequence  
$(\rho(x, x_n))_{n=1}^{\infty}$ in an decreasing  sequence in $[0, \infty]$. 

If further $x_n \in \Full (S)$ for all $n \in \N$ and $\cF_x(S)\neq \emptyset$, then 
$\lim_{n \to \infty} \rho(x, x_n) \geq 1$. 
\end{enumerate}
\end{thm}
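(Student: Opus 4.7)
The plan is to derive both parts directly from the monotonicity lemma (Lemma~\ref{IncDesrho}(\ref{IncDesrho.b})) together with the bounds established in Lemma~\ref{RkRatioProperty}. There is no need to pass through functionals in any direct limit; the entire argument is pointwise, applying the stated properties to consecutive pairs $x_n \leq x_{n+1} \leq x$.

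For part (\ref{LimofRank.a}), I would first note that applying Lemma~\ref{IncDesrho}(\ref{IncDesrho.b}) to the chain $x_n \leq x_{n+1} \leq x$ yields $\rho(x_n, x) \leq \rho(x_{n+1}, x)$, so the sequence is increasing. Next, since $x_n \leq x$, Lemma~\ref{RkRatioProperty}(\ref{RkRatioProperty.b}) gives $\rho(x_n, x) \leq 1$. Thus $(\rho(x_n, x))_{n=1}^\infty$ is a bounded monotone sequence in $\R$, so its limit exists and lies in $[0,1]$. Under the additional hypotheses ($x_n \in \Fu(S)$, $\cF_x(S)\neq \emptyset$), the second half of Lemma~\ref{RkRatioProperty}(\ref{RkRatioProperty.b}) yields $\rho(x_1, x)>0$; monotonicity then gives $\lim_{n\to\infty}\rho(x_n, x) \geq \rho(x_1, x) > 0$.

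For part (\ref{LimofRank.b}), the same application of Lemma~\ref{IncDesrho}(\ref{IncDesrho.b}) to $x_n \leq x_{n+1} \leq x$ gives $\rho(x, x_{n+1}) \leq \rho(x, x_n)$, so the sequence is decreasing in $[0, \infty]$ and its limit exists. For the final inequality, the only point that takes a moment is verifying that $\cF_{x_n}(S)$ is nonempty, which is what lets us invoke Lemma~\ref{RkRatioProperty}(\ref{RkRatioProperty.c}). I would pick any $\lambda \in \cF_x(S)$ and use fullness of $x_n$ to write $x \leq \infty\cdot x_n$, so that $1 = \lambda(x) \leq \sup_k k\lambda(x_n)$ forces $\lambda(x_n) > 0$; since also $\lambda(x_n)\leq \lambda(x)=1 < \infty$, the normalized functional $\mu_n = \tfrac{1}{\lambda(x_n)}\lambda$ belongs to $\cF_{x_n}(S)$. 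Then Lemma~\ref{RkRatioProperty}(\ref{RkRatioProperty.c}) applied with the roles $x\leadsto x_n,\ y\leadsto x$ gives $\rho(x, x_n)\geq 1$ for every $n$, whence $\lim_{n\to\infty}\rho(x, x_n)\geq 1$.

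The main (and only) subtle step is the normalization argument in part (\ref{LimofRank.b}): one must simultaneously rule out $\lambda(x_n)=0$ (via fullness of $x_n$) and $\lambda(x_n)=\infty$ (via $x_n\leq x$ and $\lambda(x)=1$) in order to construct an element of $\cF_{x_n}(S)$. Everything else is a direct bookkeeping application of the already-established monotonicity and bound lemmas.
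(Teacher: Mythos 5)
Your proposal is correct and follows essentially the same route as the paper: monotonicity from Lemma~\ref{IncDesrho}(\ref{IncDesrho.b}), the bounds from Lemma~\ref{RkRatioProperty}(\ref{RkRatioProperty.b}) and (\ref{RkRatioProperty.c}), and the normalization argument producing an element of $\cF_{x_n}(S)$, which is exactly the content of the paper's Remark~\ref{Fulfunctional} spelled out inline. No gaps.
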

\begin{proof}
We prove (\ref{LimofRank.a}).
Since $x_n \leq x_{n+1} \leq x$, it follows from Lemma~\ref{IncDesrho}(\ref{IncDesrho.b}) and  Lemma~\ref{RkRatioProperty}(\ref{RkRatioProperty.b}) that 
$
\rho (x_n, x) \leq \rho (x_{n+1}, x) \leq 1
$
for all  $n \in \N$.
So the first part of (\ref{LimofRank.a}) follows. Now,
if further $\cF_{x} (S) \neq \emptyset$, then, by Remark~\ref{Fulfunctional} and the fact that $x_n\in \Fu (S)$, we have 
$\cF_{x_n} (S) \neq \emptyset$, 
and, therefore,   $\rho(x, x_n)> 0$
for all $n \in \N$. 
 Using this and the fact that 
$(\rho(x_n, x))_{n=1}^{\infty}$ is an increasing sequence, we get $0<\lim_{n \to \infty} \rho(x_n, x)$. 

We prove (\ref{LimofRank.b}).
Since $x_n \leq x_{n+1} \leq x$, it follows from Lemma~\ref{IncDesrho}(\ref{IncDesrho.b})  that 
$
\rho (x, x_n) \geq \rho (x, x_{n+1})
$
 for all $n \in \N$.
So, the fist part of (\ref{LimofRank.b}) follows. 
If further 
$\cF_{x} (S) \neq \emptyset$, then, by Lemma~\ref{RkRatioProperty}(\ref{RkRatioProperty.c}),
$
\rho (x, x_n) \geq \rho (x, x_{n+1})\geq 1 
$
for all $n \in \N$,
and, therefore, the second part of (\ref{LimofRank.b}) follows. 
\end{proof}
In Part (\ref{LimofRank.b}) of the above theorem
the condition $\cF_x(S) \neq \emptyset$ is necessary. (See Example~\ref{ExprhoofNA}(\ref{ExprhoofNA.b}).)
See Example~\ref{ExprhoofNA} for a more concrete example satisfying Theorem~\ref{LimofRank}.

In the following proposition, we give an equivalent version of the definition of the normalized rank ratio function and, more importantly, we show that the rank ratio function and normalized rank ratio function are equal on the full elements of unital residually stably finite C*-algebras. 
\begin{prp}\label{Prp.rhoequiavalent}
Let $A$ be a unital stably finite C*-algebra and let $a, b \in \cup_{n=1}^{\infty} M_n (A)_+$. 
Set $\eta= \inf_{{\lambda \in \cF_{[1_A]} (\Cu(A)) }} \lambda (b)$. Then
\begin{enumerate}
\item\label{Prp.rhoequiavalent.b}
If $a$ is full and $A$ is residually stably finite, then $\rho ([a], [b]) = \rho_{[1_A]} ([a], [b])$.
\item\label{Prp.rhoequiavalent.a_0}
If $b$ is full, then 
$\eta>0$, and, further
\[
\sup_{\lambda \in \cF_{[1_A]} (\Cu(A)) } \left( \frac{\lambda ([a])}{\lambda ([b])} \right) \leq \frac{k}{\eta} \
 \mbox{ and } \
\rho_{[1_A]} ([a], [b]) \leq \frac{k}{\eta} \
\mbox{ for some } k \in \N.
\] 
\item\label{Prp.rhoequiavalent.a}
If $b$ is full , then
$
\rho_{[1_A]} ([a], [b])  =
\sup_{\lambda \in \cF_{[1_A]} (\Cu(A)) } \left( \frac{\lambda ([a])}{\lambda ([b])} \right).
$
\item\label{Prp.rhoequiavalent.c}
If $a, b$ are full and 
and $A$ is residually stably finite, then 
\[
\rho ([a], [b]) = \rho_{[1_A]} ([a], [b]) = \sup_{\lambda \in \cF_{[1_A]} (\Cu(A)) } \left( \frac{\lambda ([a])}{\lambda ([b])} \right)
\leq \frac{k}{\eta} 
\]  
for some $k \in \N$.
\end{enumerate}
\end{prp}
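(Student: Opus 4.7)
My plan is to prove (2), (3), (1), (4) in this order, since (4) is the conjunction of the preceding three.

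\textbf{For (2)}, fullness of $b$ combined with Lemma~\ref{InfofFullElem}(3) and the Blackadar--Handelman bijection of Remark~\ref{rmkHandelElliott} yields $\eta > 0$. Picking $m \in \N$ with $a \in M_m(A)_+$ gives $[a] \leq m[1_A]$ in $\Cu(A)$, hence $\lambda([a]) \leq m$ on $\cF_{[1_A]}(\Cu(A))$. Both estimates claimed in (2) then follow by taking $k = m$: dividing yields the supremum bound, and the same pointwise inequality places $m/\eta$ in $\cR_{[1_A]}(\widehat{[a]},\widehat{[b]})$.

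\textbf{For (3)}, let $s = \sup_{\lambda \in \cF_{[1_A]}(\Cu(A))} \lambda([a])/\lambda([b])$, finite by (2). The inequality $s \leq \rho_{[1_A]}([a],[b])$ comes from dividing $\lambda([a]) \leq r\lambda([b])$ by $\lambda([b]) \geq \eta > 0$ for any $r \in \cR_{[1_A]}$; conversely, for each $\varepsilon > 0$ the pointwise bound $\lambda([a]) \leq (s+\varepsilon)\lambda([b])$ places $s+\varepsilon$ in $\cR_{[1_A]}$, so $\rho_{[1_A]}([a],[b]) \leq s+\varepsilon$, and letting $\varepsilon \to 0$ gives equality.

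\textbf{For (1)}, Lemma~\ref{rhorho0} gives $\rho_{[1_A]}([a],[b]) \leq \rho([a],[b])$ unconditionally, so only the reverse inequality requires work, and we may assume $\rho_{[1_A]}([a],[b]) < \infty$. The main obstacle is that the hypothesis of (1) only gives fullness of $a$, while extending an inequality from $\cF_{[1_A]}$ to all of $\cF(\Cu(A))$ genuinely wants $b$ to be full too; the key step is to deduce that $b$ is in fact full under our standing assumptions. Indeed, were $b$ not full, it would lie in $I \otimes \cK$ for some proper closed ideal $I$ of $A$; residual stable finiteness of $A$ makes $A/I$ unital and stably finite, so it admits a normalized $2$-quasitrace $\tau$. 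Pulling back $d_\tau$ through $\Cu(A) \to \Cu(A/I)$ produces a functional $\mu \in \cF_{[1_A]}(\Cu(A))$ with $\mu([b]) = 0$, while fullness of the image of $a$ in $A/I$ together with Lemma~\ref{InfofFullElem}(3) applied inside $A/I$ forces $\mu([a]) > 0$; this contradicts $\rho_{[1_A]}([a],[b]) < \infty$. With $b$ now known to be full, fix $r > \rho_{[1_A]}([a],[b])$ and verify $\widehat{[a]} \leq r\widehat{[b]}$ on all of $\cF(\Cu(A))$ by case analysis on $\lambda$: the cases $\lambda \equiv 0$ and $0 < \lambda([1_A]) < \infty$ (normalize $\lambda/\lambda([1_A]) \in \cF_{[1_A]}$ and rescale) are routine; the case $\lambda([1_A]) = 0$, $\lambda \neq 0$ is impossible because $x \leq \infty \cdot [1_A]$ forces $\lambda \equiv 0$ by sup-preservation of functionals; and when $\lambda([1_A]) = \infty$, fullness of both $a$ and $b$ (giving $[1_A] \leq k_a[a]$ and $[1_A] \leq k_b[b]$) forces $\lambda([a]) = \lambda([b]) = \infty$, making the inequality trivial. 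Part (4) is then an immediate assembly of (1)--(3).
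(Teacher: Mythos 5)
Your proposal is correct and follows essentially the same route as the paper: the bound $\lambda([a])\leq k$ via $[a]\leq k\cdot[1_A]$ for part (2), the divide-by-$\lambda([b])\geq\eta>0$ argument for part (3), and for part (1) the one-sided inequality from Lemma~\ref{rhorho0} plus the same case analysis on $\lambda([1_A])\in\{0\}\cup(0,\infty)\cup\{\infty\}$, with the key point being that $b$ is forced to be full. The only difference is cosmetic: where the paper deduces fullness of $b$ by citing Lemma~\ref{ResiduallyAndFullEquivalent} (via $\inf_{\lambda\in\cF_{[1_A]}}\lambda([b])\geq\tfrac{1}{r}\inf_{\lambda\in\cF_{[1_A]}}\lambda([a])>0$), you inline that lemma's quotient-functional construction directly.
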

\begin{proof}
We set $\Delta = \sup_{\lambda \in \cF_{[1_A]} (\Cu(A)) }  \left( \frac{\lambda ([a])}{\lambda ([b])} \right)$.
We prove (\ref{Prp.rhoequiavalent.b}).
By Lemma~\ref{rhorho0}, we have $\rho_{[1_A]} ([a], [b]) \leq \rho ([a], [b])$.  
It is enough to prove that $\rho ([a], [b]) \leq \rho_{[1_A]} ([a], [b])$. We may assume that $\rho_{[1_A]} ([a], [b])< \infty$. 
Let $r>0$ be in $\cR_{[1_A]} (\widehat{[a]}, \widehat{[b]})$. So
\begin{equation}
\label{Eq1.2025.03.17}
\lambda ([a]) \leq r \cdot \lambda ([b])
\quad
\mbox{ 
for all } \lambda \in \cF_{[1_A]}(\Cu(A)).
\end{equation} 
\begin{itemize}
\item[Case 1]
Using (\ref{Eq1.2025.03.17}) and Lemma~\ref{LemcFbigection}(\ref{LemcFbigection.b}), we get 
$\lambda ([a]) \leq r \cdot \lambda ([b])$
for all $\lambda \in \cF(\Cu(A))$
with $0<\lambda ([1_A]) < \infty$.
\item[Case 2]
Suppose $\lambda \in \cF(S)$ with $\lambda ([1_A])=0$. So
$\lambda ([a])=\lambda ([b])=0$.
So, $\lambda([a]) \leq r \cdot \lambda([b])$.  
\item[Case 3]
Suppose $\lambda \in \cF(S)$ with $\lambda ([1_A])=\infty$, so $\lambda([a])=\infty$
by Lemma~\ref{LemcFbigection}(\ref{LemcFbigection.a}). 
 Now, we claim that $\lambda ([b]) = \infty$. To prove this it suffices to show that $[b]$ is full. 
Since $a$ is full, then, by Lemma~\ref{ResiduallyAndFullEquivalent}, 
$\inf_{\lambda \in \cF_{[1_A]}(\Cu(A)) } \lambda ([a])>0$. 
Using (\ref{Eq1.2025.03.17}), we get 
$\inf_{\lambda \in \cF_{[1_A]} (\Cu(A))} \lambda ([b])>0$.  So, by Lemma~\ref{ResiduallyAndFullEquivalent}, 
$b$ is also full and 
$\lambda ([b]) =  \infty$. 
Thus  $\lambda([a]) \leq r \cdot \lambda([b])$.  
\end{itemize}
Putting all the cases together, we get 
$\lambda ([a]) \leq r \cdot \lambda [b]
\mbox{ for all } \lambda \in \cF(S)$. 
This implies that $\rho([a], [b]) \leq r$, and therefore, $\rho([a], [b]) \leq \rho_{[1_A]} ([a], [b])$.

We prove (\ref{Prp.rhoequiavalent.a_0}). 
Since $b$ is full, it follows from Lemma~\ref{ResiduallyAndFullEquivalent} that $\eta>0$. 
Also, since 
$a \in \cup_{n=1}^{\infty} M_n (A)_+$ and $A$ is unital, we choose $k \in \N$ such that 
$a \leq \| a \| \cdot 1_{M_{k} (A)}$. 
So, for all $\lambda \in \cF_{[1_A]} (\Cu(A))$, we have $\lambda ([a]) \leq k$. 

Using this,  for all $\lambda \in \cF_{[1_A]} (\Cu(A))$, we get 
$\frac{\lambda ([a])}{\lambda ([b])} \leq \frac{k}{\eta}$. Therefore, $\Delta \leq \frac{k}{\eta}$ and, also,  $\rho_{[1_A]} ([a], [b]) \leq \frac{k}{\eta}$. 

We prove (\ref{Prp.rhoequiavalent.a}).
We first show that $\Delta \leq \rho_{[1_A]} ([a], [b])$. 
So let $r>0$ be in $\cR_{[1_A]} (\widehat{[a]}, \widehat{[b]})$. 
This implies that 
\begin{equation}\label{EQ3.2025.04.11}
\lambda ([a]) \leq r \cdot \lambda ([b])
\quad
\mbox{ for all } \lambda \in \cF_{[1_A]} (\Cu(A)).
\end{equation} 
Using the fact that  $\eta>0$ and dividing (\ref{EQ3.2025.04.11}) by $\tfrac{1}{ \lambda([b]) }$, we have 
$\frac{ \lambda ([a]) }{\lambda ([b])} \leq r$ for all $\lambda \in \cF_{[1_A]} (\Cu(A))$.
Thus, 
$\Delta \leq r$. Taking infimum over such $r$, we get 
$\Delta \leq \rho_{[1_A]} ([a], [b])$. 
To prove the converse, we know that 
$\frac{\lambda ([a])}{\lambda ([b])}  \leq \Delta < \infty$
for all $\lambda \in \cF_{[1_A]} (\Cu(A))$.  
This implies that,  for all $\lambda \in \cF_{[1_A]} (\Cu(A))$, we have
$\lambda ([a]) \leq \Delta \cdot \lambda ([b])$.
Thus, $\rho_{[1_A]} ([a], [b]) \leq \Delta$, as desired. 

Part (\ref{Prp.rhoequiavalent.c}) follows immediately from 
(\ref{Prp.rhoequiavalent.b}) and (\ref{Prp.rhoequiavalent.a}). 
\end{proof}
 We refer to Definition~3.1 in \cite{BP18}, 
 which is originally from \cite{GH76}, for the value $\beta (x, y)$ and its properties
 for specific type of elements in $S$. 
 It is easy to show that if  $x, y$ are two given elements in a Cuntz semigroup
 with $x \leq n \cdot y$ for some $n \in \N$, then $\rho(x, y) \leq \beta(x, y)$. 

The focus of the rest of this section is  on the continuity of the rank of a full element in  the Cuntz semigroup of a unitl C*-algebra,  its connection to the normalized rank ratio function, and 
oscillation a positive full element. 

The following lemma is used in the proof of Lemma~\ref{FunCal}. 
\begin{lem}\label{LemTrace}
Let $A$ be a unital C*-algebra and let $\tau \in \QT_{2}^{1}(A)$. Let
$0 \leq a, b \leq 1$. Assume that
$a b=b$. Then
$\tau(b) \leq 
d_{\tau} (b) \leq \tau (a) 
 \leq d_{\tau} (a)$.
\end{lem}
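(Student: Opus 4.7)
The plan is to establish the four-term inequality by bounding $b$ above by $b^{1/n}$, showing $b^{1/n} \le a$ for every $n \in \N$, and invoking the order-preservation of quasitraces on positive elements (from Corollary~6.10 of \cite{Thi17}, as cited in the preliminaries).

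First I would dispatch the two easy inequalities. Since $0 \le b \le 1$, functional calculus gives $b \le b^{1/n}$ for each $n$ (the map $t \mapsto t^{1/n} - t$ is nonnegative on $[0,1]$). By order-preservation of $\tau$, we get $\tau(b) \le \tau(b^{1/n})$, and passing to the limit yields $\tau(b) \le d_\tau(b)$. The same argument applied to $a$ gives $\tau(a) \le d_\tau(a)$.

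The core step is to show $d_\tau(b) \le \tau(a)$, and for this it suffices to prove $b^{1/n} \le a$ in $A$ for every $n$. Start by observing that $ab = b$ with $a, b$ self-adjoint forces $ba = (ab)^* = b$, so $a$ and $b$ commute. Consequently the abelian C*-subalgebra $C^*(1_A, a, b)$ is isomorphic to $C(X)$ for some compact Hausdorff $X$, and $a, b$ correspond to continuous functions in $[0,1]$. The relation $ab = b$ translates to: $a(x) = 1$ whenever $b(x) \ne 0$. Then $b^{1/n}(x) \le 1 = a(x)$ on $\{b > 0\}$ and $b^{1/n}(x) = 0 \le a(x)$ elsewhere, so $b^{1/n} \le a$ in $C^*(1_A,a,b)$ and hence in $A$.

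Once $b^{1/n} \le a$ is established, order-preservation of $\tau$ yields $\tau(b^{1/n}) \le \tau(a)$ for every $n$, and letting $n \to \infty$ gives $d_\tau(b) \le \tau(a)$. Concatenating the three inequalities produces the desired chain $\tau(b) \le d_\tau(b) \le \tau(a) \le d_\tau(a)$. There is no real obstacle; the only subtlety is recognizing that the hypothesis $ab = b$ on self-adjoint elements secretly encodes commutativity plus the condition ``$a$ is the identity on the support of $b$'', which is precisely what makes the comparison $b^{1/n} \le a$ hold.
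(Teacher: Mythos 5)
Your proof is correct, and its overall skeleton matches the paper's: both dispatch $\tau(b)\leq d_\tau(b)$ and $\tau(a)\leq d_\tau(a)$ from $c\leq c^{1/n}$ for $0\leq c\leq 1$, and both reduce the middle inequality to controlling $\tau(b^{1/n})$ by $\tau(a)$. Where you differ is in how that middle step is justified. You first extract commutativity from $ab=b$ (via $ba=(ab)^*=b$), pass to the Gelfand representation of $C^*(1_A,a,b)$, and prove the genuine operator inequality $b^{1/n}\leq a$, after which only order-preservation of $\tau$ is needed. The paper instead stays at the level of the quasitrace: from $ab^{1/n}=b^{1/n}$ it writes $\tau(b^{1/n})=\tau(ab^{1/n})=\tau(a^{1/2}b^{1/n}a^{1/2})\leq \|b^{1/n}\|\cdot\tau(a)\leq\tau(a)$, using the conjugation bound $a^{1/2}xa^{1/2}\leq\|x\|\cdot a$. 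The two routes are of comparable length; yours has the small advantage of producing the reusable operator inequality $b^{1/n}\leq a$ and of making the role of commutativity explicit (the paper's identity $ab^{1/n}=a^{1/2}b^{1/n}a^{1/2}$ also silently relies on $a$ commuting with $b^{1/n}$), while the paper's avoids invoking the Gelfand transform. Both correctly rely on order-preservation of $2$-quasitraces as recorded in the preliminaries.
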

\begin{proof}
Since $\|a \|, \|b \| \leq 1$, it follows that 
$a \leq a^{1/n}$ and $b \leq b^{1/n}$
for all $n \in \N$. This implies that
\[
\tau (a) \leq \lim_{n \to \infty} \tau(a^{1/n}) = d_{\tau} (a)
\quad
\mbox{and}
\quad
\tau (b) \leq \lim_{n \to \infty} \tau(b^{1/n}) = d_{\tau} (b).
\]
So, it remains to show that $d_{\tau} (b) \leq \tau(a)$. Using $ab=b$, we get
$a b^{1/n} =b^{1/n}$ for all $n \in \N$. So, for all $n \in \N$, we have 
\begin{align*}
\tau (b^{1/n}) = \tau (a b^{1/n}) =
\tau (a^{1/2} b^{1/n} a^{1/2})
\leq 
\|b^{1/n} \| \cdot \tau (a)\leq \tau (a).
\end{align*}
This implies that $d_{\tau} (b) = \lim_{n \to \infty} \tau (b^{1/n}) \leq \tau (a)$.  
This completes the proof.
\end{proof}
\begin{ntn}
For every $\ep>0$, we define
$
f_{\ep}(t)\
=
\begin{cases}
0, & \text{if }  t \in [0, \tfrac{\ep}{2}] \\
\tfrac{2}{\ep} x -1,  & \text{if }  t \in [\tfrac{\ep}{2}, \ep] \\
1, & \text{if } t \in [\ep, \infty).
\end{cases}
$
\end{ntn}

The following lemma is used  in the proof of Lemma~\ref{Oscillation-Equvallence}.
\begin{lem}\label{FunCal}
Let $A$ be a unital C*-algebra and let
$a \in A_+$ with $\|a\| \leq 1$. Let $\tau \in \QT_{2}^{1}(A)$ and let $\ep>0$. Then:
\begin{enumerate}
\item\label{FunCal.1}
$f_{2\ep}(a) \sim (a-\ep)_+$. 
\item\label{FunCal.2}
$f_{\ep}(a) (a-\ep)_+ =(a-\ep)_+$.
\item \label{FunCal.3}
$f_{\delta} (a) f_{\ep} (a) = f_{\ep} (a)$ for all $\delta \in (0, \tfrac{\ep}{4}]$.
\item\label{FunCal.4}
$\tau(f_{\ep} (a))  \leq d_{\tau} ((a-\delta)_+) \leq \tau (f_{\delta} (a))$ 
for all $\delta \in (0, \tfrac{\ep}{8}]$.
\end{enumerate}
\end{lem}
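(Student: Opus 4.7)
My plan is to dispatch parts~(\ref{FunCal.1})--(\ref{FunCal.3}) by direct functional-calculus computations on $\spec(a) \subseteq [0,1]$ and then to deduce part~(\ref{FunCal.4}) from Lemma~\ref{LemTrace} combined with (\ref{FunCal.1}) and (\ref{FunCal.2}).

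For (\ref{FunCal.1}), both $f_{2\ep}$ and the function $t \mapsto (t-\ep)_+$ are nonnegative continuous functions on $[0,\infty)$ vanishing exactly on $[0,\ep]$, so each is of the form $h \circ (\text{the other})$ for some continuous $h$ with $h(0) = 0$; the usual fact that $h(b) \precsim b$ for such $h$ and any $b \in A_+$ then gives $f_{2\ep}(a) \sim (a-\ep)_+$. For (\ref{FunCal.2}), the product $f_\ep(t)(t-\ep)_+$ is checked pointwise: it is $0$ on $[0,\ep]$ where $(t-\ep)_+ = 0$, and it equals $1 \cdot (t-\ep)_+$ on $[\ep,\infty)$ since $f_\ep(t) = 1$ there. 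For (\ref{FunCal.3}), the condition $\delta \leq \ep/4$ forces $\delta < \ep/2$, so $f_\delta(t) = 1$ throughout $[\ep/2,\infty)$, which contains the support of $f_\ep$; hence $f_\delta(t) f_\ep(t) = f_\ep(t)$.

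For the right inequality in (\ref{FunCal.4}), I would apply Lemma~\ref{LemTrace} with the role of ``$a$'' played by $f_\delta(a)$ and the role of ``$b$'' by $(a-\delta)_+$; both are positive contractions (since $\|a\| \leq 1$), and the required relation $f_\delta(a)(a-\delta)_+ = (a-\delta)_+$ is exactly (\ref{FunCal.2}) with $\ep$ replaced by $\delta$. The lemma then yields $d_\tau((a-\delta)_+) \leq \tau(f_\delta(a))$.

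For the left inequality, I would chain three estimates. First, $\tau(f_\ep(a)) \leq d_\tau(f_\ep(a))$ by the same computation carried out inside the proof of Lemma~\ref{LemTrace} for an arbitrary positive contraction. Second, applying (\ref{FunCal.1}) with $\ep$ replaced by $\ep/2$ gives $f_\ep(a) \sim (a-\ep/2)_+$, so that $d_\tau(f_\ep(a)) = d_\tau((a-\ep/2)_+)$. Third, the hypothesis $\delta \leq \ep/8$ ensures $\delta < \ep/2$, whence $(a-\ep/2)_+ \leq (a-\delta)_+$; the order-preservation of $d_\tau$ (noted after Corollary~6.10 of \cite{Thi17}) then gives $d_\tau((a-\ep/2)_+) \leq d_\tau((a-\delta)_+)$. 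Concatenating these yields $\tau(f_\ep(a)) \leq d_\tau((a-\delta)_+)$. I do not anticipate any genuine obstacle here; the only curiosity is that the stated hypothesis $\delta \leq \ep/8$ is strictly stronger than what this argument requires (only $\delta \leq \ep/2$ is used on the left and no relation at all on the right), so the extra slack is presumably reserved for downstream applications rather than forced by the present lemma.
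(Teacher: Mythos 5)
Your proof is correct, and for parts (\ref{FunCal.1})--(\ref{FunCal.3}) and the right-hand inequality of (\ref{FunCal.4}) it coincides with the paper's argument (the paper likewise dispatches (\ref{FunCal.1})--(\ref{FunCal.3}) by functional calculus and gets $d_\tau((a-\delta)_+)\leq\tau(f_\delta(a))$ from Lemma~\ref{LemTrace} via (\ref{FunCal.2})). The one place you diverge is the left-hand inequality: the paper applies Lemma~\ref{LemTrace} a second time to the pair $f_{2\delta}(a)$, $f_\ep(a)$, using part (\ref{FunCal.3}) to get $f_{2\delta}(a)f_\ep(a)=f_\ep(a)$ (which is exactly where the hypothesis $2\delta\leq\ep/4$, i.e.\ $\delta\leq\ep/8$, is consumed), and then identifies $d_\tau(f_{2\delta}(a))=d_\tau((a-\delta)_+)$ via part (\ref{FunCal.1}). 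You instead chain $\tau(f_\ep(a))\leq d_\tau(f_\ep(a))=d_\tau((a-\ep/2)_+)\leq d_\tau((a-\delta)_+)$, using $\tau(b)\leq d_\tau(b)$ for a positive contraction, part (\ref{FunCal.1}) at level $\ep/2$, and order-preservation of $d_\tau$. Your route is equally valid and bypasses part (\ref{FunCal.3}) entirely for this inequality, which is why you only need $\delta\leq\ep/2$; the answer to your closing curiosity is that $\delta\leq\ep/8$ is not reserved for downstream use but is exactly what the paper's own route through $f_{2\delta}$ and part (\ref{FunCal.3}) forces.
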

\begin{proof}
Part (\ref{FunCal.1}), (\ref{FunCal.2}), (\ref{FunCal.3}) follow from functional calculus at $a$ and definition of $f$. 

To prove (\ref{FunCal.4}), let $\dt \in (0, \tfrac{\ep}{8}]$.  It follows from (\ref{FunCal.2}) that 
$f_{\delta} (a) (a- \delta)_+=(a- \delta)_+$.
Then, by Lemma~\ref{LemTrace}, 
$d_{\tau} ((a-\delta)_+) \leq \tau (f_\dt (a))$. 
Since 
$2\delta \leq \tfrac{\ep}{4}$, it follows
from (\ref{FunCal.3}) that 
$f_{2\dt} (a) f_\ep (a) = f_\ep (a)$.
Using Lemma~\ref{LemTrace}, we get 
$\tau (f_\ep (a)) \leq d_\tau (f_{2\dt} (a))$.
Putting these and the fact that $f_{2 \delta} (a) \sim (a-\delta)_+$ together, we get
\[
\tau (f_\ep (a)) \leq d_{\tau} (f_{2 \delta} (a)) = d_{\tau} ((a-\delta)_+) \leq 
\tau (f_{\delta} (a)).
\]
This completes the proof. 
\end{proof}

The following definition is the definition of the oscillation of a positive element in a unital C*-algebra which is first introduced in A.1 of \cite{ElGONi20}. We refer to \cite{FH24, Li24}  for more detailed results on oscillation zero and its connection with $\cZ$-stability and stable rank one. 
\begin{dfn}\label{OscilDfn}
Let $A$ be a unital C*-algebra with $\QT_{2}^1(A)\neq \emptyset$ and let 
$a \in \cup_{n=1}^{\infty} M_n(A)_+$. 
The  oscillation of $a$ on $\QT_{2}^{1} (A)$, denoted by $\omega (a)$, is 
\[
\omega (a) =
\lim_{n \to \infty} \sup_{\tau \in \QT^1_2 (A) } 
\left\{ 
d_{\tau} (a) -  \tau \big( f_{\tfrac{1}{n}} (a) \big) 
 \right\}.
\]
\end{dfn}

The following lemma is used in the proof of Theorem~\ref{ContRank}. 
\begin{lem}\label{Oscillation-Equvallence}
Let $A$ be a unital C*-algebra $\QT_{2}^1(A)\neq \emptyset$, let $a\in \cup_{n=1}^{\infty} M_n(A)_+$ with $\| a\| \leq 1$. 
 Then the following are equivalent:
\begin{enumerate}
\item\label{Oscillation-Equvallence.1}
$\omega(a)=0$.
\item\label{Oscillation-Equvallence.2}
$\lim_{n \to \infty} \sup_{\tau \in \QT^1_2 (A)} \left\{ d_{\tau} (a) -  d_{\tau} ((a-\tfrac{1}{n})_+) \right\}=0$.
\end{enumerate}
\end{lem}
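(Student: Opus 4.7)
The plan is to apply the sandwich inequality from Lemma~\ref{FunCal}(\ref{FunCal.4}) to compare the two quantities directly. Write
\[
A_n = \sup_{\tau \in \QT^1_2 (A)} \bigl\{ d_\tau(a) - \tau(f_{1/n}(a)) \bigr\}
\quad\text{and}\quad
B_n = \sup_{\tau \in \QT^1_2 (A)} \bigl\{ d_\tau(a) - d_\tau\bigl((a - \tfrac{1}{n})_+\bigr) \bigr\}.
\]
Condition~(\ref{Oscillation-Equvallence.1}) asserts $\lim_n A_n = 0$ and condition~(\ref{Oscillation-Equvallence.2}) asserts $\lim_n B_n = 0$, so it suffices to prove $\lim_n A_n = \lim_n B_n$ in $[0, \infty]$.

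First I would record that both sequences are monotone and nonnegative, hence convergent. As $n$ grows, $f_{1/n}$ increases pointwise on $[0, \infty)$, so $\tau(f_{1/n}(a))$ is increasing in $n$ and $(A_n)$ is decreasing; similarly $(a - 1/n)_+$ increases with $n$, so $d_\tau((a - 1/n)_+)$ is increasing and $(B_n)$ is decreasing. Nonnegativity of each supremand follows from Lemma~\ref{FunCal}(\ref{FunCal.4}) applied with $\varepsilon = \delta = 1/n$ for $A_n$ (which yields $\tau(f_{1/n}(a)) \leq d_\tau(a)$ after letting $\delta$ slightly smaller, or by Lemma~\ref{LemTrace} directly), and from $(a - 1/n)_+ \leq a$ combined with monotonicity of $d_\tau$ for $B_n$.

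Next I would use the two halves of Lemma~\ref{FunCal}(\ref{FunCal.4}) to bracket $A_n$ and $B_n$. The right-hand inequality, taken with $\delta = 1/n$, gives $d_\tau((a - 1/n)_+) \leq \tau(f_{1/n}(a))$ for every $\tau \in \QT^1_2(A)$, and hence $A_n \leq B_n$. The left-hand inequality, taken with $\varepsilon = 1/n$ and $\delta = 1/(8n)$ (which is permitted since $\delta \leq \varepsilon/8$), gives $\tau(f_{1/n}(a)) \leq d_\tau((a - 1/(8n))_+)$, so taking the supremum over $\tau$ yields $B_{8n} \leq A_n$. Combining these two estimates produces the squeeze $B_{8n} \leq A_n \leq B_n$. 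Since $(B_n)$ converges, the subsequence $(B_{8n})$ has the same limit, and the squeeze forces $\lim_n A_n = \lim_n B_n$, from which the equivalence of (\ref{Oscillation-Equvallence.1}) and (\ref{Oscillation-Equvallence.2}) is immediate.

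The proof is conceptually straightforward once the sandwich of Lemma~\ref{FunCal}(\ref{FunCal.4}) is in hand; the only piece of bookkeeping is the index shift from $n$ to $8n$, which is harmless because both conditions are expressed as limits of decreasing sequences, so the limit along any subsequence coincides with the full limit.
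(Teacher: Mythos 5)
Your proof is correct and follows essentially the same route as the paper: both directions come from the two halves of the sandwich in Lemma~\ref{FunCal}(\ref{FunCal.4}), with $\delta=1/n$ giving $A_n\leq B_n$ and the choice $\ep=1/n$, $\delta=1/(8n)$ giving $B_{8n}\leq A_n$. Packaging this as a squeeze between monotone sequences is a slightly cleaner presentation than the paper's two separate implications (which handle the index shift via a subsequence/uniform-convergence argument), but the underlying idea is identical.
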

\begin{proof}
We first prove the implication ``(\ref{Oscillation-Equvallence.2}) $\Rightarrow$ (\ref{Oscillation-Equvallence.1})". Suppose 
\begin{equation}\label{Eq1.2025.04.16}
\lim_{n \to \infty} \sup_{\tau \in \QT^1_2 (A)}  \left\{ d_{\tau} (a) -  d_{\tau} ((a-\tfrac{1}{n})_+) \right\}=0.
\end{equation}
 Using Lemma~\ref{FunCal}(\ref{FunCal.4}) with $1/n$ in place of $\delta$, we get
 \[
 d_{\tau} (a) -  \tau \left(f_{\tfrac{1}{n}}(a) \right)
 \leq
 d_{\tau} (a) -  d_{\tau} \left( (a- \tfrac{1}{n})_+ \right)
 \quad
 \mbox{ for all } \tau \in \QT_2^{1}(A).
 \]
 This implies that
 \[
 \sup_{\tau \in \QT^1_2 (A) }  \left\{d_{\tau} (a) -  \tau \left(f_{\tfrac{1}{n}}(a) \right) \right\}
 \leq
\sup_{\tau \in \QT^1_2 (A) }  \left\{ d_{\tau} (a) -  d_{\tau} \left(\big(a-\tfrac{1}{n}\big)_+\right)
\right\}.
 \]
 Therefore, using this and using (\ref{Eq1.2025.04.16}), 
 $\lim_{n \to \infty}\sup_{\tau \in \QT^1_2 (A) }  \left\{d_{\tau} (a) -  \tau (f_{\tfrac{1}{n}}(a) \right\}=0$ and, by Definition~\ref{OscilDfn}, we have $\omega(a)=0$. 
 
 We prove the implication ``(\ref{Oscillation-Equvallence.1}) $\Rightarrow$ (\ref{Oscillation-Equvallence.2})".
 Now assume that 
 \begin{equation}
 \label{Eq1.2025.05.08}
 \lim_{n \to \infty} \sup_{\tau \in \QT^1_2 (A) } \left\{ d_{\tau} (a) -  \tau ((a-\tfrac{1}{n})_+) \right\}=0.
 \end{equation}
  Set $g_n (\tau) = \tau \left( f_{\tfrac{1}{n}}(a) \right)$. So, by (\ref{Eq1.2025.05.08}),
 $g_n$ converges to $\widehat{a}$ uniformly on $\QT^1_2 (A)$. Set $n_k = 8/k$. So, we have
$
 g_{n_k} (\tau) = \tau \left( f_{\tfrac{1}{n_k}} (a) \right)$.
 Since $g_{n_k}$ is a subsequence of $g_n$, it follows that 
  $g_{n_k}$  also converges to $\widehat{a} (\tau)$ uniformly on $\QT^1_2 (A)$. 
  This implies that 
  \[
  \lim_{n \to \infty} \sup_{\tau \in \QT^1_2 (A) }  \left\{ d_{\tau} (a) -  \tau  \left(f_{\tfrac{8}{n} } (a)\right) \right\}=0.
  \]
 We use Lemma~\ref{FunCal}(\ref{FunCal.4}) with $8/n$ in place of 
$\ep$ and $1/n$ in place of  $\delta$ to get 
\[
\tau (f_{8/n} (a)) \leq d_{\tau} \left((a-\tfrac{1}{n})_+\right).
\]
We use this to get 
$
 d_{\tau} (a) -  d_\tau \left( (a -\tfrac{1}{n})_+ \right)
 \leq
 d_{\tau} (a) -  \tau \left(f_{\tfrac{8}{n}}(a) \right)$. 
  Thus,
  \[
\lim_{n \to \infty} \sup_{\tau \in \QT^1_2 (A) }  \left\{  d_{\tau} (a) -  d_\tau \left((a -\tfrac{1}{n})_+ \right) \right\}
 \leq
\lim_{n \to \infty} \sup_{\tau \in \QT^1_2 (A) }  \left\{  d_{\tau} (a) -  d_{\tau} \left(f_{\tfrac{8}{n}}(a) \right) \right\}=0.
 \]
 This completes the proof. 
\end{proof}

For a given $a$ in a Cuntz semigroup $S$, we know that 
$\widehat{a}$ is not continuous in general. 
In the following theorem, we turn to the promised connection between 
 continuity of the rank of a full element, the normalized rank ratio function, and  its
oscillation. 
\begin{thm}
\label{ContRank}
Let $A$ be a unital  C*-algebra with $\QT_{2}^1(A)\neq \emptyset$, and let $a\in \cup_{n=1}^{\infty} M_n(A)_+$  be full with $\| a\| = 1$. 
Then the following are equivalent:
\begin{enumerate}
\item
\label{ContRank.a}
$\lim_{n \to \infty} \rho_{[1_A]} \left([a], [(a-\tfrac{1}{n})_+]\right)=1$.
\item
\label{ContRank.b}
$\lim_{n \to \infty} \sup_{\tau \in \QT^1_2 (A) }  \left\{ d_{\tau} (a) -  d_{\tau} ((a-\tfrac{1}{n})_+) \right\}=0$.
\item
\label{ContRank.c}
$\omega(a)=0$.
\item
\label{ContRank.d}
$\widehat{[a]}$ is continuous on $\cF_{[1_A]} ( \Cu (A))$. 
\end{enumerate}
\end{thm}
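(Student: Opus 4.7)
The equivalence (\ref{ContRank.b})$\Leftrightarrow$(\ref{ContRank.c}) is already supplied by \Lem{Oscillation-Equvallence}. So the work is to establish (\ref{ContRank.a})$\Leftrightarrow$(\ref{ContRank.b}) and (\ref{ContRank.b})$\Leftrightarrow$(\ref{ContRank.d}). Throughout I will use \Rmk{rmkHandelElliott} to identify $\cF_{[1_A]}(\Cu(A))$ with $\QT_2^1(A)$ via $\tau \mapsto d_\tau$, so that $\widehat{[a]}$ corresponds to $\tau \mapsto d_\tau(a)$. Since $\|(a-\tfrac1n)_+ - a\| \to 0$, \Lem{FuSeq.El.Lem} provides $n_0 \in \N$ such that $(a-\tfrac1n)_+$ is full for every $n \geq n_0$; fix such $n_0$ and a $k \in \N$ with $a \in M_k(A)_+$, so that $d_\tau(a) \leq k$ for every $\tau \in \QT_2^1(A)$.

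For (\ref{ContRank.a})$\Leftrightarrow$(\ref{ContRank.b}), \Prp{Prp.rhoequiavalent}(\ref{Prp.rhoequiavalent.a}) applied to the full elements $a$ and $(a - \tfrac1n)_+$ (for $n \geq n_0$) gives
\[
\rho_{[1_A]}\bigl([a], [(a-\tfrac1n)_+]\bigr)
 = \sup_{\tau \in \QT_2^1(A)} \frac{d_\tau(a)}{d_\tau((a-\tfrac1n)_+)}.
\]
Set $\eta_0 = \inf_{\tau} d_\tau((a-\tfrac1{n_0})_+)$, which is strictly positive by \Lem{InfofFullElem}(\ref{InfofFullElem.c}). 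Since the sequence $d_\tau((a-\tfrac1n)_+)$ is increasing in $n$, we have $\eta_0 \leq d_\tau((a-\tfrac1n)_+) \leq d_\tau(a) \leq k$ uniformly in $\tau$ for $n \geq n_0$. Writing $\frac{d_\tau(a)}{d_\tau((a-1/n)_+)} = 1 + \frac{d_\tau(a) - d_\tau((a-1/n)_+)}{d_\tau((a-1/n)_+)}$ and taking suprema yields the two-sided estimate
\[
\frac{1}{k}\sup_{\tau}\!\Bigl\{d_\tau(a) - d_\tau((a-\tfrac1n)_+)\Bigr\}
\leq
\rho_{[1_A]}\bigl([a], [(a-\tfrac1n)_+]\bigr) - 1
\leq
\frac{1}{\eta_0}\sup_{\tau}\!\Bigl\{d_\tau(a) - d_\tau((a-\tfrac1n)_+)\Bigr\},
\]
from which (\ref{ContRank.a})$\Leftrightarrow$(\ref{ContRank.b}) is immediate upon sending $n \to \infty$.

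For (\ref{ContRank.b})$\Leftrightarrow$(\ref{ContRank.d}), the key inputs are that $\QT_2^1(A)$ is weak-$*$ compact (since $A$ is unital and quasitraces are normalized), that $\tau \mapsto \tau(c)$ is continuous for each $c \in A$, and the functional-calculus squeeze from \Lem{FunCal}(\ref{FunCal.4}):
\[
\tau\bigl(f_{8/n}(a)\bigr) \leq d_\tau\bigl((a-\tfrac1n)_+\bigr) \leq \tau\bigl(f_{1/n}(a)\bigr) \leq d_\tau(a).
\]
Both sequences $\tau \mapsto \tau(f_{8/n}(a))$ and $\tau \mapsto \tau(f_{1/n}(a))$ are continuous (functional calculus on $A$), increasing in $n$ (since $f_\epsilon$ is pointwise decreasing in $\epsilon$), and, via the squeeze together with $d_\tau((a-\tfrac1n)_+) \to d_\tau(a)$ (which holds because $[a] = \sup_n [(a-\tfrac1n)_+]$ and functionals preserve suprema of increasing sequences), converge pointwise to $d_\tau(a)$. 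If (\ref{ContRank.d}) holds, the limit $\tau \mapsto d_\tau(a)$ is continuous, so Dini's theorem on the compact space $\QT_2^1(A)$ gives uniform convergence of $\tau(f_{8/n}(a))$ to $d_\tau(a)$; combined with $d_\tau(a) - d_\tau((a-\tfrac1n)_+) \leq d_\tau(a) - \tau(f_{8/n}(a))$ this yields (\ref{ContRank.b}). Conversely, if (\ref{ContRank.b}) holds, the squeeze $d_\tau(a) - \tau(f_{1/n}(a)) \leq d_\tau(a) - d_\tau((a-\tfrac1n)_+)$ gives uniform convergence of the continuous functions $\tau(f_{1/n}(a))$ to $\tau \mapsto d_\tau(a)$, whose continuity then follows from the fact that a uniform limit of continuous functions is continuous; this is (\ref{ContRank.d}) after transporting along the homeomorphism of \Rmk{rmkHandelElliott}.

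The principal obstacles are twofold: securing the uniform lower bound $d_\tau((a-\tfrac1n)_+) \geq \eta_0$ needed to convert ratios into differences in (\ref{ContRank.a})$\Leftrightarrow$(\ref{ContRank.b}) (handled by using \Lem{FuSeq.El.Lem} and \Lem{InfofFullElem}(\ref{InfofFullElem.c}) to exploit fullness of the truncations for $n$ large), and the invocation of Dini's theorem in (\ref{ContRank.d})$\Rightarrow$(\ref{ContRank.b}), which requires the compactness of $\QT_2^1(A)$ and the careful choice of the continuous approximants $\tau(f_{8/n}(a))$ rather than the a priori only lower-semicontinuous rank functions $\widehat{[(a-\tfrac1n)_+]}$.
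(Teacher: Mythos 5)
Your proposal is correct and follows essentially the same route as the paper: the equivalence (\ref{ContRank.a})$\Leftrightarrow$(\ref{ContRank.b}) via Proposition~\ref{Prp.rhoequiavalent}(\ref{Prp.rhoequiavalent.a}) together with a uniform positive lower bound on $d_\tau((a-\tfrac1n)_+)$ coming from fullness of the truncations, the equivalence with $\omega(a)=0$ via Lemma~\ref{Oscillation-Equvallence} and the squeeze of Lemma~\ref{FunCal}(\ref{FunCal.4}), and Dini's theorem on $\QT_2^1(A)$ for the continuity statement. The only differences are cosmetic — you link (\ref{ContRank.d}) directly to (\ref{ContRank.b}) rather than to (\ref{ContRank.c}), and you package (\ref{ContRank.a})$\Leftrightarrow$(\ref{ContRank.b}) as a single two-sided inequality instead of two $\ep$-$\delta$ arguments — which is a clean streamlining of the same ideas.
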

\begin{proof}
Since $a$ is full and $\lim_{n \to \infty} \left(a-\tfrac{1}{n}\right)_+ =a$, it follows from Lemma~\ref{FuSeq.El.Lem} that there exists
 $n_0 \in \N$ such that 
 $\left(a-\tfrac{1}{n}\right)_+$ is full for all $n\geq n_0$.
 Set  $\eta = \inf_{\tau \in \QT_{2}^{1} (A)} d_\tau (a)$. It follows from Lemma~\ref{InfofFullElem}(\ref{InfofFullElem.c}) that
$\eta >0$. 
 
We prove the implication ``(\ref{ContRank.b})  $\Rightarrow$ (\ref{ContRank.a})". So assume that
\[
\lim_{n \to \infty} \sup_{\tau \in \QT^1_2 (A) } \left\{ d_{\tau} (a) -  d_{\tau} ((a-\tfrac{1}{n})_+) \right\}=0.
\] 
 Let $\ep \in (0, \infty)$. 
 Now set $\delta = \tfrac{1}{2} \cdot \min \left( \eta, \tfrac{\ep \eta}{1+\ep} \right)$.
 It is clear that $ \tfrac{\dt}{\eta - \dt} < \ep$. We choose $n_1 \in \N$ such that $n_1\geq n_0$ and, 
 for all $n \geq n_1$, we have 
 \begin{equation}
 \label{Eq1.2025.03.08}
 d_{\tau} (a) - d_{\tau} 
 \left(
 (a-\tfrac{1}{n})_+ 
 \right) < \delta
 \quad
 \mbox{for all } \tau \in \QT_{2}^{1} (A).
 \end{equation}
Using this, we get,  for all $n \geq n_1$ and all $\tau \in \QT_{2}^{1} (A)$,
\begin{equation*}
0< \eta - \delta < d_{\tau} (a) - \dt < d_{\tau} 
 \left(
 (a-\tfrac{1}{n})_+ 
 \right).
\end{equation*}
This implies that, for all $n \geq n_1$, 
\begin{equation}
\label{Eq2.2025.03.08}
0< 
\left[ d_{\tau} \left( (a-\tfrac{1}{n})_+  \right)\right]^{-1} < \tfrac{1}{\eta - \dt}
\quad
 \mbox{for all } \tau \in \QT_{2}^{1} (A).
\end{equation}
Now, for all $n \geq n_1$ and all $\tau \in \QT_{2}^{1} (A)$, multiply (\ref{Eq1.2025.03.08}) by 
$\left[ d_{\tau} \left( (a-\tfrac{1}{n})_+  \right)\right]^{-1}$, we get
\begin{equation}
\frac{d_{\tau} (a) }{d_{\tau} \left( (a-\tfrac{1}{n})_+  \right)}
\leq
1+ \frac{\dt }{d_{\tau} \left( (a-\tfrac{1}{n})_+  \right)}
\underset{\text{(\ref{Eq2.2025.03.08})}}{<} 
 1+ \frac{\dt }{\eta - \dt} < 1+ \ep. 
\end{equation}
This implies that, for all $n \geq n_1$, we have
\[
 1\leq \sup_{\tau \in \QT^1_2(A)} \left( \frac{d_{\tau} (a) }{d_{\tau} \left( (a-\tfrac{1}{n})_+  \right)} \right) < 1+ \ep.
 \] 
Using Proposition~\ref{Prp.rhoequiavalent}(\ref{Prp.rhoequiavalent.a}),
we get,
for all $n \geq n_1$,
\[
 1\leq \rho_{[1_A]} \left( [a], [(a-\tfrac{1}{n})_+]\right)< 1+ \ep.
 \]
So, (\ref{ContRank.a}) follows. 

We prove the implication ``(\ref{ContRank.a}) $\Rightarrow$ (\ref{ContRank.b})". 
So, suppose 
$
\lim_{n \to \infty} \rho_{[1_A]} \left([a], [(a-\tfrac{1}{n})_+]\right)=1$. 
We choose $k \in \N$ such that $a \in M_k (A)_+$. Thus $d_{\tau} (a) \leq k$ for 
all $\tau \in \QT_{2}^{1} (A)$. Let $\ep>0$. We choose $m_0 \in \N$ such that $m_0 \geq n_0$ and, for all $m \geq m_0$, we have 
$\rho_{[1_A]} \left([a], [(a-\tfrac{1}{n})_+]\right)-1 <\frac{\ep}{2k}$. 
Using Proposition~\ref{Prp.rhoequiavalent}(\ref{Prp.rhoequiavalent.a}), we get, for all $m \geq m_0$ 
and for all $\tau \in \QT_{2}^{1} (A)$,
\[
\frac{d_{\tau} (a) }{d_{\tau} \left( (a-\tfrac{1}{m})_+  \right)} -1 < \frac{\ep}{2k}.\]
Using this and the fact that $d_{\tau} \left( (a-\tfrac{1}{m})_+  \right) \leq d_{\tau} (a)\leq k$, we get, 
for all $m \geq m_0$, 
\[
d_{\tau} (a) - d_{\tau} \left( (a-\tfrac{1}{m})_+  \right) < \frac{\ep}{2k} \cdot d_{\tau} \left( (a-\tfrac{1}{m})_+  \right) < \ep
\quad
\mbox{
for all } \tau \in \QT_{2}^{1} (A).
\]
So (\ref{ContRank.b}) follows. 

Now, 
we prove the implication ``(\ref{ContRank.d})  $\Rightarrow$ (\ref{ContRank.c})".
For $n\in \N$, define $g_n \colon \QT_2^{1} (A) \to [0, \infty)$ by $g_n (\tau) = \tau  \big( f_{\tfrac{1}{n}} (a) \big)$.
Since $g_n \leq g_{n+1}$ and 
$g_n (\tau) \to \widehat{a} (\tau)$, and 
$\widehat{a}$ is continuous, it follows from 
Dini's Theorem that $(g_n)_{n=1}^{\infty}$
 converges uniformly to $\widehat{a}$ on 
 $\QT_2^{1} (A)$. So (\ref{ContRank.c}) follows. 
 
 Finally, we prove the implication``(\ref{ContRank.c})  $\Rightarrow$ (\ref{ContRank.d})".
 Since 
 \[
 \lim_{n \to \infty} \sup_{\tau \in \QT^1_2 (A)} 
\left\{ 
d_{\tau} (a) -  \tau \big( f_{\tfrac{1}{n}} (a) \big) \right\}=0,
\]
 it follows that $(g_n)_{n \in \N}$
converges uniformly to $\widehat{[a]}$ on 
 $\QT_2^{1} (A)$. Using the fact that $g_n$ is continuous for all $n \in\N$, we get $\widehat{a}$ is continuous. 
 
 It follows from Lemma~\ref{Oscillation-Equvallence} that
 (\ref{ContRank.b})  is equivalent (\ref{ContRank.c}). 
 This completes the proof. 
\end{proof}
If the C*-algebra $A$ in the above theorem is residually stably finite, then it follows from Proposition~\ref{Prp.rhoequiavalent}(\ref{Prp.rhoequiavalent.c}) that $\rho_{[1_A]}$ can be replaced with $\rho$. 
\section{The relative radius of comparison and rank ratio function}

The aim of this section is to shed light on the relationships between the relative radius of comparison and rank ratio function. The main question is: Given a sequence of full elements $(x_n)_{n \in \N}$ in a $\Cu$-semi group $S$, what can be determined  about $\lim_{n \to \infty} \rc(S, x_n)$? We try to answer this first. Further, we define a function called the radius of comparison function and its reciprocal. We discuss the main properties of them. 
This function plays an important role in capturing nonclassifiable C*-algebras. 

The following lemma indicate the potential values of relative of comparison of a given $\Cu$-semigroup $S$, if the rank ratio function is zero at a pair of full elements $S$, no matter whether $S$ has  a normalized functionl or not. 
\begin{lem}\label{RkRation0}
Let $S$ be a $\Cu$-semigroup. Let $x, y \in \Full(S)$. 
Suppose that $\rho (x, y) =0$, then: 
\begin{enumerate}
\item\label{RkRation0.a_000}
$\rc(S, y) \leq \rc(S, n \cdot x)$ for all $n \in \N$. 
\item\label{RkRation0.a}
If $\rc (S, x) < \infty$, then $\rc(S, y)=0$.
\item\label{RkRation0.a_0}
If $\rho (y, x)=0$, then 
$\rc(S, y) = \rc(S, x)$. 
\item\label{RkRation0.c}
If  $\rho (y, x)=0$ and $\rc(S, y)<\infty$, then 
$\rc(S, x) = \rc(S, y)=0$.
\end{enumerate}
\end{lem}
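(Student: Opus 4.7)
The plan is to translate $\rho(x,y)=0$ into a functional-theoretic condition and then reduce all four parts to Proposition~\ref{rcProperties}(\ref{rcProperties.c_0}). Since $x,y\in\Full(S)$, Lemma~\ref{Rkiszero} gives $\cF_y(S)=\emptyset$; a short rescaling argument (any $\lambda\in\cF(S)$ with $0<\lambda(y)<\infty$ would yield $\lambda/\lambda(y)\in\cF_y(S)$, a contradiction) upgrades this to $\widehat{y}(\cF(S))\subseteq\{0,\infty\}$, and since $y$ is full in a nonzero $S$ we have $\lambda_\infty(y)=\infty$, so in fact $\Im(\widehat{y})=\{0,\infty\}$.

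For part~(\ref{RkRation0.a}), the image condition $\Im(\widehat{y})=\{0,\infty\}$ together with $\rc(S,x)<\infty$ puts us exactly in the hypotheses of Proposition~\ref{rcProperties}(\ref{rcProperties.c_0}) applied to the pair $(x,y)$, yielding $\rc(S,y)=0$. For part~(\ref{RkRation0.a_000}), the case $\rc(S,n\cdot x)=\infty$ is automatic; otherwise Proposition~\ref{rcProperties}(\ref{rcProperties.d}) gives $\rc(S,x)=n\cdot \rc(S,n\cdot x)<\infty$, and part~(\ref{RkRation0.a}) then yields $\rc(S,y)=0\leq \rc(S,n\cdot x)$.

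For part~(\ref{RkRation0.a_0}), the extra hypothesis $\rho(y,x)=0$ makes the situation symmetric: Lemma~\ref{Rkiszero} applied in reverse (with the same rescaling) gives $\Im(\widehat{x})=\{0,\infty\}$, so Proposition~\ref{rcProperties}(\ref{rcProperties.c_0}) is available for both orderings of the pair. A case split on $\rc(S,x)$ finishes things. If $\rc(S,x)<\infty$, part~(\ref{RkRation0.a}) gives $\rc(S,y)=0$, and reapplying part~(\ref{RkRation0.a}) in the swapped form (using $\rc(S,y)=0<\infty$, $\rho(y,x)=0$, and $\Im(\widehat{x})=\{0,\infty\}$) forces $\rc(S,x)=0$, so both vanish. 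If $\rc(S,x)=\infty$, then $\rc(S,y)$ cannot be finite, else the swapped (\ref{RkRation0.a}) would yield $\rc(S,x)=0$, contradicting $\rc(S,x)=\infty$; hence $\rc(S,y)=\infty=\rc(S,x)$. Finally, part~(\ref{RkRation0.c}) is an immediate combination: $\rc(S,y)<\infty$ together with $\rho(y,x)=0$ feeds the swapped form of (\ref{RkRation0.a}) to give $\rc(S,x)=0$, and then (\ref{RkRation0.a_0}) forces $\rc(S,y)=0$ as well.

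The argument is entirely structural; there is no genuine obstacle. The only small subtlety is the rescaling step that upgrades $\cF_y(S)=\emptyset$ to $\Im(\widehat{y})\subseteq\{0,\infty\}$, and once that is in place, Proposition~\ref{rcProperties}(\ref{rcProperties.c_0}) together with the homogeneity formula from Proposition~\ref{rcProperties}(\ref{rcProperties.d}) does all the work.
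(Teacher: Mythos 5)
Your proof is correct, and the reduction is sound, but the logical architecture is reversed relative to the paper's. The paper proves part~(\ref{RkRation0.a_000}) first and directly: $\rho(x,y)=0$ gives $\widehat{x}\leq\tfrac{1}{n}\widehat{y}$, i.e.\ $\widehat{n\cdot x}\leq\widehat{y}$, for every $n$, so Proposition~\ref{rcProperties}(\ref{rcProperties.a_0}) yields $\rc(S,y)\leq\rc(S,n\cdot x)$; part~(\ref{RkRation0.a}) then falls out by letting $n\to\infty$ via the homogeneity formula of Proposition~\ref{rcProperties}(\ref{rcProperties.d}). You instead prove part~(\ref{RkRation0.a}) first, by routing through Lemma~\ref{Rkiszero} and Proposition~\ref{rcProperties}(\ref{rcProperties.c_0}), and then recover part~(\ref{RkRation0.a_000}) from it together with homogeneity; both derivations are valid, though the paper's version of~(\ref{RkRation0.a_000}) is slightly more economical since it never needs the image condition $\Im(\widehat{y})=\{0,\infty\}$ for that part. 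For part~(\ref{RkRation0.a_0}) the two arguments genuinely diverge: the paper observes that the two hypotheses $\rho(x,y)=\rho(y,x)=0$ together with fullness force $\widehat{x}=\widehat{y}$ as functions on $\cF(S)$ (both ranks take values in $\{0,\infty\}$ and vanish at exactly the same functionals, namely the zero functional), and then quotes Proposition~\ref{rcProperties}(\ref{rcProperties.b}); your case split on whether $\rc(S,x)$ is finite, using the symmetric form of part~(\ref{RkRation0.a}) in both directions, reaches the same conclusion without ever establishing $\widehat{x}=\widehat{y}$. The paper's route gives the stronger intermediate fact that the two ranks coincide, which is worth knowing; your route buys a purely order-theoretic argument that only ever manipulates the values $\rc(S,\cdot)$. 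Your rescaling step upgrading $\cF_y(S)=\emptyset$ to $\Im(\widehat{y})\subseteq\{0,\infty\}$ is exactly the observation the paper makes implicitly (compare Remark~\ref{Fulfunctional}), so there is no gap there.
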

\begin{proof}
We prove (\ref{RkRation0.a_000}). 
Since $\rho(x, y)=0$, it follow from  that, for every $n \in \N$, we have $\widehat{x} \leq \frac{1}{n} \cdot \widehat{y}$. 
So, for every $n \in \N$, we have $\lambda(n \cdot x) \leq \lambda (y)$ for all $\lambda \in \cF(S)$. 
Using Proposition~\ref{rcProperties}(\ref{rcProperties.a_0}), we have $\rc (S, y) \leq   \rc(S, n \cdot x)$ for all $n \in \N$. 

Part (\ref{RkRation0.a}) follows immediately from (\ref{RkRation0.a_000}) and Proposition~\ref{rcProperties}(\ref{rcProperties.d}).

We prove (\ref{RkRation0.a_0}). 
Since $\rho(x, y)=\rho(y, x)= 0$, it follows from 
Lemma~\ref{Rkiszero} that $\cF_{x} (S) = \cF_{y} (S) = \emptyset$. 
So $
\Im (\widehat{x}) = \Im (\widehat{y})
= \{0, \infty\}$. This implies that 
$\lambda (x)= \lambda(y)$ all $\lambda \in \cF(S)$. 
Then, by Proposition~\ref{rcProperties}(\ref{rcProperties.b}), we have $\rc (S, x)= \rc (S, y)$.  
 
Part (\ref{RkRation0.c}) follows immediately from (\ref{RkRation0.a}) and (\ref{RkRation0.a_0}).

\end{proof}

In the setting of C*-algebras, the purely infinite ones are examples satisfying Lemma~\ref{RkRation0}(\ref{RkRation0.c}) (see Example~\ref{Exa.PurelyInfinite}). 
\begin{rmk}\label{Fulfunctional}
Let $S$ be a $\Cu$-semigroup. Let $x, y \in \Fu (S)$ with $x\leq y$. Let $\lambda \in \cF_y (S)$. Since $x$ is full and $x\le y$,
we have $0<\lambda(x)< \infty$. So, 
$\tfrac{1}{\lambda (x)} \cdot \lambda \in \cF_x(S)$, and,  therefore, $\cF_x(S)\neq \emptyset$. 
\end{rmk}

Let $S$ be a $\Cu$-semigroup and let $x, y \in \Fu (S)$ with 
$x \leq y$. It immediate follows from Proposition~\ref{rcProperties}(\ref{rcProperties.a_1}) that $\rc (S, y) \leq \rc (S, x)$. The fist part of the following theorem provides a better lower bound for
 $\rc (S, x)$.
 The second part of the following theorem provides an upper bound for $\rc (S, x)$, which is based on the rank ratio function at $(y ,x)$. 
\begin{thm}\label{Rkrc}
Let $S$ be a $\Cu$-semigroup. Let $x, y \in \Full(S)$ with $x\leq y$. Let $\cF_y (S) \neq \emptyset$. Then 
\begin{enumerate}
\item \label{Rkrc.a}
$\frac{1}{\rho(x, y)} \cdot \rc (S, y) \leq \rc(S, x)$.
\item\label{Rkrc.b}
If further $\rho(y, x)< \infty$, 
then
\[
\frac{1}{\rho(x, y)} \cdot \rc (S, y) \leq \rc(S, x)
\leq 
\rho(y, x) \cdot \rc(S, y). 
\]
\end{enumerate}
\end{thm}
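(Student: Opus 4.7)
The plan is to reduce both bounds to a single tool---Proposition~\ref{rcProperties}(\ref{rcProperties.a_0}), which says that an inequality $\widehat{u} \leq \eta \cdot \widehat{v}$ translates into $\tfrac{1}{\eta} \cdot \rc(S,v) \leq \rc(S,u)$---combined with an infimum approximation in the definition of $\rho$. Before doing anything else, I would verify that the quantities in the statement are well-defined. By Lemma~\ref{RkRatioProperty}(\ref{RkRatioProperty.b}), since $x \in \Fu(S)$ and $\cF_y(S) \neq \emptyset$, one has $0 < \rho(x,y) \leq 1$, so $\tfrac{1}{\rho(x,y)} \in [1, \infty)$. Since $x \leq y$ and $x$ is full, Remark~\ref{Fulfunctional} gives $\cF_x(S) \neq \emptyset$, and Lemma~\ref{RkRatioProperty}(\ref{RkRatioProperty.c}) then yields $\rho(y,x) \geq 1$; in particular, under the hypothesis of (\ref{Rkrc.b}), the product $\rho(y,x) \cdot \rc(S,y)$ is meaningful.

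For part~(\ref{Rkrc.a}), I would exploit the definition of $\rho(x,y)$ as an infimum of $\cR(\widehat{x}, \widehat{y})$. Given any $\ep > 0$, choose $r_\ep \in \cR(\widehat{x}, \widehat{y})$ with $r_\ep < \rho(x,y) + \ep$, so that $\widehat{x} \leq r_\ep \cdot \widehat{y}$. Proposition~\ref{rcProperties}(\ref{rcProperties.a_0}) applied with $\eta = r_\ep$ then gives
\[
\tfrac{1}{\rho(x,y)+\ep} \cdot \rc(S,y) \;\leq\; \tfrac{1}{r_\ep} \cdot \rc(S,y) \;\leq\; \rc(S,x).
\]
Letting $\ep \searrow 0$ yields $\tfrac{1}{\rho(x,y)} \cdot \rc(S,y) \leq \rc(S,x)$. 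The edge case $\rc(S,y) = \infty$ needs a separate line: since $x \leq y$, Proposition~\ref{rcProperties}(\ref{rcProperties.a_1}) gives $\rc(S,y) \leq \rc(S,x)$, forcing both sides to equal $\infty$, which is consistent with the stated inequality (with the convention $\tfrac{1}{\rho(x,y)} \cdot \infty = \infty$).

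For the upper bound in part~(\ref{Rkrc.b}), I would run the symmetric argument with the roles of $x$ and $y$ swapped. Using $\rho(y,x) < \infty$, for each $\ep > 0$ select $s_\ep \in \cR(\widehat{y}, \widehat{x})$ with $s_\ep < \rho(y,x) + \ep$, so $\widehat{y} \leq s_\ep \cdot \widehat{x}$. Proposition~\ref{rcProperties}(\ref{rcProperties.a_0}), now with $(u,v) = (y,x)$, gives $\tfrac{1}{s_\ep} \cdot \rc(S,x) \leq \rc(S,y)$, hence $\rc(S,x) \leq s_\ep \cdot \rc(S,y) \leq (\rho(y,x) + \ep) \cdot \rc(S,y)$; letting $\ep \searrow 0$ yields the desired bound when $\rc(S,y) < \infty$, and the case $\rc(S,y) = \infty$ is trivial because $\rho(y,x) \geq 1$ makes the right-hand side $\infty$. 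The main obstacle is not analytic but rather the careful bookkeeping of these $0$ and $\infty$ edge cases; the substantive content is the single inequality from Proposition~\ref{rcProperties}(\ref{rcProperties.a_0}) combined with the fact that infima in $(0,\infty)$ can be approached by admissible $r$.
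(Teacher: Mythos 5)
Your proposal is correct and follows essentially the same route as the paper: both reduce each bound to Proposition~\ref{rcProperties}(\ref{rcProperties.a_0}) via an $\ep$-approximation of the infimum defining $\rho$, after securing $0<\rho(x,y)\leq 1$ and $\rho(y,x)\geq 1$ from Lemma~\ref{RkRatioProperty} and Remark~\ref{Fulfunctional}. The only cosmetic difference is which side you treat as the trivial infinite edge case; the substance is identical.
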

\begin{proof}
We prove (\ref{Rkrc.a}). 
Since $x\leq y$ if follows from Proposition~\ref{rcProperties}(\ref{rcProperties.a_1})
that $\rc (S, y) \leq \rc (S, x)$.
Since $\cF_y(S)\neq \emptyset$ and $x \leq y$, it follows from Remark~\ref{Fulfunctional} and 
 Lemma~\ref{RkRatioProperty}(\ref{RkRatioProperty.b}) that $0<\rho(x, y)\leq 1$. So $\tfrac{1}{\rho(x, y)}\geq 1$. 
 Now, we prove $\rc(S, y) \leq \rho(x, y) \cdot \rc (S, x)$.
 If $\rc (S, x) = \infty$, then the inequality is trivial, considering the convention $\gamma \cdot \infty = \infty$ for $\gamma \in (0, \infty)$. 
 So, suppose that $\rc (S, x)< \infty$. 
 Assume that  $S$ has $\eta$-comparison relative to $x$.
 Since $\rho(x, y)\leq 1$, for every $\ep>0$, there exists $r>0$ in $\cR(\widehat{x}, \widehat{y})$ such that 
 $r< \rho(x, y) + \ep$. 
 So, we have 
 $
 \widehat{x} \leq r \cdot \widehat{y}
 \leq 
 (\rho(x, y) + \ep)\cdot \widehat{y}$. 
 Using this and Proposition~\ref{rcProperties}(\ref{rcProperties.a_0}), for every $\ep>0$, we get
 $\rc (S, y) \leq  (\rho(x, y) + \ep)\cdot \rc(S, x)$. 
 Thus, 
  $\rc (S, y) \leq  \rho(x, y) \cdot \rc (S, x)$.
  
 Now,  we prove (\ref{Rkrc.b}).
 The first part of the inequality follows from (\ref{Rkrc.a}). So, we only prove that 
$\rc(S, x)
\leq 
\rho(y, x) \cdot \rc(S, y)$. 
 If $\rc (S, y) = \infty$, then the equation is trivial, using the fact that $\rc (S, y)\leq \rc(S, x)$ and considering the convention $\gamma \cdot \infty = \infty$ for $\gamma \in (0, \infty)$. 
 Since $\cF_y (S) \neq \emptyset$, the fact that $x\leq y$, and $x \in \Fu (S)$, we have 
 $\cF_x(S) \neq \emptyset$ by Remark~\ref{Fulfunctional}. Then, 
  Lemma~\ref{RkRatioProperty}(\ref{RkRatioProperty.c}), we have $\rho(y, x) \geq 1$. 
  Since $\rho(y, x)< \infty$, for every $\ep>0$, there exists $r>0$ in $\cR(\widehat{y}, \widehat{x})$ such that 
  $r< \rho(y, x) + \ep$. 
  So, we have
$
 \widehat{y} \leq r \cdot \widehat{x}
 \leq 
 (\rho(y, x) + \ep)\cdot \widehat{x}$. 
 Using this and Proposition~\ref{rcProperties}(\ref{rcProperties.a_0}), for every $\ep>0$, we get
 $
\frac{1}{\rho(y, x) + \ep} \cdot \rc (S, x) \leq \rc(S, y)$. 
 Thus, $\frac{1}{\rho(y, x)} \cdot \rc (S, x) \leq \rc(S, y)$. 
 This completes the proof.  
  \end{proof}
  
  See Example~\ref{nonunitalrcrankratio}, Example~\ref{RCPand1}, and Example~\ref{rceveryGroup} for many concrete examples satisfying Theorem~\ref{Rkrc}. 

The following theorem determines an upper bound and a lower bound for the limit of the radii of comparison of a 
given $\Cu$-semigroup relative to an increasing sequence in the $\Cu$-semigroup based on the rank ratio structure of the sequence. 
\begin{thm}\label{MainThmForRCLim}
Let $S$ be a $\Cu$-semigroup. Let $y$ and a sequence $(y_n)_{n=1}^{\infty}$ be in $\Full(S)$ with $y_n\leq y_{n+1} \leq y$ for all $n \in \N$. Assume that $\cF_y (S) \neq \emptyset$ and
$\lim_{n \to \infty} \rho(y, y_n) < \infty$. 
Then 
\[
\frac{1}{\lim_{n \to \infty} \rho(y_n, y)} \cdot  \rc (S, y) \leq
 \lim_{n \to \infty} \rc(S, y_n)
\leq 
\lim_{n \to \infty} \rho(y, y_n) \cdot \rc(S, y). 
\]
\end{thm}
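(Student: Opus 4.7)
The plan is to reduce the claim to a termwise application of Theorem~\ref{Rkrc}(\ref{Rkrc.b}) to the pair $(y_n, y)$, and then pass to the limit in $n$.

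First, I would verify that the hypotheses of Theorem~\ref{Rkrc}(\ref{Rkrc.b}) are met for every sufficiently large $n$. The sequence $(\rho(y, y_n))_{n}$ is decreasing by Lemma~\ref{IncDesrho}(\ref{IncDesrho.b}) and, by hypothesis, has a finite limit, so there exists $n_0 \in \N$ with $\rho(y, y_n) < \infty$ for all $n \geq n_0$. For such $n$, the elements $y_n, y$ are full, $y_n \leq y$, and $\cF_y(S) \neq \emptyset$, so Theorem~\ref{Rkrc}(\ref{Rkrc.b}) yields
\[
\frac{1}{\rho(y_n, y)} \cdot \rc(S, y) \;\leq\; \rc(S, y_n) \;\leq\; \rho(y, y_n) \cdot \rc(S, y).
\]

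Next, I would confirm that all three limits appearing in the conclusion exist and lie in well-understood ranges. By Theorem~\ref{LimofRank}(\ref{LimofRank.a}), $(\rho(y_n, y))_n$ is increasing and bounded above by $1$, with limit in $(0, 1]$; positivity uses $\cF_y(S) \neq \emptyset$ together with fullness of each $y_n$. By Theorem~\ref{LimofRank}(\ref{LimofRank.b}), combined with Remark~\ref{Fulfunctional} (which gives $\cF_{y_n}(S) \neq \emptyset$ from $y_n \leq y$ full and $\cF_y(S) \neq \emptyset$), the sequence $(\rho(y, y_n))_n$ is decreasing with limit in $[1, \infty)$; finiteness is by hypothesis. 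Finally, $y_n \leq y_{n+1}$ together with Proposition~\ref{rcProperties}(\ref{rcProperties.a_1}) shows $(\rc(S, y_n))_n$ is decreasing in $[0, \infty]$, so $\lim_n \rc(S, y_n)$ exists in $[0, \infty]$.

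Then I would pass to the limit in the displayed inequality. On the right, finiteness of $\lim_n \rho(y, y_n)$ lets the limit commute with multiplication by $\rc(S, y)$, giving $\lim_n \rc(S, y_n) \leq \lim_n \rho(y, y_n) \cdot \rc(S, y)$. On the left, since $\rho(y_n, y) > 0$ for every $n$ and $\lim_n \rho(y_n, y) > 0$, the reciprocal limit equals the limit of the reciprocals, yielding $\tfrac{1}{\lim_n \rho(y_n, y)} \cdot \rc(S, y) \leq \lim_n \rc(S, y_n)$.

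The only delicate points are the boundary cases $\rc(S, y) \in \{0, \infty\}$ and invoking the $[0, \infty]$ conventions ($\gamma \cdot \infty = \infty$ for $\gamma \in (0, \infty)$) correctly: if $\rc(S, y) = \infty$ then Proposition~\ref{rcProperties}(\ref{rcProperties.a_1}) forces $\rc(S, y_n) = \infty$ for all $n$, and both outer bounds equal $\infty$ because the scalars $\tfrac{1}{\lim_n \rho(y_n,y)}$ and $\lim_n \rho(y,y_n)$ are strictly positive; if $\rc(S, y) = 0$ then both outer bounds equal $0$ (using $\lim_n \rho(y, y_n) < \infty$) and $\lim_n \rc(S, y_n) = 0$ is forced by the sandwich. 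I do not expect a genuine obstacle; the main content is Theorem~\ref{Rkrc} and the rest is bookkeeping of limits in $[0, \infty]$.
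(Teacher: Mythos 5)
Your proposal is correct and follows essentially the same route as the paper: a termwise application of Theorem~\ref{Rkrc}(\ref{Rkrc.b}) for all sufficiently large $n$ (using that the decreasing sequence $(\rho(y,y_n))_n$ is eventually finite), combined with the monotonicity facts from Theorem~\ref{LimofRank} and Proposition~\ref{rcProperties}(\ref{rcProperties.a_1}), and a separate treatment of the case $\rc(S,y)=\infty$. Your explicit discussion of the boundary case $\rc(S,y)=0$ is slightly more detailed than the paper's, but the argument is the same.
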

\begin{proof}
Since $\cF_y(S)\neq \emptyset$ and 
$y_n \precsim y$ for all $n \in \N$, it follows from Remark~\ref{Fulfunctional}
that 
$\cF_{y_n} (S) \neq \emptyset$ for all $n \in \N$.  
Since $y_1 \leq y_2 \leq \ldots \leq y_n \leq y_{n+1} \leq \ldots y$, it follows from
Proposition~\ref{rcProperties.b}(\ref{rcProperties.b}) that 
\[
\rc(S, y) \leq \ldots \leq \rc (S, y_{n+1})
\leq \rc (S, y_n) \leq \ldots \leq \rc (S, y_2) \leq \rc (S, y_1). 
\]
Case 1: if $\rc (S, y)=\infty$, then $\rc(S, y_n)=\infty$ for all $n \in \N$. 
Therefore, 
$\lim_{n \to \infty} \rc (S, y_n) = \rc (S, y) =\infty$ and 
the inequality is trivially true, 
considering the convention $\gamma \cdot \infty = \infty$ for $\gamma \in (0, \infty)$. 

Case 2: Suppose $\rc (S, y)< \infty$. 
It follows from Theorem~\ref{LimofRank}(\ref{LimofRank.a}) that 
the sequence  
$(\rho(y_n, y))_{n=1}^{\infty}$ in an increasing sequence in $\mathbb{R}$ and 
 $0<\lim_{n \to \infty} \rho(y_n, y) \leq 1$.
 Also, by Theorem~\ref{LimofRank}(\ref{LimofRank.b}), we know that
 $(\rho(y, y_n))_{n=1}^{\infty}$ in an decreasing sequence bounded below by $1$. Since 
$\lim_{n \to \infty} \rho(y, y_n) < \infty$, 
we choose $N \in \N$ such that, for all $n \geq N$, we have $\rho(y, y_n) <\infty$. Using this and Theorem~\ref{Rkrc}(\ref{Rkrc.b}), we 
get, for all $n\geq N$, 
\begin{equation}\label{EQ5.2025.03.12}
\frac{1}{\rho(y_n, y)} \cdot \rc (S, y) \leq \rc(S, y_n)
\leq 
\rho(y, y_n) \cdot \rc(S, y). 
\end{equation}
This implies that $\rc(S, y_n)<\infty$ for $n \geq N$, and therefore, the limit of the sequence $(\rc(A, y_n))_{n=N}^{\infty}$ exists and is in $\mathbb{R}$. So, by taking the 
limit of each sides of (\ref{EQ5.2025.03.12}) as $n \to \infty$, the results follows. 
\end{proof}
See Example~\ref{EXlimiISZero} and Example~\ref{RankratioMeasureTheo} for more concrete examples satisfying Theorem~\ref{MainThmForRCLim}.  
Also, Theorem~\ref{MainThmForRCLim} is not valid if the sequence is not increasing (see Example~\ref{EXlimiISZero}(\ref{EXlimiISZero.b}))

As you have seen in the poof of the theorem above, when $\lim_{n \to \infty} \rho(y, y_n) < \infty$ and $\rc (S, y)<\infty$, then
$\lim_{n \to \infty} \rc(S, y_n)$ exits and is in $\mathbb{R}$.  This this limit is not
 equal to $\rc (S, y)$ in general. However, if 
 $\lim_{n \to \infty} \rho(y, y_n) =1$ this is true and we have the following corollary.
\begin{cor}\label{rclimrho1}
Let $S$ be a $\Cu$-semigroup. Let $y$ and a sequence $(y_n)_{n=1}^{\infty}$ be in $\Full(S)$ with $y_n\leq y_{n+1} \leq y$  for all $n \in \N$. Assume that $\cF_y (S) \neq \emptyset$ and 
$\lim_{n \to \infty} \rho(y, y_n) =1$. 
Then 
$
 \lim_{n \to \infty} \rc(S, y_n)
= \rc(S, y)$. 
\end{cor}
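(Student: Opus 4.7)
My plan is to derive this as an immediate consequence of Theorem~\ref{MainThmForRCLim} by showing that $\lim_{n\to\infty} \rho(y_n, y) = 1$ holds as well, so that both the upper and lower bounds in that theorem collapse to $\rc(S, y)$. First, I would dispose of the case $\rc(S, y) = \infty$ separately: since $y_n \leq y$, Proposition~\ref{rcProperties}(\ref{rcProperties.a_1}) gives $\rc(S, y_n) \geq \rc(S, y)$ for every $n$, so $\lim \rc(S, y_n) = \infty = \rc(S, y)$. Henceforth assume $\rc(S, y) < \infty$; then the hypothesis $\lim \rho(y, y_n) = 1 < \infty$ lets me apply Theorem~\ref{MainThmForRCLim}, whose upper bound at once yields $\lim \rc(S, y_n) \leq \rc(S, y)$.

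The remaining work is the matching lower bound $\rc(S, y) \leq \lim \rc(S, y_n)$, and via Theorem~\ref{MainThmForRCLim} this reduces to proving $\lim \rho(y_n, y) = 1$. By Theorem~\ref{LimofRank}(\ref{LimofRank.a}) the sequence $(\rho(y_n, y))_n$ is increasing, bounded above by $1$, with strictly positive limit (using $y_n \in \Fu(S)$ and $\cF_y(S) \neq \emptyset$), so it is enough to prove a suitable lower bound on this limit. The key step is the duality inequality
\[
\rho(y_n, y) \cdot \rho(y, y_n) \geq 1
\]
for every $n$, from which passing to the limit and invoking $\lim \rho(y, y_n) = 1$ gives $\lim \rho(y_n, y) \geq 1$, hence equality.

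To prove the duality inequality I would fix any $\lambda_0 \in \cF_y(S)$ (nonempty by hypothesis), so $\lambda_0(y) = 1$. Since $y_n \leq y$, one has $\lambda_0(y_n) \leq 1 < \infty$; and since $y_n$ is full, $\infty \cdot y_n$ is the largest element of $S$, forcing $\lambda_0(y) \leq \infty \cdot \lambda_0(y_n)$, whence $\lambda_0(y_n) > 0$. For any $r \in \cR(\widehat{y_n}, \widehat{y})$ and $s \in \cR(\widehat{y}, \widehat{y_n})$, evaluation at $\lambda_0$ gives $\lambda_0(y_n) \leq r\lambda_0(y)$ and $\lambda_0(y) \leq s\lambda_0(y_n)$; multiplying and cancelling the finite positive factors $\lambda_0(y_n)\lambda_0(y)$ yields $rs \geq 1$, and taking infima gives the claim.

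The main obstacle is exactly the duality inequality: it hinges on locating a functional at which both $\widehat{y}$ and $\widehat{y_n}$ are simultaneously finite and strictly positive, which is precisely what the combination $\cF_y(S) \neq \emptyset$, $y_n \leq y$, and $y_n \in \Fu(S)$ supplies. Once the duality is in hand, everything else is bookkeeping with Theorem~\ref{MainThmForRCLim}, and the two collapsed bounds give $\rc(S, y) \leq \lim \rc(S, y_n) \leq \rc(S, y)$.
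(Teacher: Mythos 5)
Your proof is correct, and its skeleton — apply Theorem~\ref{MainThmForRCLim} and collapse both bounds — is exactly how the paper intends the corollary to follow. The one place you do genuinely more work than necessary is the lower bound: you prove the duality inequality $\rho(y_n,y)\cdot\rho(y,y_n)\geq 1$ in order to conclude $\lim_{n\to\infty}\rho(y_n,y)=1$, but this is not needed. Theorem~\ref{LimofRank}(\ref{LimofRank.a}) already tells you that $\lim_{n\to\infty}\rho(y_n,y)$ lies in $(0,1]$, so its reciprocal is $\geq 1$ and the lower bound of Theorem~\ref{MainThmForRCLim} reads
\[
\rc(S,y)\;\leq\;\frac{1}{\lim_{n\to\infty}\rho(y_n,y)}\cdot\rc(S,y)\;\leq\;\lim_{n\to\infty}\rc(S,y_n),
\]
with no need to pin down the exact value of $\lim_{n\to\infty}\rho(y_n,y)$. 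That said, your duality argument is sound: the functional $\lambda_0\in\cF_y(S)$ is finite and strictly positive on each $y_n$ (finiteness from $y_n\leq y$, positivity from fullness of $y_n$), and the cancellation you perform is legitimate precisely because $0<\lambda_0(y_n)\lambda_0(y)<\infty$; the only caveat is that $\cR(\widehat{y},\widehat{y_n})$ could be empty for small $n$, which is harmless since $\lim_{n}\rho(y,y_n)=1$ forces $\rho(y,y_n)<\infty$ eventually and the limit argument only needs large $n$. The inequality $\rho(x,y)\cdot\rho(y,x)\geq 1$ (valid whenever some functional is finite and nonzero on both elements) is a worthwhile observation in its own right, and your separate treatment of the case $\rc(S,y)=\infty$ via Proposition~\ref{rcProperties}(\ref{rcProperties.a_1}) is correct though also subsumed by the theorem.
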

In Proposition~\ref{ExtremeInfiniteRho} and Proposition~\ref{ExtremeZeroRho}, we focus on extreme cases where $\lim_{n \to \infty} \rho(y, y_n) = \infty$ or $\lim_{n \to \infty} \rho(y, y_n) = 0$. 
\begin{prp}\label{ExtremeInfiniteRho}
Let $S$ be a $\Cu$-semigroup. Let $y$ and a sequence $(y_n)_{n=1}^{\infty}$ be in $\Full(S)$ with $y_n\leq y_{n+1} \leq y$ for all $n \in \N$. Assume that $\cF_y (S) = \emptyset$ and
$\lim_{n \to \infty} \rho(y, y_n) = \infty$. 
Then 
\begin{enumerate}
\item\label{ExtremeInfiniteRho.a}
$\cF_{y_n} (S) \neq \emptyset$ for all $n \in \N$. 
\item\label{ExtremeInfiniteRho.b}
If $\rc (S, y_m)< \infty$ for some $m \in \N$, then
\[
 \lim_{n \to \infty} \rc(S, n \cdot y_n)
= \rc(S, y)=0. 
\]
\end{enumerate}
\end{prp}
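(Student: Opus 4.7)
For part (\ref{ExtremeInfiniteRho.a}), I would first invoke Theorem~\ref{LimofRank}(\ref{LimofRank.b}) to observe that $(\rho(y, y_n))_{n \in \N}$ is a decreasing sequence in $[0, \infty]$; a decreasing sequence whose limit is $\infty$ must be constantly $\infty$, so $\rho(y, y_n) = \infty$ for every $n \in \N$. Suppose toward contradiction that $\cF_{y_n}(S) = \emptyset$ for some $n$. Since $y_n \in \Fu(S)$, any $\lambda \in \cF(S)$ with $\lambda(y_n) = 0$ is forced to vanish identically (because $s \leq \infty \cdot y_n$ for all $s \in S$ gives $\lambda(s) \leq \infty \cdot \lambda(y_n) = 0$), while any $\lambda$ with $0 < \lambda(y_n) < \infty$ would yield a normalized functional $\lambda(y_n)^{-1}\lambda \in \cF_{y_n}(S)$. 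Hence every $\lambda \in \cF(S)$ satisfies $\lambda(y_n) \in \{0, \infty\}$. A short case check then shows $\widehat{y} \leq r \cdot \widehat{y_n}$ on all of $\cF(S)$ for every $r > 0$: if $\lambda(y_n) = 0$ both sides vanish by the previous observation, while if $\lambda(y_n) = \infty$ the right-hand side is $\infty$. So $\rho(y, y_n) = 0$, contradicting $\rho(y, y_n) = \infty$.

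For part (\ref{ExtremeInfiniteRho.b}), the same fullness-and-normalization argument, now applied to $y$ itself with the hypothesis $\cF_y(S) = \emptyset$, gives $\lambda(y) \in \{0, \infty\}$ for every $\lambda \in \cF(S)$; both values occur (via the zero functional and $\lambda_\infty$), so $\Im(\widehat{y}) = \{0, \infty\}$. Since $y_m \in \Fu(S)$ and $\rc(S, y_m) < \infty$, Proposition~\ref{rcProperties}(\ref{rcProperties.c_0}) directly yields $\rc(S, y) = 0$. For the limit, I would use Proposition~\ref{rcProperties}(\ref{rcProperties.d}) to rewrite $\rc(S, n \cdot y_n) = \tfrac{1}{n} \cdot \rc(S, y_n)$. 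Because $y_m \leq y_n$ for $n \geq m$, Proposition~\ref{rcProperties}(\ref{rcProperties.a_1}) gives $\rc(S, y_n) \leq \rc(S, y_m) < \infty$ for all such $n$. Therefore
\[
0 \leq \rc(S, n \cdot y_n) \leq \tfrac{1}{n} \cdot \rc(S, y_m) \longrightarrow 0,
\]
producing $\lim_{n \to \infty} \rc(S, n \cdot y_n) = 0 = \rc(S, y)$.

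The only step requiring a bit of care is the reduction in part (\ref{ExtremeInfiniteRho.a}): one must invoke fullness of $y_n$ twice, once to exclude functionals taking a finite nonzero value at $y_n$ (they would normalize to an element of $\cF_{y_n}(S)$) and once to propagate $\lambda(y_n) = 0$ to $\lambda \equiv 0$. Everything else is a direct formal consequence of the monotonicity of the rank ratio function recorded in Theorem~\ref{LimofRank} together with the algebraic properties of the relative radius of comparison gathered in Proposition~\ref{rcProperties}.
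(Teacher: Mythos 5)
Your proposal is correct, and its skeleton matches the paper's: both parts hinge on the observation that $(\rho(y,y_n))_{n}$ is decreasing (Theorem~\ref{LimofRank}(\ref{LimofRank.b})), so the hypothesis forces $\rho(y,y_n)=\infty$ for every $n$, and on the bound $\rc(S,n\cdot y_n)\leq \tfrac{1}{n}\cdot\rc(S,y_m)\to 0$. The differences are in which auxiliary results you invoke. For part~(\ref{ExtremeInfiniteRho.a}) the paper simply cites Lemma~\ref{Rkiszero} (for $x,y$ full, $\rho(x,y)=0$ iff $\cF_y(S)=\emptyset$), whereas you re-derive the relevant implication inline: assuming $\cF_{y_n}(S)=\emptyset$, you use fullness of $y_n$ twice to force $\lambda(y_n)\in\{0,\infty\}$ and then conclude $\rho(y,y_n)=0$, a contradiction; this is exactly the content of the ``(\ref{Rkiszero.c}) $\Rightarrow$ (\ref{Rkiszero.a})'' direction of that lemma, so nothing is gained or lost. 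For part~(\ref{ExtremeInfiniteRho.b}) the routes genuinely diverge: the paper uses $\rho(y_n,y)=0$ (again via Lemma~\ref{Rkiszero}) together with Lemma~\ref{RkRation0}(\ref{RkRation0.a_000}) to get $\rc(S,y)\leq\rc(S,n\cdot y_n)$, so that a single sandwich delivers both $\rc(S,y)=0$ and the limit; you instead extract $\rc(S,y)=0$ directly from $\Im(\widehat{y})=\{0,\infty\}$ and Proposition~\ref{rcProperties}(\ref{rcProperties.c_0}), and compute the limit separately via Proposition~\ref{rcProperties}(\ref{rcProperties.d}) and (\ref{rcProperties.a_1}). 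Your version bypasses Lemma~\ref{RkRation0} entirely and is marginally more self-contained; the paper's version makes the inequality $\rc(S,y)\leq\rc(S,n\cdot y_n)$ explicit, which is mildly more informative. Both arguments are complete and correct.
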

\begin{proof}
To prove (\ref{ExtremeInfiniteRho.a}), suppose that 
$\rho (y, y_m) < \infty$ for some $m \in \N$. 
It follows from Theorem~\ref{LimofRank}(\ref{LimofRank.b}) that
the sequence  
$(\rho(y, y_n))_{n=1}^{\infty}$ in an decreasing  sequence in $[0, \infty]$. 
This implies that that $\lim_{n \to \infty} \rho(y, y_n) <\infty$ which is a contradiction. 
So $\rho (y, y_n) = \infty$ for all $n \in \N$. 
Now, using Lemma~\ref{Rkiszero}, we get 
$\cF_{y_n} (S) \neq \emptyset$. 

We prove (\ref{ExtremeInfiniteRho.b}).
Using the fact that $\cF_{y} (S) = \emptyset$ together with 
Lemma~\ref{Rkiszero}, we get $\rho(y_n, y)= 0$ for all $n \in \N$. Now it follows from Lemma~\ref{RkRation0}(\ref{RkRation0.a_000}) that 
\[
\rc (S, y) \leq \rc (S, n\cdot y_n) 
\quad
\mbox{for all } n \in \N.
\]
Using this at the fist step, using the fact that $n \cdot y_m \leq n \cdot y_n$ for all $n \geq m$ together with Proposition~\ref{rcProperties}(\ref{rcProperties.a_1}) at the second step, and using Proposition~\ref{rcProperties}(\ref{rcProperties.d}) at the third step, we get
\[
\rc(S, y) \leq \rc (S, n \cdot y_n) \leq 
\rc (S, n \cdot y_m) = \frac{1}{n} \cdot \rc (S, y_m). 
\]
Using this and the fact that $\rc (S, y_m)< \infty$, we get
\[
\lim_{n \to \infty} \rc(S, n \cdot y_n)
= \rc(S, y)=0,
\] 
as desired.
\end{proof}
We refer to Example~\ref{AlgebraciExample} for 
an example 
that $\lim_{n \to \infty} \rho(y, y_n) = \infty$ and 
that satisfies the above proposition.

\begin{prp}\label{ExtremeZeroRho}
Let $S$ be a $\Cu$-semigroup. Let $y$ and a sequence $(y_n)_{n=1}^{\infty}$ be in $\Full(S)$ with $y_n\leq y_{n+1} \leq y$ for all $n \in \N$. If 
$\lim_{n \to \infty} \rho(y, y_n) = 0$, then: 
\begin{enumerate}
\item\label{ExtremeZeroRho.a}
There is $m \in \N$ such that
$\cF_y(S) = \cF_{y_n}(S) = \emptyset$ 
for all $n \geq m$.
\item\label{ExtremeZeroRho.b}
$
 \lim_{n \to \infty} \rc(S,  y_n)
= \rc (S, y). 
$
\item\label{ExtremeZeroRho.c}
If  $\rc (S, y)< \infty$, then
$ \lim_{n \to \infty} \rc(S,  y_n)
= \rc (S, y)=0$.
\end{enumerate}
\end{prp}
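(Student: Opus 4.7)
The plan is to prove the three parts in sequence, feeding each into the next, and reducing everything to Lemma~\ref{Rkiszero}, Theorem~\ref{LimofRank}, and Lemma~\ref{RkRation0}. A useful preliminary observation is the following dichotomy for full $y_n\leq y$ in $S$: Lemma~\ref{Rkiszero} gives $\rho(y,y_n)=0$ whenever $\cF_{y_n}(S)=\emptyset$, while Lemma~\ref{RkRatioProperty}(\ref{RkRatioProperty.c}) gives $\rho(y,y_n)\geq 1$ whenever $\cF_{y_n}(S)\neq\emptyset$. Hence $\rho(y,y_n)\in\{0\}\cup[1,\infty]$, and since Theorem~\ref{LimofRank}(\ref{LimofRank.b}) says that $(\rho(y,y_n))_n$ is decreasing with limit $0$, this dichotomy forces $\rho(y,y_n)=0$ for all sufficiently large $n$.

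For~(\ref{ExtremeZeroRho.a}), I would first rule out $\cF_y(S)\neq\emptyset$: otherwise Theorem~\ref{LimofRank}(\ref{LimofRank.b}) would give $\lim_n\rho(y,y_n)\geq 1$, contradicting the hypothesis. Next, I would pick $m$ large enough that $\rho(y,y_n)<1$ for all $n\geq m$; the dichotomy above upgrades this to $\rho(y,y_n)=0$, and Lemma~\ref{Rkiszero} then yields $\cF_{y_n}(S)=\emptyset$ for every such $n$.

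For~(\ref{ExtremeZeroRho.b}) and~(\ref{ExtremeZeroRho.c}) I would fix $n\geq m$ and apply Lemma~\ref{RkRation0} with the substitution $x=y$ and $y=y_n$. The standing hypothesis $\rho(y,y_n)=0$ holds by~(\ref{ExtremeZeroRho.a}), while a second use of Lemma~\ref{Rkiszero} (this time exploiting $\cF_y(S)=\emptyset$) gives the extra hypothesis $\rho(y_n,y)=0$. Thus Lemma~\ref{RkRation0}(\ref{RkRation0.a_0}) yields $\rc(S,y_n)=\rc(S,y)$ for every $n\geq m$, and sending $n\to\infty$ proves~(\ref{ExtremeZeroRho.b}). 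For~(\ref{ExtremeZeroRho.c}), the additional assumption $\rc(S,y)<\infty$ together with the same substitution makes Lemma~\ref{RkRation0}(\ref{RkRation0.a}) applicable and gives $\rc(S,y_n)=0$; combining this with~(\ref{ExtremeZeroRho.b}) forces $\rc(S,y)=0$ as well. I do not anticipate any genuine obstacle; the only point requiring care is keeping the variable roles straight when invoking Lemma~\ref{RkRation0}, so that its standing hypothesis $\rho(x,y)=0$ and its auxiliary hypotheses line up correctly with $\rho(y,y_n)=0$, $\rho(y_n,y)=0$, and $\rc(S,y)<\infty$.
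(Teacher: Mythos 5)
Your proof is correct and follows essentially the same route as the paper: first establish $\cF_y(S)=\cF_{y_n}(S)=\emptyset$ for large $n$, then feed $\rho(y,y_n)=\rho(y_n,y)=0$ into Lemma~\ref{RkRation0}(\ref{RkRation0.a_0}) and Lemma~\ref{RkRation0}(\ref{RkRation0.a}) to get parts~(\ref{ExtremeZeroRho.b}) and~(\ref{ExtremeZeroRho.c}). The only (harmless) divergence is in part~(\ref{ExtremeZeroRho.a}), where the paper argues by a direct functional computation — choosing $m$ with $\rho(y,y_m)<\tfrac{1}{2m}$ so that $m\cdot\lambda(y)\le\lambda(y_m)\le\lambda(y)$ forces $\lambda(y),\lambda(y_m)\in\{0,\infty\}$ — whereas you reach the same conclusion via the dichotomy $\rho(y,y_n)\in\{0\}\cup[1,\infty]$ from Lemma~\ref{Rkiszero} and Lemma~\ref{RkRatioProperty}(\ref{RkRatioProperty.c}) together with Theorem~\ref{LimofRank}(\ref{LimofRank.b}); both arguments are valid.
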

\begin{proof}
We prove (\ref{ExtremeZeroRho.a}). 
We use $\lim_{n \to \infty} \rho(y, y_n) = 0$ to
choose $m \in \N$ such that 
\begin{equation}\label{Eq1.2025.04.08}
\rho(y, y_{m}) < \frac{1}{2m}. 
\end{equation}
 Since $\rho(y, y_{m}) < \infty$, we choose 
$r >0$ in $\cR(\widehat{y}, \widehat{y_m})$ such that 
\begin{equation}
\label{Eq222.2025.04.08}
r < \rho (y, y_m) + \frac{1}{2 m}.
\end{equation}
 Then
\[
\lambda (y) \leq r \cdot \lambda (y_m) 
\overset{\text{(\ref{Eq1.2025.04.08})}}{\underset{\text{(\ref{Eq222.2025.04.08})}}{\leq}}
 \frac{1}{m} \cdot 
\lambda (y_m)
\quad
\mbox{ for all }
\lambda \in \cF(S). 
\]
This implies that 
$m \cdot \lambda (y) \leq \lambda (y_m)$
for all $\lambda \in \cF(S)$.
Since $y_m \leq y$, we have 
$m \cdot \lambda ( y) \leq \lambda (y_m)\leq \lambda (y)$
for all $\lambda \in \cF(S)$.
So $\lambda (y_m), \lambda (y) \in \{0, \infty\}$
for all $\lambda \in \cF(S)$.
This implies that $\cF_y(S)= \cF_{y_m}(S)= \emptyset$
and, therefore, 
by remark~\ref{Fulfunctional}, we have 
$\cF_{y_n}(S)= \emptyset$ for all $n \geq m$. 

We prove (\ref{ExtremeZeroRho.b}).
Using Lemma~\ref{Rkiszero} and (\ref{ExtremeZeroRho.a}), we have $\rho (y, y_m)=\rho(y_m, y)=0$.
Now, we use this and  Lemma~\ref{RkRation0}(\ref{RkRation0.a_0}) to get
$\rc (S, y)=\rc(S, y_m)$. 
Since  $(\rc (S, y_n))_{n=1}^{\infty}$ is an decreasing sequence, we have 
 $\lim_{n \to \infty} \rc(S,  y_n)
= \rc (S, y)$. 

Part (\ref{ExtremeZeroRho.c}) follows from  Lemma~\ref{RkRation0}(\ref{RkRation0.c}), 
 (\ref{ExtremeZeroRho.b}), and the fact that $\rc (S, y)<\infty$. 
\end{proof}

The following functions play a key role in studying non-classifiable C*-algebra. 
\begin{dfn}\label{Rciprical}
Let $S$ be a $\Cu$-semigroup with $\Full (S)\neq \emptyset$. 
\begin{enumerate}
\item 
The \emph{radius of comparison function associated to $S$}, denoted by $\rc (S, \cdot)$, is the function
$\rc (S, \cdot) \colon \Fu (S) \to 
[0, \infty]$ given by $x \mapsto \rc (S, x)$.
\item 
The 
\emph{reciprocal radius of comparison function associated to $S$}, denoted by $\Irc_S$, is defined as follows: 
\[
 \Irc_S \colon \Full(S) \to [0, \infty], 
 \quad
 \quad
 \Irc_S (x) = \frac{1}{\rc(S, x)},\]
with the  convention that
$\frac{1}{\infty}=0$ and $\frac{1}{0}=\infty$.
\end{enumerate}
\end{dfn}
We consider the reciprocal of $\rc (S, \cdot)$ only because of the fact that
$\Irc_S$
is homogeneous and preserves the order. 
Also, the letter ``I"  in the notation $\Irc_S$ stands for 
``(multipilicative) inverse". 

The following proposition provides some important facts about the reciprocal radius of comparison function. 
\begin{prp}\label{IrcProperties}
Let $S$ be a $\Cu$-semigroup with $\Full (S)\neq \emptyset$. Then:
\begin{enumerate}
\item\label{IrcProperties.a}
$\Irc_S$ preserves the order.
\item\label{IrcProperties.b}
$\Irc_S (n. x)= n \cdot \Irc_S(x)$ for all $x \in \Full(S)$ and $n\in \N$.
\item\label{IrcProperties.c}
Let 
$(x_n)_{n \in \N}$ be an increasing sequence in $\Full(S)$ with $\sup_n x_n =x$ 
for $x \in S$.  If $\cF_x(S) \neq \emptyset$ and 
$\lim_{n \to \infty} \rho(x, x_n)=1$, 
then
$\sup_n \Irc_S ( x_n) = \Irc_S (x)$. 
\end{enumerate}
\end{prp}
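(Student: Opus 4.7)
The plan is to derive all three parts directly from results already established in this section, combined with the order-reversing behaviour of the reciprocal map on $[0,\infty]$ under the conventions $1/0=\infty$ and $1/\infty=0$.

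Parts (a) and (b) are immediate. For (a), given $x, y \in \Fu(S)$ with $x \leq y$, Proposition~\ref{rcProperties}(\ref{rcProperties.a_1}) yields $\rc(S, y) \leq \rc(S, x)$, and taking reciprocals reverses this to $\Irc_S(x) \leq \Irc_S(y)$. For (b), Proposition~\ref{rcProperties}(\ref{rcProperties.d}) gives $\rc(S, n \cdot x) = \tfrac{1}{n} \cdot \rc(S, x)$, and taking reciprocals produces $\Irc_S(n \cdot x) = n \cdot \Irc_S(x)$; the boundary values $\rc(S, x) \in \{0, \infty\}$ are verified routinely using the conventions.

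For (c), I first observe that $x \in \Fu(S)$: since $x_1 \in \Fu(S)$ and $x_1 \leq x$, we have $S = \Idl(x_1) \subseteq \Idl(x)$, hence $\Idl(x) = S$. The inequality $\sup_n \Irc_S(x_n) \leq \Irc_S(x)$ is then immediate from part (a), which makes $(\Irc_S(x_n))_n$ an increasing sequence bounded above by $\Irc_S(x)$. For the reverse inequality, the hypotheses of Corollary~\ref{rclimrho1} are satisfied (the $x_n$ are full, $x_n \leq x_{n+1} \leq x$, $\cF_x(S) \neq \emptyset$, and $\lim_n \rho(x, x_n) = 1$), which gives $\lim_{n \to \infty} \rc(S, x_n) = \rc(S, x)$. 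Passing to reciprocals then yields $\lim_n \Irc_S(x_n) = \Irc_S(x)$, and since the sequence is increasing, this limit coincides with the supremum.

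The only point requiring a brief case check is the endpoint behaviour of the reciprocal, since strict continuity on all of $[0, \infty]$ is not automatic. When $\rc(S, x) = 0$ (so $\Irc_S(x) = \infty$), Corollary~\ref{rclimrho1} forces $\rc(S, x_n) \to 0$ and hence $\Irc_S(x_n) \to \infty$, matching the supremum. When $\rc(S, x) = \infty$ (so $\Irc_S(x) = 0$), Proposition~\ref{rcProperties}(\ref{rcProperties.a_1}) combined with $x_n \leq x$ forces $\rc(S, x_n) = \infty$ and $\Irc_S(x_n) = 0$ for every $n$, so both sides are $0$. I do not anticipate any substantial obstacle: the whole argument is essentially a mechanical transfer of the relevant $\rc$-statements through the reciprocal, with the nontrivial input concentrated in the already-proved Corollary~\ref{rclimrho1}.
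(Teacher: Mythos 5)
Your proof is correct and follows essentially the same route as the paper's: parts (a) and (b) by passing the corresponding statements of Proposition~\ref{rcProperties} through the reciprocal, and part (c) by invoking Corollary~\ref{rclimrho1} and then checking the endpoint cases $\rc(S,x)=0$ and $\rc(S,x)=\infty$ separately, exactly as the paper does. Your explicit observation that $x$ is full (so that $\Irc_S(x)$ is defined and the corollary applies) is a small point the paper leaves implicit, but otherwise the arguments coincide.
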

\begin{proof}
To prove (\ref{IrcProperties.a}), let $x, y \in \Fu (S)$ with $x\leq y$. Then, by Proposition~\ref{rcProperties}(\ref{rcProperties.a_1}), we have 
\begin{equation}\label{EQ10.2025.03.12}
\rc (S, y) \leq \rc (S, x).
\end{equation}

Case 1: If $0<\rc (S, y) \leq \rc (S, x)< \infty$, then $\Irc_S(x)=\frac{1}{\rc (S, x)} \leq \frac{1}{\rc (S, y)}=  \Irc(y)$. 

Case 2: If $\rc (S, x) \in \{0, \infty\}$ or 
$\rc (S, y) \in \{0, \infty\}$,
then, the definition of $\Irc_S$ and (\ref{EQ10.2025.03.12}), it is obvious that $\Irc_S (x) \leq \Irc_{S} (y)$. 

Part (\ref{IrcProperties.b}) follows from Proposition~\ref{rcProperties}(\ref{rcProperties.c}) and the definition of 
$\Irc_S$. 

We prove (\ref{IrcProperties.c}).
Since $\lim_{n \to \infty} \rho(x, x_n)=1$, it follows from Corollary~\ref{rclimrho1} that
$ \lim_{n \to \infty} \rc(S, x_n)
= \rc(S, x)$. 
\begin{itemize}
\item[
Case 1] If $0<\rc(S, x)<\infty$, then
\[
\sup_n \Irc_S (x_n) = 
\lim_{n \to \infty} \frac{1}{\rc(S, x_n)}
=
\lim_{n \to \infty} \frac{1}{\rc(S, x)}
=
\Irc_S (x). 
\]
\item[Case 2] If $\rc (S, x)= \infty$, then, by Proposition~\ref{rcProperties}(\ref{rcProperties.a_1}), $\rc (S, x_n)= \infty$. So we have $\Irc_S (x) = \sup_n \Irc_S (x_n) = 0$. 
\item[Case 3] Suppose $\rc (S, x)=0$. 
\begin{itemize}
\item[Case 3.a]
If $\rc (S, x_n)>0$ for $n\in \N$, then
$\lim_{n \to \infty} \frac{1}{\rc (S, x_n)}= \infty$. So we have
$\sup_n \Irc_S (x_n)= \Irc_S (x)=\infty$.
\item[Case 3.b]
If there exist $N\in \N$ such that $\rc (S, x_N)=0$, then $\rc (S, x_n)= \rc(S, x)= 0$ for all $n \geq N$. Therefore, 
$\sup_n \Irc_S (x_n)= \Irc_S (x)=\infty$. 
\end{itemize}
\end{itemize}
This completes the proof. 
\end{proof}

One may ask what the range of $\rc (S, \cdot)$ or $\Irc_S$ is for a given  $\Cu$-semigroup $S$. 
In Example~\ref{Exa.PurelyInfinite}(\ref{Exa.PurelyInfinite.d}), it is shown that
for any purely infinite C*-algebra $A$, the range of
$\rc(\Cu (A), \cdot)$ is $\{0\}$ and, therefore, the range of  
$\Irc_{\Cu(A)}$ is $\{\infty\}$.  In the following lemma, we obtain the range of the radius of comparison function associated to classifiable C*-algebras. 
\begin{lem}\label{IrcClassi}
Let $A$ be a simple unital stably finite C*-algebra.
Suppose that $\rc (\Cu (A), [1_A])=0$. 
Then $\rc\Big({\Cu(A)},  \Fu ( \Cu (A) ) \Big)= \{0\}$.
\end{lem}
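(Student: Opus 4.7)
The plan is to leverage simplicity of $A$ together with compactness of $[1_A]$ in $\Cu(A)$, and then appeal to the homogeneity/monotonicity properties of $\rc(S,\cdot)$ established in Proposition~\ref{rcProperties}. Since $A$ is simple, every nonzero element of $\Cu(A)$ is full; in particular $\infty\cdot x = \infty\cdot[1_A]$ for every $x \in \Fu(\Cu(A))$. The key observation is that $[1_A]$ is a compact element of $\Cu(A)$, because $1_A$ is a projection and so $[(1_A - \ep)_+] = [1_A]$ for every $\ep \in (0,1)$, giving $[1_A] \ll [1_A]$.

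Fixing $x \in \Fu(\Cu(A))$, I would argue as follows. Fullness of $x$ means $[1_A] \leq \infty\cdot x = \sup_n n\cdot x$, and compactness of $[1_A]$ then yields some $n \in \N$ with $[1_A] \leq n\cdot x$. Passing to ranks, $\widehat{[1_A]} \leq n\cdot \widehat{x}$ on $\cF(\Cu(A))$. Proposition~\ref{rcProperties}(\ref{rcProperties.a_0}), applied with the roles of its $x$ and $y$ played by $[1_A]$ and $x$ respectively and with $\eta = n$, then gives
\[
\tfrac{1}{n}\cdot \rc(\Cu(A), x) \leq \rc(\Cu(A), [1_A]) = 0,
\]
whence $\rc(\Cu(A), x) = 0$. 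Since $x \in \Fu(\Cu(A))$ was arbitrary, the range of $\rc(\Cu(A),\cdot)$ on $\Fu(\Cu(A))$ is $\{0\}$.

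There is no real obstacle here: the only point worth verifying carefully is that classes of projections are compact in the Cuntz semigroup so that the compact-containment $[1_A] \leq \infty\cdot x$ produces a finite $n$ with $[1_A] \leq n\cdot x$. Everything else is a one-line application of Proposition~\ref{rcProperties}(\ref{rcProperties.a_0}), and simplicity of $A$ is used only to guarantee $[1_A] \leq \infty\cdot x$ for every nonzero $x \in \Cu(A)$.
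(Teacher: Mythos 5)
Your proof is correct, but it takes a genuinely different and more economical route than the paper's. The paper fixes a nonzero $a$, cuts down to $(a-\ep)_+\sim b$ with $b\in M_n(A)_+$ via Lemma~1.9 of \cite{Ph14}, uses $b\precsim 1_{M_n(A)}$ and Proposition~\ref{Prp.rhoequiavalent}(\ref{Prp.rhoequiavalent.c}) to see that $\rho(n\cdot[1_A],[b])<\infty$, and then invokes the upper bound of Theorem~\ref{Rkrc}(\ref{Rkrc.b}) together with Proposition~\ref{rcProperties}(\ref{rcProperties.d}) before passing back to $[a]$ by monotonicity; that argument leans on the rank-ratio machinery and on stable finiteness (a normalized quasitrace is needed for $\cF_{n\cdot[1_A]}(\Cu(A))\neq\emptyset$ in Theorem~\ref{Rkrc} and for Proposition~\ref{Prp.rhoequiavalent}). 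You instead exploit that $[1_A]$ is a compact element, so fullness of $x$ gives $[1_A]\leq n\cdot x$ for some finite $n$, and then a single application of Proposition~\ref{rcProperties}(\ref{rcProperties.a_0}) yields $\tfrac{1}{n}\cdot\rc(\Cu(A),x)\leq\rc(\Cu(A),[1_A])=0$. This bypasses the rank ratio function entirely, does not use stable finiteness, and in fact proves the stronger, purely algebraic statement that in any $\Cu$-semigroup admitting a compact full element $e$ with $\rc(S,e)=0$, every full element has relative radius of comparison zero. The only point to make explicit is the compact-containment step: you need that $[1_A]\ll[1_A]$ (true since $(1_A-\ep)_+=(1-\ep)1_A\sim 1_A$) together with $[1_A]\leq\infty\cdot x=\sup_n n\cdot x$ forces $[1_A]\leq n_0\cdot x$; with the paper's formulation of $\ll$ this uses the standard fact that $u\ll v\leq w$ implies $u\ll w$, which the paper itself uses without comment (e.g., in the proofs of Lemma~\ref{InfofFullElem}(\ref{InfofFullElem.a}) and Lemma~\ref{RkRatioProperty}(\ref{RkRatioProperty.f})), so this is not a gap. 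What the paper's longer route buys is an illustration of how Theorem~\ref{Rkrc} and the rank ratio function are deployed, which is thematically central to the article; what your route buys is brevity, weaker hypotheses, and greater generality.
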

\begin{proof}
Since $A$ is simple and every nonzero element is full, 
it suffices to show that, 
$\rc (\Cu (A), [a])=0$ for every $a \in (A\otimes \cK)_+ \setminus \{0\}$. 
Let $a \in (A\otimes \cK)_+ \setminus \{0\}$. 
We may assume $\| a \|=1$. Choose $\ep >0$ such $(a-\ep)_+ \neq 0$. Now, we use  Lemma~1.9 of \cite{Ph14} to choose $n \in \N$ and $b \in M_n (A)_+$ such that $(a-\ep)_+ \sim b$. Since $b \leq \| b\| \cdot 1_{M_n (A)}$, it follows from Proposition~\ref{Prp.rhoequiavalent}(\ref{Prp.rhoequiavalent.c}) 
that $\rho(n \cdot [1_A], [b]) <\infty$. 
Now, using Proposition~\ref{rcProperties}(\ref{rcProperties.a_1}) at the first step, 
using Theorem~\ref{Rkrc}(\ref{Rkrc.b}) at the second step, and using  Proposition~\ref{rcProperties}(\ref{rcProperties.d}) and $\rc (\Cu (A), [1_A])=0$ at the last step step, we get 
\[
\rc \left(\Cu (A), [(a-\ep)_+] \right)= \rc \left(\Cu (A), [b] \right)
\leq 
\rho(n \cdot [1_A], [b]) \cdot \rc \left(\Cu (A), n\cdot [1_A] \right)
=
0.
\]
 Then, by Proposition~\ref{rcProperties}(\ref{rcProperties.a_1}) and the fact that $(a-\ep)_+ \precsim a$, we have 
$\rc (\Cu (A), [a])=0$,  as desired. 
\end{proof}

In the above lemma, it is clear by the definition~ that $\Irc_{\Cu (A)} (\Cu (A)) = \{\infty\}$.

 In general, the rank of an element in a $\Cu$-semigroup have been turned out to be very important, particularly, its association to ``rank problem", i.e.,
the problem of representing every strictly positive, lower semicontinuous, affine function on the functional space of a Cuntz semigroup $S$ as the rank of an element in $S$, which  remains open in the general setting.
This problem is first studied in \cite{DaTo10} and then later in \cite{APRT22, ERS11, Rob13, Thi20} for certain classes of C*-algebras.  
Combing our result here and 
a positive solution to the rank problem for C*-algebras with stable rank one (see \cite{APRT22, Thi20}), we have the following theorem. 

\begin{thm}\label{ThmManufactureRc}
Let $A$ be a unital separable residually stably finite C*-algebra with stable rank one and with no nonzero, finite dimensional quotients. Let $r \in (0, \infty)$. Suppose  that $\rc (\Cu (A), [1_A])=r$. Then 
\[
\rc \big(\Cu (A), \Fu (\Cu (A))\big)= [0, \infty).
\] 
\end{thm}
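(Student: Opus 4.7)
The plan is to prove the equality $\rc(\Cu(A), \Fu(\Cu(A))) = [0, \infty)$ by verifying both directions: (a) every $\rc(\Cu(A), x)$ with $x$ full is finite, and (b) every $s \in [0, \infty)$ is attained.

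For (a), let $x \in \Fu(\Cu(A))$. Since $1_A$ is a projection, $[1_A] \ll [1_A] \leq \infty \cdot x = \sup_n n \cdot x$, so there is $n \in \N$ with $[1_A] \leq n \cdot x$. Combining Proposition~\ref{rcProperties}(\ref{rcProperties.a_1}) with Proposition~\ref{rcProperties}(\ref{rcProperties.d}) yields
\[
\tfrac{1}{n}\rc(\Cu(A), x) = \rc(\Cu(A), n \cdot x) \leq \rc(\Cu(A), [1_A]) = r,
\]
so $\rc(\Cu(A), x) \leq nr < \infty$.

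For (b), I treat $s = 0$ and $s \in (0, \infty)$ separately. The element $\infty \cdot [1_A]$ is the largest element of $\Cu(A)$ (because $[1_A]$ is full) and is itself full; since $r < \infty$, Proposition~\ref{rcProperties}(\ref{rcProperties.c}) gives $\rc(\Cu(A), \infty \cdot [1_A]) = 0$. For $s \in (0, \infty)$, set $\eta = r/s$ and consider $f = \eta \cdot \widehat{[1_A]}$ on $\cF(\Cu(A))$: it is affine, lower semicontinuous, and strictly positive on nonzero functionals (any $\lambda \in \cF(\Cu(A))$ with $\lambda([1_A]) = 0$ must vanish on all of $\Cu(A)$ by unitality). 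By the positive solution of the rank problem for stable rank one C*-algebras in \cite{APRT22, Thi20}---whose hypotheses of separability, stable rank one, and absence of nonzero finite-dimensional quotients match exactly those assumed in the theorem---there exists $x \in \Cu(A)$ with $\widehat{x} = f$. Fullness of $x$ follows by the contrapositive argument of Lemma~\ref{ResiduallyAndFullEquivalent}: if $x$ were not full, residual stable finiteness would furnish a $\lambda \in \cF(\Cu(A))$ with $\lambda([1_A]) = 1$ and $\lambda(x) = 0$, contradicting $\widehat{x}(\lambda) = \eta > 0$. Then Proposition~\ref{rcProperties}(\ref{rcProperties.b}) applied to $(x, [1_A])$ yields $\rc(\Cu(A), x) = r/\eta = s$.

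The only non-routine ingredient is the invocation of the rank problem in (b): the three listed hypotheses on $A$ are precisely what the cited realization results in \cite{APRT22, Thi20} require, and some care is needed to ensure the target $\eta \cdot \widehat{[1_A]}$ falls within the class of functions those theorems realize (in particular, that the realization takes place in $\Cu(A)$ itself and governs the rank on all of $\cF(\Cu(A))$, not just on the normalized slice). Once $x$ with the prescribed rank is produced, the rest of the proof reduces to a direct application of the structural properties of $\rc$ collected in Proposition~\ref{rcProperties}.
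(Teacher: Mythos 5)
Your proposal follows the same core strategy as the paper's proof: realize, via the rank problem for stable rank one C*-algebras (Theorem~7.14 of \cite{APRT22}), an element whose rank is the constant $r/s$ on the normalized functionals, and then use homogeneity of $\rc$ to conclude its radius of comparison is $s$; the value $0$ is obtained from $\infty\cdot[1_A]$ in both arguments. Two differences are worth recording. First, you prove the containment $\rc\big(\Cu(A),\Fu(\Cu(A))\big)\subseteq[0,\infty)$ (your part (a)) via $[1_A]\leq n\cdot x$ together with Proposition~\ref{rcProperties}(\ref{rcProperties.a_1}) and (\ref{rcProperties.d}); this is correct, and the paper's proof does not address this direction even though the stated conclusion is an equality of sets, so this is a genuine improvement in completeness. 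Second, in part (b) you replace the paper's detour through the rank ratio function (Proposition~\ref{Prp.rhoequiavalent}(\ref{Prp.rhoequiavalent.c}) and Theorem~\ref{Rkrc}(\ref{Rkrc.b})) by a direct appeal to Proposition~\ref{rcProperties}(\ref{rcProperties.b}). That is cleaner, but it needs the identity $\widehat{x}=\eta\cdot\widehat{[1_A]}$ on \emph{all} of $\cF(\Cu(A))$, whereas the realization theorem, as invoked in the paper, only gives $\lambda(x)=\eta$ for $\lambda\in\cF_{[1_A]}(\Cu(A))$. You flag this point but do not close it. It does close, in one line: take the realized representative in $M_n(A)_+$ with $n>\eta$; if $\lambda([1_A])=0$ then $\lambda\equiv 0$ (since $[b]\leq\infty\cdot[1_A]$ for every $b$), and if $\lambda([1_A])=\infty$ then fullness of $x$ gives $[1_A]\leq k\cdot x$ for some $k\in\N$, whence $\lambda(x)=\infty$; in either case both sides of the identity agree, so Proposition~\ref{rcProperties}(\ref{rcProperties.b}) applies. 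With that sentence added your argument is complete; the paper instead routes this extension from the normalized slice to all functionals through its residual stable finiteness machinery for $\rho$.
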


\begin{proof}
It is enough to show that 
for any $\eta \in [0, \infty)$, there exist a full element $a \in (A\otimes \cK)_+$
such that $\rc (\Cu (A), [a])= \eta$. 
\begin{itemize}
\item[Case 1]
Assume that $\eta \in (0, \infty)$. 
 Choose $n \in \N$ such that $r \eta^{-1}  < n$ and define 
\[
f \colon \cF_{[1_{A}]} (\Cu (A)) \to (0, \infty], \quad
f(\lambda) = r \eta^{-1}.\]
So $f$ is an affine function
 satisfying the hypothesis of Theorem~7.14 of \cite{APRT22}. 
 Then we choose $a \in M_n (A)_+$ such that 
 \begin{equation}
 \label{EQ1.2025.05.02}
 \lambda ([a]) = r \eta^{-1}
 \quad
 \mbox{ for all } 
\quad 
 \lambda \in \cF_{[1_{A}]} (\Cu (A)).
 \end{equation}
 It follows from Lemma~\ref{ResiduallyAndFullEquivalent} that $a$ is full. 
Now,  using (\ref{EQ1.2025.05.02}) and Proposition~\ref{Prp.rhoequiavalent}(\ref{Prp.rhoequiavalent.c}), we get
 \begin{equation}\label{Eq1.2025.03.29}
\rho ([a], n \cdot [1_{A}]) = 
\sup_{\lambda \in \cF_{[1_{A}]} (\Cu (A)) } 
\left(
 \frac{\lambda ([a])}{\lambda (n \cdot [1_{A}])} \right)=
 \frac{r}{n \eta}, 
 \end{equation}
and
 \begin{equation}\label{Eq2.2025.03.29}
\rho ( n \cdot [1_{A}], [a]) = \sup_{\lambda \in \cF_{[1_{A}]} (\Cu (A)) } 
\left( 
\frac{\lambda (n \cdot [1_{A}])}{\lambda ([a])} \right)= \frac{n\eta}{ r}.
\end{equation}
Therefore, using (\ref{Eq1.2025.03.29}), (\ref{Eq2.2025.03.29}),  and Theorem~\ref{Rkrc}(\ref{Rkrc.b}), 
\begin{equation}
\frac{n \eta}{r} \cdot \rc (\Cu (A), n \cdot [1_{A}]) 
\leq
 \rc(\Cu (A), [a])
\leq 
\frac{n\eta}{ r} \cdot \rc(\Cu (A), n \cdot [1_{A}]). 
\end{equation}
Using this at the first step, using
Proposition~\ref{rcProperties}(\ref{rcProperties.d}) at the second step, and  
using the fact that $\rc(\Cu (A), [1_A])=r$ at the last step, we get 
\[
\rc(\Cu (A), [a])=
\frac{n\eta}{ r} \cdot \rc(\Cu (A), n \cdot [1_{A}])
=
\frac{\eta}{ r} \cdot \rc(\Cu (A), [1_{A}])
=
 \eta. 
\]
\item[Case 2]
 Suppose that $\eta= \infty$. Set $y = \infty \cdot [1_A]$. 
 Then, by Proposition~\ref{rcProperties}(\ref{rcProperties.c}), we have $\rc (\Cu (A), y)=0$. 
\end{itemize} 
 Now,  combining Case 1 and Case 2, the result follows. 
\end{proof}

In proof of the above theorem, since $a \in M_n (A)_+$ is full, it follows from
Remark~\ref{RmkRCHeredit} that 
 $\rc (\Cu \big(\overline{a (A \otimes \cK) a}\big), [a])= \rc (\Cu (A), [a])$.
 Therefore, 
 Theorem~\ref{ThmManufactureRc} in fact tells us that for any $r\in  [0, \infty)$, there exists 
 a full elements in $a \in (A \otimes \cK)_+$
 such  $\rc (\Cu \big(\overline{a (A \otimes \cK) a}\big), [a])=r$.

In the statement of the above theorem, if  
$\rc (\Cu (A), [1_A])=\infty$,  then, by Proposition~\ref{rcProperties}(\ref{rcProperties.a_1}), we have
$
\rc \big(\Cu (A), \Fu ( \W (A) )\big)= \{\infty\}$. 
This is true even if $A$ does not have stably rank one. 

\begin{ntn}
Let $A$ be a C*-algebra, let $G$ be a discrete group, and let $\alpha \colon G \to \Aut (A)$ be an action of $G$ on $A$. We define
\[
\iota \colon A \to A\rtimes_{\alpha} G
\]
to be the inclusion maps. 
 By abuse of notation, we often denote the amplification of this map to matrices or to the stabilization also by the same symbol. 
\end{ntn}
Combing Corollary~5.21 in \cite{ASV23} and 
Theorem~\ref{ThmManufactureRc}, we get the following theorem. 
\begin{prp}\label{rceveryGroup}
For every $m \in \Ntwo$ and every $\eta \in (0, \frac{1}{m})$, there exist
finite group 
$G$ with $|G|=m$, a
simple unital separable AH algebra
 $A$ with stable rank one, and an outer group action $\alpha \colon G \to \Aut (A)$ such that:
 \begin{enumerate}
 \item \label{rceveryGroup.a}
$\rho ([1_A], [p])= m$ and 
$\rho ([p], [1_A])=\frac{1}{m}$. 
 \item \label{rceveryGroup.b}
 $\eta =\rc\big(\Cu (A\rtimes_\alpha G), [\iota (1_A)] \big) =\rc \big(\Cu (A), [1_A]\big)  = \frac{1}{m} \cdot \rc \big(\Cu (A), [p]\big)$. 
 \item\label{rceveryGroup.c}
  Both the rang of 
 $\rc( \Cu (A), \cdot )$ and the range $\rc (\Cu(A\rtimes_\alpha G), \cdot)$
 are $[0, \infty)$. 
 \end{enumerate}
\end{prp}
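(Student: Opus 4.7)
The plan is to derive the proposition by combining Corollary~5.21 of \cite{ASV23} with Theorem~\ref{ThmManufactureRc}. First, I would invoke Corollary~5.21 of \cite{ASV23} with the given data $m\in\Ntwo$ and $\eta\in(0,1/m)$; this already supplies the group $G$ of order $m$, the simple unital separable AH algebra $A$ with stable rank one, the outer action $\alpha\colon G \to \Aut(A)$, and (crucially) a projection $p\in A$ with $\widehat{[1_A]} = m\cdot\widehat{[p]}$ on $\cF(\Cu(A))$, together with the two radius-of-comparison equalities $\rc(\Cu(A),[1_A]) = \rc(\Cu(A\rtimes_\alpha G),[\iota(1_A)]) = \eta$. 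The projection $p$ in the statement is the one produced here, and it satisfies $\tau(p)=1/m$ for every normalized $2$-quasitrace.

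Once this data is in hand, part~(\ref{rceveryGroup.a}) follows from Proposition~\ref{Prp.rhoequiavalent}(\ref{Prp.rhoequiavalent.c}): since $A$ is simple (hence residually stably finite) and both $1_A$ and $p$ are full, the supremum formula for $\rho$ applies. Using $\lambda([p])=1/m$ and $\lambda([1_A])=1$ for every $\lambda\in\cF_{[1_A]}(\Cu(A))$, we read off
\[
\rho([1_A],[p]) = \sup_{\lambda\in\cF_{[1_A]}(\Cu(A))}\frac{\lambda([1_A])}{\lambda([p])} = m
\quad\andeqn\quad
\rho([p],[1_A]) = \frac{1}{m}.
\]
The first two equalities of part~(\ref{rceveryGroup.b}) are precisely what Corollary~5.21 of \cite{ASV23} provides; the third, $\rc(\Cu(A),[1_A]) = \frac{1}{m}\rc(\Cu(A),[p])$, is a direct application of Proposition~\ref{rcProperties}(\ref{rcProperties.b}) to the identity $\widehat{[p]} = \frac{1}{m}\widehat{[1_A]}$.

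For part~(\ref{rceveryGroup.c}), I would apply Theorem~\ref{ThmManufactureRc} twice. For $A$ itself the hypotheses are straightforward: unital separable, residually stably finite (since $A$ is simple stably finite), stable rank one, and with no nonzero finite-dimensional quotients (since $A$ is simple and infinite-dimensional, forced by $\rc(\Cu(A),[1_A])=\eta>0$); combined with $0<\eta<\infty$, Theorem~\ref{ThmManufactureRc} gives that the range of $\rc(\Cu(A),\cdot)$ is $[0,\infty)$. For $A\rtimes_\alpha G$, outerness of $\alpha$ on the simple algebra $A$ ensures that the crossed product is again simple, and Corollary~5.21 of \cite{ASV23} also ensures that $A\rtimes_\alpha G$ has stable rank one. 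Since $A\rtimes_\alpha G$ is then simple, unital, separable, infinite-dimensional (as it contains $A$), and satisfies $0<\rc(\Cu(A\rtimes_\alpha G),[\iota(1_A)])=\eta<\infty$, a second application of Theorem~\ref{ThmManufactureRc} yields the range $[0,\infty)$ for $\rc(\Cu(A\rtimes_\alpha G),\cdot)$.

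The only obstacle is bookkeeping: one must check that every structural property required by Theorem~\ref{ThmManufactureRc} (in particular, stable rank one for $A\rtimes_\alpha G$, outerness of $\alpha$, and the exact rank identity $\widehat{[1_A]} = m\cdot\widehat{[p]}$) is genuinely packaged inside the conclusion of Corollary~5.21 of \cite{ASV23}. Once those verifications are made, the argument reduces to a short synthesis of Proposition~\ref{Prp.rhoequiavalent}, Proposition~\ref{rcProperties}, and Theorem~\ref{ThmManufactureRc} with the input provided by \cite{ASV23}.
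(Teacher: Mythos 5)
Your proposal is correct and follows essentially the same route as the paper: extract the data from \cite{ASV23}, compute $\rho([1_A],[p])$ and $\rho([p],[1_A])$ via Proposition~\ref{Prp.rhoequiavalent}(\ref{Prp.rhoequiavalent.c}) from the identity $\tau(p)=1/m$, and obtain part~(\ref{rceveryGroup.c}) from Theorem~\ref{ThmManufactureRc}. The only cosmetic differences are that the paper unpacks Construction~5.8, Theorems~5.15 and~5.20, and Lemma~5.17 of \cite{ASV23} rather than citing Corollary~5.21 wholesale (which is where your flagged ``bookkeeping'' is actually carried out), and it derives the third equality of~(\ref{rceveryGroup.b}) from the two-sided bound in Theorem~\ref{Rkrc}(\ref{Rkrc.b}) rather than your slightly more direct appeal to Proposition~\ref{rcProperties}(\ref{rcProperties.b}).
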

\begin{proof}
Let $m \in \Ntwo$. Choose a finite group $G$ with 
$m = |G|$. Let $\eta \in (0, \tfrac{1}{m})$. 
Let $A$ be the simple separable unital AH algebra with stable rank one as in Construction~5.8 of \cite{ASV23} and let $\alpha \colon G \to \Aut (A)$ be the pointwise outter action as in Construction~5.8 of \cite{ASV23}. 
By Theorem~5.15 of \cite{ASV23} and Theorem~5.20 of \cite{ASV23}, we have 
\begin{equation}\label{Eq.1_0.2025.04.07}
\Rrc(A\rtimes_\alpha G, \iota  (1_A) ) =\Rrc (A, 1_A) = \eta. 
\end{equation}
Adopt the assumptions and formulas of Notation~5.16 and Construction~5.8 of \cite{ASV23}. 
For every $n \in \N$, set  
$p_n = \Gamma_{n, 0} (p)$ and 
$p_\infty = \Gamma_{\infty, 0} (p)$ 
as in the Notation~5.16 of \cite{ASV23}.
Using Lemma~5.17(2) of \cite{ASV23} and  the Krein–Milman theorem, we get, for all $\tau \in \T^1(A)$, 
\begin{equation}\label{Eq1.2025.03.19}
d_\tau (p_\infty) = \tau (p_\infty)
=
(\tau \circ \Gamma_{\infty, n}) (p_n)= \frac{1}{|G|}. 
\end{equation}
 By (\ref{Eq1.2025.03.19}), the fact that $A$ is residually stably finite, and  Proposition~\ref{Prp.rhoequiavalent}(\ref{Prp.rhoequiavalent.c}), we have 
\begin{equation}\label{Eq2.2025.04.07}
\rho ([1_A], [p])= \rho_{[1_A]} ([1_A], [p]) =|G|
\
\mbox{ and }
\ 
\rho ([p], [1_A])=\rho_{[1_A]} ([1_A], [p])=\frac{1}{|G|}. 
\end{equation}
So
part (\ref{rceveryGroup.a}) follows. 

For  part (\ref{rceveryGroup.b}), we use (\ref{rceveryGroup.a}), 
 Theorem~\ref{CorRrc=rc},  Theorem~\ref{Rkrc}(\ref{Rkrc.b}) with $[p]$ in place of $x$ and $[1_A]$ in place of $y$,  (\ref{Eq.1_0.2025.04.07}), and (\ref{Eq2.2025.04.07})
  to get 
$
|G| \cdot \eta \leq \rc (\Cu (A), [p]) \leq |G| \cdot \eta.
$
This implies that  $\rc (\Cu (A), [p])=|G| \cdot \eta$. 
Using this and (\ref{Eq.1_0.2025.04.07}), Theorem~\ref{CorRrc=rc}, part (\ref{rceveryGroup.b}) follows. 

Part (\ref{rceveryGroup.c}) follows from Theorem~\ref{ThmManufactureRc} and the fact that
$\rc\big(\Cu (A\rtimes_\alpha G), [\iota  (1_A)] \big) =\rc \big(\Cu (A), [1_A]\big)$. 

\end{proof}

In Proposition~\ref{rceveryGroup}(\ref{rceveryGroup.c}), both the rang of 
 $\Irc_{\Cu (A)}$ and the range $\Irc_{\Cu(A\rtimes_\alpha G)}$
 are $(0, \infty]$ by Definition~\ref{Rciprical}.

As an application of the above proposition, we have the following theorem. 
\begin{thm}\label{NonIsoClassofC*}
For every nontrivial finite group $G$, 
there are 
an uncountable family of pairwise nonisomorphic 
unital simple separable AH algebras $\{A_{\gamma} 
 \}_{\gamma \in \Lambda}$, 
 and outter group actions 
$\{\alpha_{\gamma} \colon  G \to A_{\gamma} \}_{{\gamma \in \Lambda}}$,
where $\Lambda$ is an uncountable subset of $(0, \infty)$, 
such that all $A_{\gamma}$ have stable rank one and  the range of both 
$\rc \big( \Cu (A_{\gamma}), \cdot \big)$
and 
$\rc \Big( \Cu (A_{\gamma} \rtimes_{\gamma} G), \cdot \Big)$
are $[0, \infty)$ for all $\gamma \in \Lambda$.
\end{thm}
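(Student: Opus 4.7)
The plan is to obtain the family by invoking Proposition~\ref{rceveryGroup} over an uncountable range of parameters. Fix a nontrivial finite group $G$ and set $m = |G| \geq 2$. For each $\eta \in (0, \tfrac{1}{m})$, Proposition~\ref{rceveryGroup} (whose underlying construction, namely Construction~5.8 of \cite{ASV23}, accepts any finite group as input, hence applies to our fixed $G$) furnishes a simple unital separable AH algebra $A_\eta$ of stable rank one together with an outer action $\alpha_\eta \colon G \to \Aut(A_\eta)$ such that
\[
\rc\big(\Cu(A_\eta), [1_{A_\eta}]\big) \;=\; \rc\big(\Cu(A_\eta \rtimes_{\alpha_\eta} G), [\iota(1_{A_\eta})]\big) \;=\; \eta,
\]
and such that both of the functions $\rc(\Cu(A_\eta), \cdot)$ and $\rc(\Cu(A_\eta \rtimes_{\alpha_\eta} G), \cdot)$ have range $[0, \infty)$. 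I would then set $\Lambda := (0, \tfrac{1}{m})$, which is an uncountable subset of $(0, \infty)$, and declare $\{A_\eta, \alpha_\eta\}_{\eta \in \Lambda}$ to be the promised family.

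To verify pairwise nonisomorphism of $\{A_\eta\}_{\eta \in \Lambda}$, observe that $\rc(\Cu(A), [1_A])$ is a unital-isomorphism invariant of a unital C*-algebra $A$, and in our family this invariant is exactly the parameter $\eta$; distinct values of $\eta$ in $\Lambda$ therefore give nonisomorphic algebras. (The same injectivity, applied to the invariant $\rc(\Cu(A \rtimes_{\alpha} G), [\iota(1_A)])$, also distinguishes the crossed products from each other.) The final assertion, that both ranges $\rc(\Cu(A_\eta), \cdot)$ and $\rc(\Cu(A_\eta \rtimes_{\alpha_\eta} G), \cdot)$ equal $[0, \infty)$, is precisely Proposition~\ref{rceveryGroup}(\ref{rceveryGroup.c}).

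The real content, and hence the main obstacle, is buried in Proposition~\ref{rceveryGroup}(\ref{rceveryGroup.c}), which in turn relies on applying Theorem~\ref{ThmManufactureRc} to both $A_\eta$ and to $A_\eta \rtimes_{\alpha_\eta} G$. For $A_\eta$ itself, simplicity together with the Villadsen-type construction of \cite{ASV23} takes care of the hypotheses (separability, stable rank one, residual stable finiteness, no nonzero finite dimensional quotients). For the crossed product the same hypotheses must be verified separately: outerness of $\alpha_\eta$ on the simple algebra $A_\eta$ yields simplicity (and hence automatic residual stable finiteness once stable finiteness is known), while stable rank one of the crossed product and the absence of finite dimensional quotients are supplied by the structural results in \cite{ASV23}. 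Once these verifications are granted, the theorem is an immediate packaging of Proposition~\ref{rceveryGroup}.
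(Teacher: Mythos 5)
Your proposal is correct and follows essentially the same route as the paper: both take $\Lambda=(0,\tfrac1{|G|})$, apply Proposition~\ref{rceveryGroup} to each parameter value, distinguish the algebras (and their crossed products) by the isomorphism invariant $\rc(\Cu(\cdot),[1])$, and read off the surjectivity of the radius of comparison functions from Proposition~\ref{rceveryGroup}(\ref{rceveryGroup.c}). Your added remarks on where the real work lies (verifying the hypotheses of Theorem~\ref{ThmManufactureRc} for the crossed product) are a fair gloss but not part of the paper's short argument.
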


\begin{proof}
Let $G$ be a finite group with $|G|\geq 2$. 
Choose any pair of distinct real numbers $\mu, \nu \in (0, \tfrac{1}{|G|})$. By Proposition~\ref{rceveryGroup},  there are 
unital separable simple unital 
$A_{\mu}$ and $A_{\nu}$ with stable rank one and outer actions 
$\alpha_{\mu} \colon G \to \Aut (A_\mu)$
and $\alpha_{\nu} \colon G \to \Aut (A_\nu)$
such that:
\begin{enumerate}
\item
$\rc (\Cu (A_\mu), [1_{A_{\mu}}])
=
\rc (\Cu (A_\mu \rtimes G), [1_{A_{\mu}}])=\mu$.
\item
$\rc (\Cu (A_\nu), [1_{A_{\nu}}])
=
\rc (\Cu (A_\nu \rtimes G), [1_{A_{\nu}}])=\nu$.
\end{enumerate}
So it is clear that 
$A_\mu \not\cong A_\nu$ and 
$A_\mu \rtimes_{\alpha_\mu} G \not\cong A_\nu \rtimes_{\alpha_\nu} G$. Further, we know 
that 
\[
\rc \Big( \Cu (A_{\mu}), \ \Fu (\Cu (A_{\mu})) \Big)= 
\rc \Big( \Cu (A_{\mu} \rtimes_{\alpha_\mu} G), \ 
\Fu ( \Cu (A_{\mu} \rtimes_{\alpha_\mu} G))\Big) =[0, \infty),
\]  
and 
  \[
\rc \Big( \Cu (A_{\nu}), \ \big(\Fu (\Cu (A_{\nu})) \Big)= 
\rc \Big( \Cu (A_{\nu} \rtimes_{\alpha_\nu} G), \ 
\big(\Fu ( \Cu (A_{\nu} \rtimes_{\alpha_\nu} G))\Big) =[0, \infty).
\] 
This complete the proof. 
\end{proof}
In the above theorem, we can replace the group $G$ by the group $\mathbb{Z}^{d}$ for different  $d \in \N$, using the result of \cite{AHS25}.

Considering the above proposition, one may ask 
if the family of the C*-algebras $\{A_{\gamma} 
 \}_{\gamma \in \Lambda}$ can be arranged in a way which all of them  have the same Elliott invariant as well. 
 The following proposition answers this question by combing
  Theorem~\ref{ThmManufactureRc} and Theorem~3.11 of 
\cite{HP25}. 
\begin{prp}\label{IrcNonClassi}
There is an
uncountable family of pairwise nonisomorphic 
unital simple separable AH algebras
such that all have stable rank one, all 
possess the  same Elliott invariant, and  
the range of all the radius of comparison functions associated to their Cuntz semigroups is $[0, \infty)$. 
\end{prp}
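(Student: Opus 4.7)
The plan is to invoke Theorem~3.11 of \cite{HP25} to produce the family with a common Elliott invariant and distinct values of $\rc(\Cu(A_\gamma),[1_{A_\gamma}])$, and then upgrade each member of that family via Theorem~\ref{ThmManufactureRc} so that the whole interval $[0,\infty)$ is realized by $\rc(\Cu(A_\gamma),\cdot)$. Pairwise non-isomorphism is already built into the construction, since the radius of comparison relative to the unit is an isomorphism invariant and it separates the members of the family.

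More precisely, I would first apply Theorem~3.11 of \cite{HP25} to obtain an uncountable family $\{A_\gamma\}_{\gamma\in\Lambda}$, indexed by an uncountable subset $\Lambda\subseteq(0,\infty)$, of unital simple separable AH algebras of stable rank one, all sharing the same Elliott invariant, with $\rc(\Cu(A_\gamma),[1_{A_\gamma}])=\gamma$. Since each $A_\gamma$ is unital, simple, infinite-dimensional, and stably finite, it is residually stably finite and has no nonzero finite dimensional quotients. Moreover, $0<\gamma=\rc(\Cu(A_\gamma),[1_{A_\gamma}])<\infty$, so every hypothesis of Theorem~\ref{ThmManufactureRc} is verified.

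Applying Theorem~\ref{ThmManufactureRc} to each $A_\gamma$ separately then yields
\[
\rc\bigl(\Cu(A_\gamma),\Fu(\Cu(A_\gamma))\bigr)=[0,\infty),
\]
which is the last required property. Finally, if $\gamma\neq\gamma'$ then $\rc(\Cu(A_\gamma),[1_{A_\gamma}])\neq \rc(\Cu(A_{\gamma'}),[1_{A_{\gamma'}}])$, so $A_\gamma\not\cong A_{\gamma'}$; this gives the pairwise non-isomorphism.

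The essentially routine aspect is checking the hypotheses of Theorem~\ref{ThmManufactureRc}; the only potential obstacle is making sure that the family produced in \cite{HP25} really does have the properties (simple, AH, stable rank one, common Elliott invariant, and continuously varying finite radius of comparison relative to the unit) needed to feed into Theorem~\ref{ThmManufactureRc}. Provided Theorem~3.11 of \cite{HP25} is stated in the form used above, the argument is a direct splicing of the two results with no further analysis required.
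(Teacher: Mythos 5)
Your overall strategy --- splice Theorem~3.11 of \cite{HP25} with Theorem~\ref{ThmManufactureRc} --- is exactly the paper's, and your verification of the hypotheses of Theorem~\ref{ThmManufactureRc} (simplicity gives residual stable finiteness and rules out finite-dimensional quotients) is fine and indeed more careful than the paper's own proof. The problem is your reading of Theorem~3.11 of \cite{HP25}, which you yourself flag as the one point needing confirmation. That theorem does \emph{not} produce algebras with distinct radii of comparison $\rc(\Cu(A_\gamma),[1_{A_\gamma}])=\gamma$; it produces, for a \emph{single fixed} $r\in(0,\infty)$, an uncountable family of pairwise nonisomorphic simple separable unital AH algebras of stable rank one, all with the same Elliott invariant \emph{and} all with $\rc(\Cu(A_\gamma),[1_{A_\gamma}])=r$ (this is the content of the title of \cite{HP25}). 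Consequently your mechanism for pairwise nonisomorphism --- distinguishing the $A_\gamma$ by the value of the radius of comparison at the unit --- is unavailable; the nonisomorphism must simply be imported as part of the conclusion of Theorem~3.11 of \cite{HP25}, which is what the paper does.

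This is not merely a bookkeeping issue. If the family were separated by $\rc(\Cu(A_\gamma),[1_{A_\gamma}])$ as in your version, the resulting proposition would be much weaker in spirit: the algebras would already be distinguished by a single numerical invariant, and the point of the statement --- emphasized in the remark following it, namely that the Elliott invariant together with surjectivity of the radius of comparison function still fails to classify --- would be lost. So you should fix the proof by quoting Theorem~3.11 of \cite{HP25} with a fixed common value $r$, taking pairwise nonisomorphism directly from that theorem, and then applying Theorem~\ref{ThmManufactureRc} to each $A_\gamma$ exactly as you do in your third paragraph.
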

\begin{proof}
Let $r \in (0, \infty)$. We use Theorem~3.11 of 
\cite{HP25} to choose an uncountable family of simple separable unital AH algebra $\{ A_{\gamma} \colon \gamma \in I \}$, 
 all having stable rank one, all possessing  the same Elliott invariant, and
$\rc (\Cu (A_\gamma), [1_{A_\gamma}])=r$ for all $\gamma \in I$. 
Using  Theorem~\ref{ThmManufactureRc}, we get
$\rc \left( 
\Cu (A_\gamma), \ \Fu (\Cu(A_\gamma)  \right) =[0, \infty)$ all $\gamma \in I$. 
\end{proof}
We refer to \cite{Vil99} for 
Villadsen algebras and to 
Theorem~6.1 of \cite{ELN24} the classification of a certain type
Villadsen algebras by both the Elliott invariant and the radius of comparison.
The above proposition, which is strengthened version of Theorem~3.11 of \cite{HP25}, in fact shows that the combination of the Elliott invariant and the surjective 
of the radius of comparison function cannot classify simple AH algebras in general.
%
\section{Examples}\label{SectionExam}
In this section, we present a variety of examples that satisfy our theorems and  highlight some important and meaningful applications of theorems. 

Here is an example that
the value of 
the rank ratio function at $(y, x)$
 is not equal to the reciprocal of its value at $(x, y)$ for $x, y \in \Fu (S)$ with $x \leq y$.
\begin{exa}\label{Exa.RecipricalValue}
Let $A$ be a unital residually stably finite exact C*-algebra.
Let $B= M_2 (A) \oplus M_2 (A)$.
Set
 \[
 a = \left( \begin{pmatrix}
1 & 0\\
0 & 0
\end{pmatrix},
1_{M_2 (A)}
\right)
\quad
\mbox{and}
\quad
b=
 \left(1_{M_2 (A)},
1_{M_2 (A)}
\right).
\]
For, any $\tau \in \T^1(B)$, we have
$d_{\tau} ([b])=1$ and 
\[
d_{\tau} ([a]) =  \tau (a)
= t \cdot \tau_1 \left( \begin{pmatrix}
1 & 0\\
0 & 0
\end{pmatrix}\right) + 
(1-t) \cdot \tau_2 (1_{M_2 (A)})= 1- \frac{t}{2},
\]
where $t\in [0, 1]$ and $\tau_1, \tau_2 \in \T^1(M_2 (A))$. 
Using this and Proposition~\ref{Prp.rhoequiavalent}(\ref{Prp.rhoequiavalent.a})
\[
\rho ([a], [b]) = 
\sup_{\tau \in \T^1(A)} \left( \frac{d_{\tau} ([a])}{d_{\tau} ([b])} \right)=
 1
\quad
\mbox{and}
\quad
\rho ( [b], [a]) = \sup_{\tau \in \T^1(A) } \left( \frac{d_{\tau} ([b])}{d_{\tau} ([a])} \right)=2.
\]
\end{exa}
Here is an example that both the rank ratio function at any given pair of full elements and  the relative radii of comparison associated to them are zero. 
\begin{exa}\label{Exa.PurelyInfinite}
Let $A$ be a  purely infinite C*-algebra
and let   $a, b \in A\otimes \cK$ be full.
Then:
\begin{enumerate}
\item\label{Exa.PurelyInfinite.a}
It follows from Proposition~3.5 of \cite{KR02} that $A \otimes \cK$ is  also purely infinite. 
\item\label{Exa.PurelyInfinite.b}
By Proposition 5.1 of
\cite{KR00}, we have $\cF_{[a]} (\Cu (A))=\cF_{[b]} (\Cu (A))=\emptyset$. Then,
 by  Lemma~\ref{Rkiszero}, we have 
  $\rho\big([b], [a]\big)=\rho\big([a], [b]\big)=0$.
  \item\label{Exa.PurelyInfinite.c}
  It follows from Lemma~\ref{RcPurelyInfinite}  that 
  $
  \rc (\Cu (A), [a])= \rc (\Cu (A), [b])=0$.
  \item\label{Exa.PurelyInfinite.d}
  By (\ref{Exa.PurelyInfinite.c}) and Definition~\ref{Rciprical}, the range of 
  $\rc (\Cu(A), \cdot )$ is $\{0\}$ and the range of
  $\Irc_{\Cu (A)}$ is $\{\infty\}$. 
  \end{enumerate}
\end{exa}
Here we give an example that the rank ratio function is equal to one. 
\begin{exa}\label{rc.ofCX}
Let $X$ be a compact metric space and let 
$a \in \Fu (C(X))$ be positive. 
Set $\dt = \inf_{x\in X} a(x)$. 
\begin{enumerate}
\item
Since $a$ is full, it follows from Lemma~\ref{FullElements_Lem} that 
$a(x)>0$ for all $x \in X$. So $\dt>0$.
\item
Choose $n \in \N$ such that $\frac{1}{n} < \dt$. So, for every $m \geq n$, we have 
$(a-\tfrac{1}{m})_+ \sim a \sim 1_A$. 
Using this and Proposition~(\ref{Prp.rhoequiavalent})(\ref{Prp.rhoequiavalent.c}), we get $\lim_{m \to \infty} \rho ([a], [(a-\tfrac{1}{m})_+]) =1$. 
\end{enumerate}
\end{exa}

Here is an example where $\rho (y,x)$ is $0$ or
$\infty$ 
for $x, y \in \Fu(S)$. 
\begin{exa}\label{rankration0Infty}
Let $S$ be a $\Cu$-semigroup and let $x\in S\setminus \{0\}$.
We have
\[
\lambda (\infty \cdot x) =
\begin{cases} 
0, & \text{if } \lambda(x) = 0, \\
\infty, & \text{if } \lambda (x) \geq 0.
\end{cases}
\]
\begin{enumerate}
\item
For every $r \in (0, \infty)$, we have 
$\lambda (x) \leq r \cdot \lambda (\infty \cdot x)$. So, $\rho(x, \infty \cdot x)=0$. 
\item
We also have
$
\rho( \infty \cdot x, x)=
\begin{cases}
        0, & \text{if }  \cF_x (S) = \emptyset\\
        \infty, & \text{if }\cF_x (S) \neq \emptyset.
    \end{cases}
    $
\end{enumerate}
\end{exa}

Here is an example satisfying Theorem~\ref{LimofRank}(\ref{LimofRank.b}). 
\begin{exa}\label{ExprhoofNA}
Let $S$ be a $\Cu$-semigroup and let $a\in \Full(A)$. 
 It follows from Theorem~\ref{LimofRank}(\ref{LimofRank.b})
that 
\[
\rho (\infty \cdot a, a) \geq \rho (\infty \cdot a, 2 \cdot a) \geq \ldots \geq \rho (\infty \cdot a, n \cdot a) \geq \rho (\infty \cdot a, (n+1) \cdot a) \geq \ldots \geq \rho (\infty \cdot a, \infty \cdot a).
\]
Using Lemma~\ref{IncDesrho}(\ref{IncDesrho.a}), we get 
\[
\rho (\infty \cdot a, a) \geq \frac{1}{2} \cdot \rho (\infty \cdot a, a) \geq \ldots \geq  \frac{1}{n} \cdot \rho (\infty \cdot a, a) \geq  \frac{1}{n+1} \cdot \rho (\infty \cdot a, a) \geq \ldots \geq \rho (\infty \cdot a, \infty \cdot a).
\]
\begin{enumerate}
\item\label{ExprhoofNA.a}
If $\cF_a(S) \neq \emptyset$,
then, for every $n \in \N$, we have
\[
(\rho(\infty \cdot a, na))_{n=1}^{\infty}=\infty
\quad
\mbox{ and } 
\quad
\rho(\infty \cdot a, \infty \cdot a )=0.
\]
\item\label{ExprhoofNA.b}
If $\cF_a(S) =\emptyset$,
then, for every $n \in \N$, we have
\[
\rho(\infty \cdot a, n\cdot a))_{n=1}^{\infty}=
\rho(\infty \cdot a, \infty \cdot a )=0.
\]
\end{enumerate}
\end{exa}
Here is an example of nonunital C*-algebra satisfying Theorem~\ref{Rkrc}. 
\begin{exa}\label{nonunitalrcrankratio}
Let $A$ be a (not necessarily unital) C*-algebra.  Suppose that there exists $a \in A_+$ and $\ep>0$ such that
 $(a-\ep)_+$ is full. This can occurs for example when  the  primitive ideal space of $A$ is compact (see Proposition~3.1 of  \cite{TT15}).
 Then:
\begin{enumerate}
\item 
It follows from Proposition~\ref{rcProperties}(\ref{rcProperties.a_1})
that $\rc (\Cu (A), [a]) \leq \rc (\Cu (A), [(a-\ep)])$. 
\item
If $\cF_{[a]} (\Cu(A)) \neq \emptyset$ and 
$\rho([a], [(a- \ep)_+])< \infty$, then, by Theorem~\ref{Rkrc}(\ref{Rkrc.b}), 
\begin{align*}
\frac{1}{\rho([(a-\ep)_+], [a])} \cdot 
\rc (\Cu(A), [a]) & \leq \rc (\Cu (A), [(a- \ep)_+]) 
\\& \leq 
\rho([a], [(a- \ep)_+]) \cdot \rc (\Cu(A), [a]).
\end{align*}
\end{enumerate}
\end{exa}
Here is an example of unital C*-algebra satisfying Theorem~\ref{Rkrc}. 
\begin{exa}\label{RCPand1}
Let $A$ be a unital residually stably finite C*-algebra and let $p \in \cup_{n=1}^{\infty} M_n (A)_+$ be a  full projection.
Then:
\begin{enumerate}
\item\label{RCPand1.a}
It follows from Theorem~\ref{CorRrc=rc}
that
$\Rrc(A, a) = \rc (\Cu (A), [a])$. 
\item\label{RCPand1.b}
Using Proposition~\ref{Prp.rhoequiavalent}(\ref{Prp.rhoequiavalent.c}), we get 
\[
\rho ([1_A], [p])=  \frac{1}{\inf_{\tau \in \QT^1_2(A)}\tau(p)}
\ 
\mbox{ and }
\ 
\rho ([p], [1_A])= \sup_{\tau \in \QT^1_2(A)}\tau(p).
\]
\item\label{RCPand1.c}
Set 
\[
\kappa=
\frac{1}{\sup_{\tau \in \QT_2^1(A)}(\tau(p))}
\quad
\mbox{ and }
\quad
\xi= \frac{1}{\inf_{\tau \in \QT_2^1(A)}(\tau(p))}.
\]
It follows from Lemma~\ref{RkRatioProperty}(\ref{RkRatioProperty.f}), Theorem~\ref{Rkrc}(\ref{Rkrc.b}), and (\ref{RCPand1.a}) that
\[
\frac{1}{\kappa} \cdot  \rc(\Cu (A), [1_A])
\leq
 \rc (\Cu (A), [p]) 
 \leq 
 \frac{1}{\xi} \cdot
 \rc(\Cu(A), [1_A]).
\]
\end{enumerate}
\end{exa}
%
%
The following example provide two things. 
The fist one is providing a sequence $(b_m)_{m \in \N}$ of full elements  such that 
the radius of comparison relative to each $b_m$ exists and is finite, but 
the radius of comparison relative to $\lim_{m \to \infty} b_m$ is not even well-defined. 
The second part is providing 
an example 
satisfying 
Theorem~\ref{MainThmForRCLim}. 
\begin{exa}\label{EXlimiISZero}
Let $X$ be a compact metric space with $\dim X = 2k+1$ for $k \in \Ntwo$ and let $n \in \Ntwo$. 
Let $A=C(X, M_n)$ and 
let 
$(e_{j,k})^n_{j,k=1}$ denote the standard system of matrix units in
$M_n(\mathbb{C})$. 
For all $m \in \N$, define $a, a_m, b_m \colon X \to M_n$  by
\[
a_m (x)= 
e_{n, n} \otimes 1+ \sum_{j=1}^{n-1} e_{j,j} \otimes \frac{1}{m}
, \quad
a (x)= 
e_{n, n} \otimes 1, 
\quad
\mbox{and}
\quad
b_m (x)=
 \sum_{j=1}^{n} e_{j,j} \otimes \frac{1}{m}. 
\]
Then:
\begin{enumerate}
\item \label{EXlimiISZero.a}
$\lim_{m \to \infty} a_m = e_{n, n} \otimes 1$
and 
$\lim_{m \to \infty} b_m= 0_{n \times n}$ 
\item\label{EXlimiISZero.b}
It is obvious that $b_m \sim 1_{A}$. Using 
Proposition~\ref{rcProperties}(\ref{rcProperties.b}) at the first step, 
Proposition~\ref{rcProperties}(\ref{rcProperties.d})
at the second step, 
Theorem~\ref{CorRrc=rc} and  the fact that $A$ is residually stably finite at the third step, and 
Corollary~1.2(1) of \cite{EN13} at the last step,
 we get, for all $m \in \N$, 
 \begin{align*}
 \rc (\Cu(A), [b_m])
 = 
 \rc (\Cu (C(X)), [1_A]) 
 &=
\frac{1}{n} \cdot \rc (\Cu (C(X)), [1_{C(X)}]) 
\\&= 
\frac{1}{n} \cdot
\Rrc (C(X), 1_{C(X)})= \frac{k-1}{n}.
\end{align*}
However, $\rc (\Cu (A), \lim_{m \to \infty} b_m)$ is not even well-defined as 
$\lim_{m \to \infty} b_m$ is not  full by (\ref{EXlimiISZero.a}). 
\item\label{EXlimiISZero.c}
Using Proposition~\ref{Prp.rhoequiavalent}(\ref{Prp.rhoequiavalent.c}), we get 
$\rho([a], [a_m])= \frac{1}{n}$
and 
$\rho([a_m], [a])= n$. 
\item 
Using (\ref{EXlimiISZero.c}) at the first step, using Proposition~\ref{rcProperties}(\ref{rcProperties.b}) at the second  step, and
using $[a]= [1_{C(X)}]$ and Proposition~(\ref{rcProperties})(\ref{rcProperties.b}) at the third step, we get
\begin{align*}
 \rho([a_m], [a]) \cdot \rc \left(\Cu (A), [a_m] \right)
 &=
 n \cdot \rc \left(\Cu (A), n \cdot [1_{C(X)}] \right) 
\\ &=
\rc (\Cu (A), [1_{C(X)}] ) 
\\&= \rc (\Cu (C(X)), [a]).
\end{align*}
\end{enumerate}
The set of all Radon Borel probability measures on $X$ is denoted by $\cM_1 (X)$. 
Here is an example satisfying Theorem~\ref{MainThmForRCLim} and we also
 concretely compute the rank ratio function at a given point.  This gives us a better understanding of the rank ratio function 
 in terms of measure theory. 
\begin{exa}\label{RankratioMeasureTheo}
Let $n, t \in \Ntwo$ and let $X=[0, \tfrac{1}{n}]^t$. Let $A=M_n(C(X))$ and let 
 $f \in C(X)_+$ be given by
$f((x_1, x_2, \ldots, x_t))= \sum_{j=1}^{t} x_j$. 
Let 
$(e_{j,k})^n_{j,k=1}$ denote the standard system of matrix units in
$M_n(\mathbb{C})$. 
For each $m \in \N$, set the elements $a, a_m \in M_n \otimes C(X)_+$ by
\[
a_m= e_{1, 1} \otimes \left(f-\tfrac{1}{m}\right)_+  + \sum_{j=2}^{n} e_{j,j} \otimes 1_{C(X)}
\quad
\mbox{and}
\quad
a=
e_{1, 1} \otimes f + \sum_{j=2}^{n} e_{j,j} \otimes 1_{C(X)}.
\]
\begin{enumerate}
\item\label{RankratioMeasureTheo.a}
By Lemma~\ref{FullElements_Lem}, it is clear that $a$ and $a_m$ for all $m \in \N$ are full. 
\item\label{RankratioMeasureTheo.b}
Let $\tau \in \T^1(A)$. So, by Riesz Representation Theorem, 
there exists $\mu \in \cM_1 (X)$ such that 
\[
\tau \left((g_{j, k})_{j, k=1}^n \right) =
\frac{1}{n} \cdot \sum_{k=1}^{n} \int_{X} g_{k, k} (x) \ \mathrm{d} \mu(x),
\]
where $g_{j, k} \in C(X)$ for $1 \leq j, k\leq n$.
For $m \in \Ntwo$, 
set $Y_m= \{ x \in X \colon f(x) \leq \frac{1}{m}\}$. For every $m \in \Ntwo$, we get, using Monotone Convergence Theorem at the second step, 
\begin{align}\label{Eq1.2025.04.10}
d_{\tau} (a_m) = \lim_{s \to \infty} \tau (a_m^{1/s}) &=
\frac{n-1}{n} + \frac{1}{n} \cdot 
\int_{X \setminus Y}  \lim_{s \to \infty} \left((f- \tfrac{1}{m})_+ (x)\right)^{1/s} \ \mathrm{d} \mu(x)
\\\notag&=
\frac{n-1}{n}  + \frac{1}{n} \cdot \mu (X \setminus Y)= 1- \frac{1}{n}  \cdot \mu (Y).
\end{align}
Similarly, we have
\begin{equation}\label{Eq2.2025.04.10}
d_{\tau} (a) =  1- \frac{1}{n} \cdot \mu (\{0\}).
\end{equation}
\item\label{RankratioMeasureTheo.c}
Using Proposition~\ref{Prp.rhoequiavalent}(\ref{Prp.rhoequiavalent.c}) at the first step and considering the
the Dirac measure at the point $(\tfrac{1}{tm}, \ldots, \tfrac{1}{tm} ) \in X$ at the last step, we get 
\[
\rho([a], [a_m]) =\sup_{\tau \in \T^1(A)} \left( \frac{d_{\tau} (a)}{d_{\tau} (a_m)} \right)
\overset{\text{(\ref{Eq1.2025.04.10})}}{\underset{\text{(\ref{Eq2.2025.04.10})}}{=}}
\sup_{\mu \in \cM_1 (X)} \left( \frac{ 1- \frac{1}{n} \cdot \mu (\{0\})}{1- \frac{1}{n}  \cdot \mu (Y)} \right) = \frac{n}{n-1}.
\]
Using Proposition~(\ref{Prp.rhoequiavalent})(\ref{Prp.rhoequiavalent.c}) at the first step and considering the
the Dirac measure at the point $(\tfrac{1}{m}, \ldots, \tfrac{1}{m} ) \in X$ at the last step, we get 
\[
\rho([a_m], [a]) =\sup_{\tau \in \T^1(A)} \left( \frac{d_{\tau} (a_m)}{d_{\tau} (a)} \right) 
\overset{\text{(\ref{Eq1.2025.04.10})}}{\underset{\text{(\ref{Eq2.2025.04.10})}}{=}}
\sup_{\mu \in \cM_1 (X)} \left( \frac{1- \frac{1}{n}  \cdot \mu (Y)}{ 1- \frac{1}{n} \cdot \mu (\{0\})} \right) = 1.
\]
\item\label{RankratioMeasureTheo.d}
By Theorem~\ref{MainThmForRCLim}, we have
\[
\rc (\Cu(A), [a]) \leq \lim_{m \to \infty} \rc (\Cu (A), [a_m]) \leq 
\frac{n}{n-1} \cdot \rc (\Cu(A), [a]).
\]
The value of $\rc (\Cu(A), [a])$ depends on $t$, i.e., the covering dimension of $X$.
\end{enumerate}
\end{exa}
\end{exa}

The following example is an algebraic satisfying Proposition~\ref{ExtremeInfiniteRho}. 
\begin{exa}\label{AlgebraciExample}
Let $S$ be a $\Cu$-semigroup. Let $x \in \Full(S)$. Let $\cF_x (S) \neq \emptyset$. Then:
\begin{enumerate}
\item
By Example~\ref{ExprhoofNA}(\ref{ExprhoofNA.a}), we have 
$\rho(\infty \cdot x, n \cdot x)= \infty$ for all $n\in \N$ and
$\rho(\infty \cdot x, \infty \cdot x)= 0$
\item
By Proposition~\ref{rcProperties}(\ref{rcProperties.a_1}) that 
\[
 \rc (S, x) \geq \rc (S, 2\cdot x) \geq 
 \ldots \geq \rc (S, n\cdot x) \geq \rc (S, (n+1)\cdot x)
 \ldots \geq \rc (S, \infty \cdot x).
 \]
 Then, by Proposition~\ref{rcProperties}(\ref{rcProperties.b}), we have 
 \[
 \rc (S, x) \geq \frac{1}{2} \cdot \rc (S, x) \geq 
 \ldots \geq \frac{1}{n} \cdot  \rc (S, x) \geq \frac{1}{n+1} \cdot \rc (S, x) \geq 
 \ldots \geq \rc (S, \infty \cdot x).
 \]
Assume that $\rc(S, x) <\infty$. Therefore, using Proposition~\ref{rcProperties}(\ref{rcProperties.c}) at the last step,  
\[
\lim_{n \to \infty} \rc(S, n.x) = 
\lim_{n \to \infty} \frac{1}{n} \cdot \rc(S, x) = 0=\rc(S, \infty \cdot x).
\] 
\end{enumerate}
\end{exa}

\end{document}